\newcommand{\E}{\mathbf{E}}
\def\P{\mathbf{P}}
\newcommand{\Poi}{\mbox{\rm Poi}}
\newcommand{\EE}[1]{\E\l[#1\r]}
\newcommand{\X}{\mathcal{X}}
\newcommand{\mat}[4]{\l[\begin{array}{cc}  #1 & #2 \\ #3 & #4  \end{array} \r]}
\def\summ{\sum\limits}
\def\intt{\int\limits}
\def\prodd{\prod\limits}
\def\to{\rightarrow}
\def\sgn{{\mb{sgn}}}
\newcommand\Tr{{\mbox{Tr}}}
\def\mb{\mbox}
\newcommand{\para}[1]{\vspace{6mm}\noindent{\bfseries #1:}}
\newcommand{\parag}[1]{\vspace{5mm}\noindent{\bfseries #1}}
\def\l{\left}
\def\r{\right}
\def\<{\langle}
\def\>{\rangle}
\newcommand{\ba}{\[\begin{aligned}}
\newcommand{\ea}{\end{aligned}\]}
\newcommand\mnote[1]{} 
\newcommand{\beq}[1]{\begin{equation}\label{#1}}
\newcommand\eeq{\end{equation}}
\newcommand\ben{\begin{equation}}
\newcommand\een{\end{equation}}
\newcommand\bes{\begin{eqnarray*}}
\newcommand\ees{\end{eqnarray*}}
\newcommand\besn{\begin{eqnarray}}
\newcommand\eesn{\end{eqnarray}}
\def\bthm{\begin{theorem}}
\def\ethm{\end{theorem}}
\def\bdefn{\begin{definition}}
\def\edefn{\end{definition}}
\newcommand{\benu}{\begin{enumerate}\setlength\itemsep{6pt}}
\newcommand{\beit}{\begin{itemize}\setlength\itemsep{3pt}}
\def\eenu{\end{enumerate}}
\def\eeit{\end{itemize}}
\def\beds{\begin{description}}
\def\eeds{\end{description}}
\def\bepr{\begin{problem}}
\def\eepr{\end{problem}}
\def\bprf{\begin{proof}}
\def\eprf{\end{proof}}
\def\berk{\begin{remark}}
\def\eerk{\end{remark}}
\def\bex{\begin{exercise}}
\def\eex{\end{exercise}}
\def\beg{\begin{example}}
\def\eeg{\end{example}}
\def\AA{{\mathcal A}}
\def\BB{{\mathcal B}}
\def\FF{{\mathcal F}}
\def\TT{{\mathcal T}}
\def\XX{{\mathcal X}}
\def\YY{{\mathcal Y}}
\def\C{\mathbb{C}}
\def\N{\mathbb{N}}
\def\R{\mathbb{R}}
\newcommand{\sm}{{\raise0.3ex\hbox{$\scriptstyle \setminus$}}}
\def\del{\delta}
\def\eps{\epsilon}
\renewcommand\phi{\varphi}
\def\lam{\lambda}
\theoremstyle{plain} 
    \newtheorem{theorem}{Theorem}
    \newtheorem{lemma}[theorem]{Lemma}
    \newtheorem{proposition}[theorem]{Proposition}
    \newtheorem{corollary}[theorem]{Corollary}
    \newtheorem{claim}[theorem]{Claim}
\theoremstyle{definition} 
    \newtheorem{definition}[theorem]{Definition}
    \newtheorem{exercise}[theorem]{Exercise}
    \newtheorem{problem}[theorem]{Problem}
        \newtheorem{remark}[theorem]{Remark}
    \newtheorem{example}[theorem]{Example}
\title{Determinantal processes and stochastic domination}   
\author{Raghavendra Tripathi}             
\begin{document}

\baselineskip=18pt plus1pt

\setcounter{secnumdepth}{3}
\setcounter{tocdepth}{3}

\maketitle                  
\null\newpage
\begin{declaration}
	I hereby declare that the thesis entitled `Determinantal processes and stochastic domination' submitted by me for the award of M.Sc. degree of the Indian Institute of Science did not form the subject matter for any other thesis submitted by me for any degree or diploma.
	\begin{flushright}
		Raghavendra Tripathi\\
		SR No:10-06-00-10-31-16-1-13660
	\end{flushright}
\end{declaration}
\null\newpage
\begin{acknowledgements}

Except the mistakes everything else in this work I owe to many people. I am glad to have an opportunity to express my gratitude to those whose help and support I received during this work. 

First of all, I would like to thank my adviser Prof. Manjunath Krishnapur for his insightful comments, discussions and array of questions during this project. It was his course in Random matrix theory which made me interested in probability, and since then he has continuously helped me navigate my way through vast territory of probability theory. He has very kindly and patiently entertained all my questions and doubts. 

I learnt great deal of mathematics during my coursework at IISc, and it served to fill many gaps which I would have hardly been able to do on my own. I thank all my instructors for their wonderful courses and their patience to deal with my doubts while I was embarking on a journey to the beautiful world of mathematics. 

I am also grateful to my fellow students at IISc for many exciting discussions. I am particularly thankful to Abhay Jindal and Shubham Rastogi for being earnest proof readers and pointing out my mistakes-- which I make with very high probability. I thank Poornendu Singh and Mayuresh Londhe for their late night discussions and tea-- both of which has been equally essential to me. I also thank Mayuresh Londhe for directing me to many new frontiers of mathematics, and the discussions from which I learnt a lot of mathematics which I would have otherwise not known. I would also like to thank my friends in other departments at IISc for their great company. In particular, I must thank Debashree Behera for helping me with innumerable things and keeping me concentrated on my work. I must also thank Prakriti canteen--which has been a quasi-permanent place for my mathematical discussions with friends. A major part of this work was written sitting in Prakriti. 

While I learnt rigorous mathematics after coming to IISc, I must thank Dr. Mukund Madhav Mishra at Delhi University for his excellent teaching and inspiring me to pursuing mathematics. Equally important were the courses offered by Prof. S. Bagai, Dr. Umesha Kumar, Dr. Yuthika Gadhyan, Dr. Sulbha Arora and many others.

Undeniably I owe a lot to my teachers at school and college, my friends and my parents for shaping me as a human being. I can not express my gratitude towards them in mere words. My parents always stood by my side while I was making an excursion from physics to mathematics via engineering, and encouraged to pursue the mathematics. It would have been impossible to come this far without their support. I thank my uncle who was my first math teacher after high school. I also thank my elder sister who helped me continue my education at a time when I felt that I won't be able to. Needless to say that I am thankful to my friends who have always kept complaining, (for not returning their calls) but who never quit. I hope that they will be happy to see my thesis--even if they do not understand.

I have received support from numerous other people, and it would be impossible to name everyone here. But, I thank everyone who has helped and supported me in any direct or indirect ways.

\end{acknowledgements}   
\begin{abstract}
In this thesis we explore the stochastic domination in determinantal processes. Lyons (2003) showed that if $K_1\le K_2$ are two finite rank projection kernels and $P_1, P_2$ are determinantal measures associated with them, then $P_2$ stochastically dominates $P_1$, written $P_1 \prec P_2,$ that is for every increasing event $\mathcal{A}$ we have $P_1(\mathcal{A})\le P_2(\mathcal{A})$.  We give a simpler proof of Lyons’ result which avoids the machinery of exterior algebra used in the original proof of Lyons and also provides a unified approach of proving the result in discrete as well as continuous case.

R. Basu and S. Ganguly (2019) proved the stochastic domination between the largest eigenvalue of Wishart matrix ensemble $W(n, n)$ and $W(n-1, n+1)$ invoking Lyons' theorem. It is well known that the largest eigenvalue of Wishart ensemble $W(m, n)$ has the same distribution as the directed last-passage time $G(m, n)$ on $\mathbb{Z}^2$ with i.i.d. exponential weights. Thus, Basu and Ganguly obtain the stochastic domination between $G(m, n)$ and $G(m-1, n+1).$

It is also known that the largest eigenvalue of the Meixner ensemble $M(m, n)$ has the same distribution as the directed last passage time $G(m, n)$ on $\mathbb{Z}^2$ with i.i.d. geometric weights. We prove another stochastic domination result which combined with the Lyons’ theorem gives the stochastic domination between the largest eigenvalues of Meixner ensemble $M(n, n)$ and $M(n-1, n+1),$ which in turn proves that the directed last passage time (with i.i.d. geometric weights) $G(n, n)$ stochastically dominates $G(n-1, n+1).$
\end{abstract}
\afterpage{\null\newpage}          

\begin{romanpages}          
\tableofcontents            
\end{romanpages}            
\null\newpage
\chapter{Point processes}
This chapter aims to provide the background for the upcoming chapters. The primary object of study in this thesis is a determinantal point process and stochatic domination for a special type of determinantal process. Before we specialize to the main theme of the thesis, we will introduce a general point process. There are different possible approaches to introduce the point processes, some of which are specially suitable for specific kind of point processes. The two common approaches to the theory of point process is $a)$ through random sequence of points, and $b)$ through the theory of random measures. In this chapter we briefly describe the two approaches. 

In order to give a complete background for the upcoming chapters we will also describe the notion on stochastic domination and coupling in this chapter.

\section{Definitions and Examples}
 Roughly speaking, a point process is a probability measure on the space of locally finite configurations in some locally compact Polish space. Much of the theory of the point process is inspired from physics and inadvertently a lot of terminology has been borrowed from physics. The points in a configuration are also referred to as particles. Before we give a rigorous definition of a point process, let us look into some simple examples to get an intuition.
\beg
\label{twoplayer}
Let $\X$ be a subset of $\N$ which contains every natural number with probability $p$ independently. $\X$ is a random subset of $\N$. This is an example of a point process.
\eeg
The above example is of course too simplistic but it contains the key idea that a point-process is simply a random subset of some set. Another simple example of a point process is given below.
\beg 
\label{eigenv3}
Consider a $3\times 3$ matrix with each entry is independently distributed according to a Bernoulli $p$ distribution. And let $\X$ be the set of eigenvalues of such a matrix. It is clear that $\X$ is a random subset of $\C$, and is an example of a point process.
\eeg
Note that there are only $2^9$ possible matrices in the above example. Using a computer one can explicitly write down all possible values $\X$ takes, with their exact probabilities. Also note that there is nothing special about $3$, or about the Bernoulli random variables. One can in general start with any random matrix ensemble and the set of eigenvalues will give a point process on $\C.$ We will talk more about such processes later.

With the above two examples we are now prepared to make a definition for the point process. As we have already remarked a (simple) point process on a set $S$ is a random subset of $S.$ Throughout this chapter, we assume $S$ is a locally compact, complete separable metric space (Polish space) equipped with the Borel $\sigma$-algebra. We start by identifying a random set with a random (Radon) measure on the Borel $\sigma$-algebra of $S$. Note that given a locally finite subset $A$ of $S$, we can associate a measure $\mu_A$ on $S$ defined by $\mu_A=\summ_{a\in A}\delta_a.$ The locally finite assumption on $A$ guarantees that $\mu_A$ is a Radon measure. On the other hand, if we have a Radon measure $\eta$ which only takes non-negative integer values (or possibly infinity), then one can similarly associate it with a locally finite configuration (i.e. a multiset) on $S$. This allows us to see point process as a `random variable' taking values in the space of Radon measures on $S.$ To make this into a formal definition, we shall always take $S$ to be a locally compact Polish space with a reference Radon measure $\mu.$  Denote by $\mathcal{M}(S)$, the collection of Radon measures on the Borel $\sigma$-algebra of $S$ which takes values in $\N \cup \{0, \infty\}.$ Equip the collection $\mathcal{M}(S)$ with the vague topology (the topology which $\mathcal{M}(S)$ inherits as the subspace of $C_0(S)^*$), that is, $\mu_n\to \mu$ in $\mathcal{M}(S)$ if $\int fd\mu_n \to \int fd\mu$ for every $f\in C_0(S).$

It is well known that $\mathcal{M}(S)$ is a complete separable metric space. This identification allows us to define a point process as a random variable on $(S,\mu)$ taking value in $\mathcal{M}(S).$ 

\definition[Point Process]{A point process $\X$ on $(S,\mu)$ is a random finite non-negative integer valued  Radon measure on $S.$ It is called a simple point process if $\X(\{s\})\le 1$ for every $s\in S,$ almost surely.}

It is instructive to think of a simple point process as a random discrete subset of $S.$ It should be pointed out that by the definition of the simple point process, $\X(D)$ is the random variable which counts the number of points (or particles) in the set $D$, for any Borel subset $D\subset S$. The measurability of $\X$ turns out to be equivalent to the measurability of random variables $\X(D)$ for every Borel subset $D\subset S.$

Let us explore a few more examples to understand these point processes better.

\beg[Discrete Poisson process]
Let $S$ be a finite or countable set with a Radon measure $\mu.$ And let $X$ be random multiset of $S$ where the multiplicity of each $x\in \X$ is an independent Poisson with intensity $\mu\{x\}.$ Equivalently $\X$ is random measure defined as $\sum\limits_{x\in S}P_x\delta_{x},$ where $P_x, x\in S,$ are independent random variables and $P_x\sim \mb{Pois}(\mu\{x\}).$
\eeg

The above example also affords us an example of non-simple point process. We do have a continuous analogue of the above process which we record below with a caution that the existence of a process with the properties described below is not at all immediate. We refer the interested reader to \cite{Jones}.

\beg[General Poisson process] 
Let $S$ be a locally compact Polish space with a Radon measure $\mu.$ Let $\X$ be the process such that for any $A\subset S$ of finite measure, the number of points in $\X(A)$ is distributed by Poisson random variable $P_A$ with intensity $\mu(A)\le \infty.$ And for any collection of disjoint subsets $A_1, A_2,\dots, A_k$ of finite measure the collection of random variables $\{P_{A_i}:1\le i\le k\}$ is independent.
\eeg

%
%

We now turn towards the question of describing a point process. Inspired by the general theory of stochastic processes, one would imagine that the natural way to describe a point process would be by describing the probabilities of its cylinder sets i.e. by specifying the $\Pr[\X(B_i)=k_i, 1\le i\le m]$ for all $m\ge 1$ and Borel subsets $B_i\subset S$. Of course, in order to define a point process the assignment of probabilities to the cylinder sets must be consistent meaning that 
$$\sum\limits_{0\le k_{m+1}\le \infty}\Pr[\X(B_i)=k_i, 1\le i\le m+1]=\Pr[\X(B_i)=k_i, 1\le i\le m].$$

This indeed is useful and very much in the spirit of general theory of stochastic processes. But this is not the most preferred or the most amenable way to describe a point process. The distribution of a point process is most often described by its \emph{joint intensities/correlation functions}. Of course, there are other ways to describe a point process but we will not get into details here. We also caution the reader the joint intensities do not always exist and even when they do, they need not completely determine a point process, but for all our purposes specifying the joint intensities would be enough. For a short but beautiful discussion of joint intensities we suggest the reader to look into Chapter 1 of \cite{GAF}, and also the survey paper \cite{Peres}, which contains everything necessary for our purposes. For a full treatment of theory of point-process and understanding full nuances, we also refer the reader to \cite{Jones}. Here we content ourselves with the definitions and facts that would be useful to us later. Recall that $(S, \mu)$ is a locally compact Polish space equipped with the Borel $\sigma$-algebra and $\mu$ is a Radon measure on $S$.

\definition[Joint Intensity]{Let $\X$ be a simple point process on $(S, \mu).$ A symmetric, non-negative, locally integrable function $\rho_k:S^k\to \R$ is $k$-th joint-intensity (or correlation function) of $X$ if for any family of mutually disjoint Borel subsets $D_1, \ldots, D_k$
	$$\int_{\prod\limits_{i=1}^{k} D_i} \rho_k(x_1,\dots, x_k)d\mu(x_1)\dots d\mu(x_k)=\EE{\prod\limits_{i=1}^{k}\X(D_i)}.$$ 
}

It is clear that if the joint intensities exist, they are determined uniquely (up to almost everywhere equivalence). The key object of study in this thesis is a class of processes called \emph{determinantal processes} for which the existence of correlation functions/ joint intensities is forced by the definition. Therefore, we will not spend much time on the joint intensities here.

For the sake of completeness we remark that the joint intensities determine the law of the point process if for every compact set $D\subseteq S,$ the probabilities $$\Pr[\X(D)\ge k]\le \exp(-ck),\hspace{3mm} k\ge 1$$ for some positive constant $c.$ The proof of this fact is simple and follows from the fact that under above conditions, the random vector $(\X(D_1,)\ldots, \X(D_k))$ has convergent Laplace transform in a neighborhood of origin for any compact set $D_1,\ldots, D_k$. This allows one to uniquely specify the finite dimensional distributions of the process. Those who are not satisfied with this intuition and insist upon a detailed proof are referred to the chapter 1 of \cite{GAF}. We find it appropriate to mention that the joint-intensities of a point process can be thought of as the counterpart of the moments (more precisely, of factorial moments) of a random variable. It is not hard to see that $$\E\left({\X(D)\choose k}k!\right)=\intt_{D^k}\rho_k(x_1,\ldots,x_k)\prodd_{i\le k}d\mu(x_i).$$ The classical moment problem concerns the question of determining random variable with first $n$-moments specified. The similar questions have been asked in the context of point process by specifying the first few joint-intensities. This does not concern us at this moment, but the beauty of this subject rightfully demands its mention and we refer the reader to \cite{KLS11} for the details.

We end this section by pointing out that for a point-process with fixed deterministic total number of points, say $n$, all the joint intensities $\rho_k$ become identically $0$ for $k>n.$ Another thing which happens is that one can determine the lower order joint-intensities from $\rho_n.$ More precisely we have that 
$$\rho_k(x_1,\ldots, x_k)=\frac{1}{(n-k)!}\intt_{S^{n-k}}\rho_n(x_1,\ldots,x_n)\prodd_{i>k}d\mu(x_i).$$  

To see that it is something worth mentioning, consider the following very simple example of two point processes on a finite set $S=\{1,2,3\}.$ The first process, say $X_1$, is obtained by choosing each element from $S$ independently with probability $\frac{1}{2}.$ Note that the highest order correlation function $\P(1,2,3\in X_1)=\frac{1}{8}$, while $\rho_2(x,y)=\P(x,y\in X_1)=\frac{1}{4}$ for any $x\neq y.$ Now, consider another process $X_2$ on the same set $S$ defined by the following law. Let $1\in X_2.$ And choose $2$ with probability $\frac{1}{4}$ while $3$ with probability $\frac{1}{2}$ independently. Once again $\P(1,2,3\in X_2)=\frac{1}{8},$ but $\P(1,3\in X_2)=\frac{1}{2}, \P(1,2\in X_2)=\frac{1}{4}$ and $\P(2,3\in X_2)=\frac{1}{8}.$ This simple example illustrates that the lower order correlation functions are not always determined by the top-order correlation functions.  

\section{Stochastic domination and coupling}
In this subsection we will introduce the notion of stochastic domination and coupling. Thanks to a theorem due to Strassen\cite{Strassen} these two notions are very intimately related . 

Let us start with some motivation. Consider a sequence of random variables $X_i$ and define $M_n$ to be the maximum of $\{X_i: 1\le i\le n\}.$ It is clear that $M_n\le M_{n+1},$ and this inequality can be interpreted in strongest possible sense. Meaning, if we compare the two random variables $M_n$ and $M_{n+1}$ for each `sample', we will see that $M_n(\omega)\le M_{n+1}(\omega).$ A similar example would be obtained if we consider $S_n:= \sum\limits_{1\le i\le n}Y_i$ where $Y_i$ are all non-negative random variables. We observe that $S_n\le S_{n+1},$ and once again the inequality holds true for each $\omega.$ Let us now look at another example which is slightly more illuminating. 

\begin{example}
	Let $X = X_{\lam}$ and $Y = Y_{\mu}$ be two Poisson random variable with rate $\lam$ and $\mu$, respectively. Suppose $\lam \le \mu.$ Very naively, one might want to think that $X \le Y$ in some suitable sense. Here, we can not say that $X(\omega)\le Y(\omega)$ for each $\omega.$ But, intuitively we know that $Y$ is likely to be bigger than $X.$ This intuition can be translated into rigorous mathematics by noticing that for every real $x,$ $$\mathbb{P}(X\ge x)\le \mathbb{P}(Y\ge x).$$
	Although one can compute the above two probabilities explicitly and show that the above inequality is indeed true, here we give an alternate proof which also serves a greater goal.
	
	We first recall that sum of two independent Poisson random variables $P_1$ and $P_2$ with rate $\mu_1, \mu_2$ respectively, is again a Poisson random variable with rate $\mu_1+\mu_2.$ Therefore, we define (on some probability space) a Poisson random variable $X'\stackrel{d}{=} X_{\lam}$ and a Poisson random variable $Z,$ which is independent of $X'$ and has rate $\mu-\lam.$ By our previous remark $Y_{\mu}=^{d} X'+Z.$ We can immediately see that on this new probability space $X' \le X'+Z$ (almost surely), and therefore
	$$\mathbb{P}(X\ge x)=\mathbb{P}(X'\ge x)\le \mathbb{P}(X'+Z\ge x)=\mathbb{P}(Y\ge x).$$ 
\end{example}

We pause to iterate that we constructed two random variables $Y\stackrel{d}{=} Y':=X'+Z$ and $X\stackrel{d}{=} X',$ on some probability space such that $X'\le Y'$ almost surely. This is an instance of \emph{coupling}, that is a realization of $(X', Y')$ on same probability space such that their marginals agree with the distribution of $X$ and $Y$. With a little thought, one may find it natural to say that $Y$ stochastically dominates $X$ if we can construct a coupling as in the previous example. To restore one's faith in the justice, this turns out to be an equivalent way of defining the stochastic domination and is a well-known result due to Strassen\cite{Strassen}, which we have included as Theorem \ref{StraTheo} for the sake of completeness. 
%
%
%

\definition[Increasing set]{Let $(\Omega, \le)$ be a partially ordered set (with the partial order $\le$). A subset $\AA\subseteq \Omega$ is said to be increasing if $\omega_1\in \AA$ whenever $ \omega_0\le \omega_1$ for some $\omega_0\in\AA.$}

\definition[Stochastic domination for probability measures]{Let $(\Omega, \FF, \le)$ be a partially ordered measurable space (that is $\Omega$ is a partially ordered set equipped with a sigma algebra). Let $\P_1$ and $\P_2$ be two probability measures on $(\Omega, \FF, \le).$ We say that $\P_1$ is stochastically dominated by $\P_2$ (with respect to partial order $\le$), denoted as $\P_1\prec \P_2$, if $\P_1(\AA)\le \P_2(\AA)$ for every increasing subset $\AA\in \FF.$}

It is important to note that the whether a subset $\AA\subseteq \Omega$ is increasing or not depends very much on the partial order on the set $\Omega,$ and as a consequence an statement like $\P_1\prec \P_2$ is meaningful only when the partial order on the underlying space $\Omega$ is fixed. But whenever the partial order in question would be clear from the context, we will just write $\P_1\prec \P_2$ without any mention of the partial order. We also note that an increasing subset $\AA\subseteq \Omega$ need not be measurable, but the definition above asks for $\P_1(\AA)\le \P_2(\AA)$ only for those increasing subsets which are measurable. One may constrict examples most increasing subsets are not measurable, but often the partial order on $\Omega$ is compatible with the $\sigma$-algebra and hence we do not impose any further conditions on the partial order.

As we remarked in the beginning, the notion of Stochastic domination is intimately related to the idea of coupling. Before we end this section, we record a theorem of Strassen which connects coupling with the Stochastic domination. The traditional wisdom regarding coupling is `to have the same source of randomness' for two random variables, which allows one to compare them.

\definition[Coupling]{Let $X$ and $Y$ be two random variables on $(\Omega_1, \FF_1, \P_1)$ and $(\Omega_2, \FF_2, \P_2)$ respectively. A coupling of $X$ and $Y$ is a random vector $(X',Y')$ on a new probability space $(\Omega, \FF, \P)$ such that $X'=^d X$ and $Y'=^d Y.$}

\begin{theorem}[Strassen, 1965]
	Let $(\Omega, \le)$ be a partially ordered finite set with two probability measures, $\mu_1$ and $\mu_2.$ The following are equivalent:
	
	$\bullet$ There is a probability measure $\nu$ on $\{(x,y)\in \Omega\times\Omega: x\le y\}$ whose coordinate projections are $\mu_i.$

	$\bullet$ For each increasing subset $\AA\subseteq \Omega,$ we have $\mu_1(\AA) \le \mu_2(\AA).$
	\label{StraTheo}
\end{theorem}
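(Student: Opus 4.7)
The easy direction (coupling $\Rightarrow$ stochastic domination) I would dispatch in a sentence or two. If $\nu$ is supported on $\{(x,y) : x \le y\}$ with marginals $\mu_1, \mu_2$, and $\AA$ is increasing, then the event $\{x \in \AA\}$ is contained in $\{y \in \AA\}$ on the support of $\nu$ (by monotonicity of $\AA$), so $\mu_1(\AA) = \nu(\{x \in \AA\}) \le \nu(\{y \in \AA\}) = \mu_2(\AA)$.

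For the non-trivial direction, the plan is to realize the coupling as a feasible integral flow in a finite transportation network and apply the max-flow min-cut theorem. I would build a directed graph with source $s$, sink $t$, and two disjoint copies $\Omega_L, \Omega_R$ of $\Omega$, with edges $s \to x$ of capacity $\mu_1(x)$ for $x \in \Omega_L$, edges $y \to t$ of capacity $\mu_2(y)$ for $y \in \Omega_R$, and an edge $x \to y$ of capacity $+\infty$ whenever $x \le y$. Any $s$-$t$ flow of total value $1$ assigns non-negative numbers $\nu(x,y)$ to the middle edges that sum to $\mu_1(x)$ out of each left vertex and $\mu_2(y)$ into each right vertex, which is exactly the probability measure $\nu$ on $\{x \le y\}$ we want.

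It remains to show the maximum flow equals $1$. The upper bound is trivial from the cut that isolates $s$, of capacity $\mu_1(\Omega) = 1$. For the matching lower bound I invoke max-flow min-cut and estimate the capacity of an arbitrary cut. Given a cut $(S, \bar S)$ with $s \in S, t \in \bar S$, let $A = \Omega_L \cap \bar S$ (left vertices on the sink side). Any finite cut must sever every infinite middle edge, which forces the upward closure $A^{\uparrow} := \{y \in \Omega : \exists x \in A, \ x \le y\}$ to lie entirely in $\Omega_R \cap S$. Thus the cut capacity is at least
\[
\mu_1(\Omega \setminus A) + \mu_2(A^{\uparrow}) \;\ge\; \mu_1(\Omega \setminus A) + \mu_1(A^{\uparrow}) \;\ge\; \mu_1(\Omega \setminus A) + \mu_1(A) \;=\; 1,
\]
where the first inequality uses the hypothesis applied to the increasing set $A^{\uparrow}$ and the second uses $A \subseteq A^{\uparrow}$.

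The main obstacle, such as it is, is identifying the right increasing set to plug into the hypothesis: the argument only works because the upward closures $A^{\uparrow}$ of arbitrary sets are themselves increasing, and because infinite middle capacities pin down exactly those sets. Once that observation is made, finiteness of $\Omega$ makes max-flow min-cut directly available and the rest is bookkeeping. (An alternative I would mention is to phrase the same proof as the LP duality between an optimal transport problem with constraint $x \le y$ and its dual; the two approaches are essentially the same combinatorial fact dressed differently.)
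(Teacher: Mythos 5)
The paper does not actually prove Strassen's theorem; it cites Chapter~10 (Theorem~10.4) of Lyons--Peres for a max-flow/min-cut argument, and your proposal follows precisely that route: easy direction via monotonicity of increasing sets, hard direction via a bipartite transportation network with infinite-capacity edges along $\le$. The structure is right, but there is a labeling slip in the cut estimate that currently makes the middle step false as written. You set $A=\Omega_L\cap\bar S$, the \emph{sink}-side left vertices. But the source edges $s\to x$ that are severed are exactly those with $x\in\Omega_L\cap\bar S=A$, so their contribution is $\mu_1(A)$, not $\mu_1(\Omega\setminus A)$. Moreover, the infinite middle edge $x\to y$ is severed only when $x\in S$ and $y\in\bar S$, so finiteness constrains the upward closure of the \emph{source}-side left vertices, not of $A$. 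The fix is to take $A=\Omega_L\cap S$; then finiteness of the cut (no infinite edge may be severed — your sentence says ``must sever'' where it should say ``must not sever'') forces $A^{\uparrow}\subseteq\Omega_R\cap S$, the severed source edges contribute $\mu_1(\Omega\setminus A)$, the severed sink edges contribute at least $\mu_2(A^{\uparrow})$, and your chain $\mu_1(\Omega\setminus A)+\mu_2(A^{\uparrow})\ge\mu_1(\Omega\setminus A)+\mu_1(A^{\uparrow})\ge 1$ goes through verbatim. With those two corrections the proof is complete and is essentially identical to the one the paper defers to.
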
 

The first statement in the theorem is essentially the existence of a coupling i.e. existence of a measure on the product space with the correct marginals, while the second statement is of course saying that $\mu_1$ is stochastically smaller than $\mu_2.$ Observe that under the measure $\nu,$ almost surely, the first component is smaller than the second, which is analogous to the construction we did in the case of Poisson random variables.  

An elegant proof of the above theorem using `min-cut max-flow theorem' can be found in Chapter 10 (Theorem 10.4) \cite{LyonsPeres}. In the remaining of the thesis we will not be concerned with any explicit coupling.
\chapter{Determinantal processes}

In this chapter we introduce the notion of the determinantal point processes. We also record some key properties of these processes which shall be useful later. In order to facilitate the understanding of determinantal processes, we start with discrete case and study the example of the Uniform spanning tree. We will also record some interesting examples of determinantal processes in the continuous setting. 

\section{Definition and properties}
As we have already noted that a point process $\X$ is a random discrete subset of a locally compact Polish space. We now turn towards a special class of point processes which has made its appearance in many different areas of probability, namely the determinantal processes. The systematic study of the determinantal processes began with Macchi's work (1975) on `fermionic processes', although the use of determinantal processes in random matrix theory was known since early 60s. One crucial feature of `fermionic' particles is that they repel each other and determinantal processes capture this interaction. Before we begin the discussion of determinantal processes we remind the reader that throughout this chapter $(S,\mu)$ will be a locally compact Polish space.

\definition{A point process $\X$ on $(S,\mu)$ is said to be \emph{determinantal} if it is simple and there exist a locally integrable function $K:S\times S\to \C$ such that 
	$$\rho_k(x_1,\ldots, x_n)=\det(K(x_i,x_j))_{1\le i,j\le k}$$
	for every $k\ge 1.$}

Determinantal processes satisfy many algebraic identities and that is probably one reason why these processes are so ubiquitous.

We recall that for a general point process the existence of correlation functions is not guaranteed. For a determinantal process the existence of correlation functions is a part of the definition. One may imagine that there would be other processes with similar definitions in which the correlation functions are given by some other algebraic quantities instead of determinant viz. permanent, immanant or pfaffian etc. We wish to point out that such processes have been indeed defined and have been studied. We will not pursue the subject here, but we refer the interested reader to \cite{SoshnikovPfaf}, \cite{SoshnikovDet}, \cite{Peres}, \cite{GAF} for the definitions and examples of such processes which has been of interest.

Coming back to the determinantal processes, we notice that the kernel $K$ cannot be completely arbitrary. For example, as the joint intensities are non-negative and locally integrable it follows that $\det(K(x_i,x_j))_{1\le i,j\le k}$ must be non-negative and locally integrable w.r.t. $\mu^{\otimes k}$. There are other caveats in the definition which one should be careful about. For example, the first correlation function of a determinantal process is given by $\rho_1(x)=K(x,x).$ But as a general measurable function is defined only upto almost everywhere equivalence, the function $K(x,x)$ might not even be well-defined (if $\mu$ is non-atomic the diagonal has measure zero). Of course there are similar issues with higher correlation functions as well. Moreover, the existence and uniqueness of a determinantal process is not immediately obvious from the definition above. 

It is not hard to see that one can modify the measure and Kernel of a determinantal process together without changing the process. For example, consider a determinantal process on $(S,\mu)$ with kernel $K.$ Let $f:S\to \C$ be a function such that $\frac{1}{f}$ is locally square integrable. Define a new measure $d\mu_f=\frac{1}{|f|^2}d\mu$ and kernel $K_f(x,y)=f(x)K(x,y)\overline{f(y)}.$ Then, the same determinantal process can be treated as a determinantal process on $(S,\mu_f)$ with the kernel $K_f$. This shows that there is at least a limited amount of freedom available to us in choosing the measure and kernel pair. In fact, we will exploit this freedom later when we would compare two determinantal processes.

In the upcoming sections we will see some examples of determinantal processes in discrete as well as continuous case. In discrete case -- that is when $S$ is an at most countable set with some random measure (for example counting measure) -- the issue of well-definedness of the correlation function does not arise. Similarly, in the general case if the kernel $K(x,y)$ is continuous, the problem is resolved. The examples which we will be dealing with will be of this nature. Therefore, we will not worry about this issue. Yet for the sake of completeness, we must add that the continuity of $K$ is indeed very restrictive and is not required for $K(x, x)$ to be well-defined.

Recall that a kernel $K$ is square-integrable on $S^2,$ if
$$\int\limits_{S^2}|K(x,y)|^2 d\mu(x) d\mu(y)<\infty.$$
Such a kernel $K$ defines an integral operator $\mathcal{K}$ on $L^2(S,\mu).$  Moreover, the operator $\mathcal{K}$ is a Hilbert-Schmidt operator, in particular, it is compact. If additionally we assume that $K(x,y)=\overline{K(y,x)},$ then the integral operator defined by $K$ is also self-adjoint. From the spectral theorem for compact self-adjoint operators, we have that there are at most countably many distinct eigenvalues of $\mathcal{K}$ and all the eigenvalues (except possibly $0$), have finite multiplicities. Moreover, $L^2(S, \mu)$ admits an orthonormal basis of eigenfunctions $\{\phi_i\}$ of $\mathcal{K}$ and we have the following representation for the kernel $K$, $$K(x,y)\stackrel{L^2}{=} \sum\limits_{i=1}^{\infty}\lambda_i\phi_i(x)\overline{\phi_i(y)}.$$
However, the above equality holds only in $L^2,$ and therefore, $K(x, x)$ is still not well-defined. Therefore, we make an extra assumption that the integral operator $\mathcal{K}$ associated with the kernal $K$ is trace class, that is, $\sum\limits_{i}^{\infty}|\lambda_i|<\infty.$ With the assumption that $\mathcal{K}$ is trace-class, we can write $K(x,y)= \sum\limits_{i=1}^{\infty}\lambda_i\phi_i(x)\overline{\phi_i(y)},$  where the sum in the right hand side converges absolutely almost everywhere, that is, there exists $S_1\subseteq S$ such that $\mu(S\setminus S_1)=0$ and the series $K(x,y)= \sum\limits_{i=1}^{\infty}\lambda_i\phi_i(x)\overline{\phi_i}(y)$ converges absolutely on $S_1\times S_1.$ (Of course, in addition it still converges in $L^2.$) This allows us to defined the joint intensities $\rho_k$ on $S^k$ a.e. with respect to $\mu^{\otimes k}$ when $K$ defines a trace class operator. Recall that $K$ is locally square-integrable on $S^2,$ if  $$\int\limits_{D^2}|K(x,y)|^2 d\mu(x) d\mu(y)<\infty$$ for every compact set $D\subseteq S.$ If $K$ is locally square-integrable and Hermitian, then it defines a self-adjoint operator $\mathcal{K}$ on the space of all functions $f\in L^2(S, \mu)$ which vanish $\mu$ a.e. outside some compact subset of $S$. The restriction of $\mathcal{K}$ to $L^2(D, \mu)$ for any compact subset $D\subseteq S,$ say $\mathcal{K}_D,$ is then a compact self-adjoint operator. We say that operator $\mathcal{K}$ is locally trace-class if $\mathcal{K}_D$ is trace-class for every compact subset $D.$ The condition that $\mathcal{K}$ is trace-class is too restrictive, but it suffices to consider the locally square-integrable kernel $K$ such that associated integral operator $\mathcal{K}$ is locally trace class. This turns out to be sufficient for defining the joint intensities $\rho_k$ on $S^k$ a.e. with respect to $\mu^{\otimes k}.$ For the detailed proofs of the above claim we refer to the Chapter 4 of \cite{GAF}. 

Before we proceed further, we must point that generally the kernel $K$ need not be Hermitian, and there are known examples of determinantal processes with non-Hermitian kernels which we shall not pursue here.  Recall from the Chapter 1 that specifying the joint intensities determines the law of a point-process $\X$ only if for every compact set $\X(D)$ has exponentially decaying tail i.e. $\P(\X(D)>k)\le C_De^{-c_Dk}.$ For a determinantal process it is indeed the case and therefore the kernel $K$ of a determinantal process $\X$ specifies the law of $\X$ uniquely. 

\begin{lemma}[Lemma 4.2.6, \cite{GAF}]
	Let $\X$ be a determinantal process with the (hermitian) kernel $K$. Then for any compacts set $D\subseteq S,$ there exists constants $C_D>0, c_d>0$ such that $$\P(\X(D)>k)\le C_De^{-c_Dk}.$$
\end{lemma}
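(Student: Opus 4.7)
The plan is to reduce the question to a sum of independent Bernoullis and then apply a Chernoff-type estimate. The key input is the following representation (due to Hough--Krishnapur--Peres--Vir\'ag; proved in Chapter 4 of \cite{GAF}): if $\mathcal{K}_D$ denotes the restriction of the integral operator $\mathcal{K}$ to $L^2(D,\mu)$, then $\mathcal{K}_D$ is a positive, self-adjoint, trace-class operator with eigenvalues $\{\lambda_i\}_{i\ge 1} \subseteq [0,1]$, and the count $\X(D)$ has the same law as $\summ_i \xi_i$, where the $\xi_i \sim \mathrm{Bernoulli}(\lambda_i)$ are independent. Positivity of $\mathcal{K}_D$ and the bound $\lambda_i \le 1$ are forced by the fact that $\X$ is a bona fide point process (the Macchi--Soshnikov condition), while finiteness of $\Tr(\mathcal{K}_D) = \summ_i \lambda_i$ is the standing local trace-class assumption applied to the compact set $D$.

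Once this representation is in hand, the tail bound is a one-line Chernoff estimate. Using independence of the $\xi_i$ and $1+x \le e^x$, one gets $\E[e^{t\X(D)}] = \prodd_i(1+\lambda_i(e^t-1)) \le \exp((e^t-1)\Tr(\mathcal{K}_D))$ for every $t>0$. Taking $t=1$ and applying Markov's inequality yields
\begin{align*}
\P(\X(D) > k) \le e^{-k}\E\l[e^{\X(D)}\r] \le \exp\l((e-1)\Tr(\mathcal{K}_D) - k\r),
\end{align*}
which is the desired bound with $c_D = 1$ and $C_D = \exp((e-1)\Tr(\mathcal{K}_D))$; finiteness of this constant is exactly the trace-class assumption on $\mathcal{K}_D$.

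The only nontrivial ingredient is the Bernoulli representation, which I would invoke from \cite{GAF} rather than reprove: its derivation passes through expanding $\E[z^{\X(D)}]$ as a Fredholm determinant and identifying it with $\prodd_i(1 - \lambda_i + z\lambda_i)$. If a self-contained argument were preferred, one could instead bound the factorial moments
\[
\E\l[\binom{\X(D)}{k}\r] = \frac{1}{k!}\intt_{D^k}\det\l(K(x_i,x_j)\r)_{i,j\le k}\,d\mu(x_1)\cdots d\mu(x_k)
\]
directly via Hadamard's inequality and the spectral expansion of $K$ on $D$; this alternative route also gives exponential decay but produces less transparent constants than the MGF argument above.
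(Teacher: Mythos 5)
Your proof is correct, but it takes a genuinely different route from the one in the paper. The paper's argument is more elementary: it bounds the $k$-th factorial moment $\E\bigl[\binom{\X(D)}{k}k!\bigr] = \int_{D^k}\det(K(x_i,x_j))\,\prod d\mu(x_i)$ directly by Hadamard's inequality applied to the positive semidefinite matrix $(K(x_i,x_j))$, getting $m_D^k$ with $m_D = \int_D K(x,x)\,d\mu$, then sums to get $\E[(1+s)^{\X(D)}] \le e^{sm_D}$ and finishes with Markov's inequality. That route never invokes the Bernoulli representation of $\X(D)$. Your proof, by contrast, imports the full strength of the Hough--Krishnapur--Peres--Vir\'ag structural theorem ($\X(D)\stackrel{d}{=}\sum_i\mathrm{Bernoulli}(\lambda_i)$), for which the eigenvalues must lie in $[0,1]$ --- in the paper this is the Macchi--Soshnikov condition, stated \emph{after} the lemma, so the paper is deliberately structured to prove this tail bound without that machinery. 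What your approach buys is a cleaner conceptual picture and sharper constants (the Chernoff bound through the product formula is genuinely tighter than what Hadamard gives), but at the price of a logically heavier input. You correctly flag the Hadamard route as the ``self-contained'' alternative; that route is exactly what the paper does, so your awareness of it is well placed. One small consistency note: in your Chernoff step you only use $1+\lambda_i(e^t-1)\le e^{\lambda_i(e^t-1)}$, which needs only $\lambda_i\ge0$, but the Bernoulli representation itself requires $\lambda_i\le1$, so the Macchi--Soshnikov condition is needed for the representation even though not for the final inequality.
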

\begin{proof}
	First note that for any compact set $D\subseteq S$ we must have 
	\begin{align*}
	\E\left({\X(D) \choose k}k!\right)&=\int\limits_{D^k}\det(K(x_i,x_j))_{1\le i,j\le k}\prod_{i=1}^{k}d\mu(x_i)\\
	&\le \intt_{D^k}\prod_{i=1}^{k} K(x_i,x_i)\prod_{i=1}^{k}d\mu(x_i)\\
	&=\left(\intt_D K(x,x)d\mu(x)\right)^k  <  \infty.
	\end{align*}
	where the inequality uses Hadamard's inequality for the determinant of positive semi-definite matrices ($\det(M)\le \prod_i (M)_{i,i}$). The finiteness of the last integral follows from the fact that $D$ is compact (recall that the joint intensities are locally integrable). Now for any $s>0,$ we have 
	\begin{align*}
		\E\left((1+s)^{\X(D)}\right) &= \sum\limits_{k\ge 0}s^k\E\left({\X(D)\choose k}\right)\\
		&\le \sum\limits_{k\ge 0}\frac{s^km_D^k}{k!},\hspace{5mm}\text{where}\hspace{2mm} m_D=\int_D K(x,x)d\mu(x)\\
	&=e^{-sm_D}.
	\end{align*}
	Apply Chebyshev's inequality to get 
	$$\P(\X(D)>k)\le (1+s)^{-k}\E\left((1+s)^{\X(D)}\right)\le (1+s)^{-k}e^{-sm_D}$$
	which proves the claim.
\end{proof}
In the light of this lemma and the discussion in the chapter 1, it follows that the determinantal processes are uniquely determined by their (Hermitian) kernels. We must also caution that not all kernels $K$, even when $K$ is Hermitian, determine a determinantal process. The following theorem gives a simple criterion for determining which Hermitian kernels determine a determinantal process. 

\begin{theorem}[Macchi, Soshnikov]
	Let $K$ be a Hermitian kernel on $(S,\mu)$ which defines a locally-trace class operator $\mathcal{K}$ on $L^2(S,\mu).$ Then $K$ determines a determinantal process if and only if $0\le \mathcal{K}\le I.$
	\label{MacchiSo}
\end{theorem}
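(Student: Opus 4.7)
The plan is to prove both implications. The necessity direction exploits the probabilistic meaning of the eigenvalues of $\mathcal{K}_D$ for compact $D$, while sufficiency requires an explicit construction in three stages of increasing generality: finite-rank projection kernels, general finite-rank operators with spectrum in $[0,1]$, and general locally trace-class $\mathcal{K}$.

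For necessity, assume a determinantal process $\X$ with Hermitian kernel $K$ exists. Positive semi-definiteness $\mathcal{K} \ge 0$ follows from non-negativity of the correlation functions: since $\det(K(x_i,x_j))_{i,j \le k} \ge 0$ a.e.\ for every $k$, testing the associated quadratic form against compactly supported functions forces $\mathcal{K}_D \ge 0$ for every compact $D$. For the upper bound, fix compact $D$ and let $(\lambda_i^D)$ be the eigenvalues of the trace-class operator $\mathcal{K}_D$. The probability generating function of $\X(D)$ satisfies
\[
\E[z^{\X(D)}] \;=\; \sum_{k\ge 0} \frac{(z-1)^k}{k!} \int_{D^k} \det(K(x_i,x_j))\,d\mu^{\otimes k} \;=\; \prod_i \bigl(1 + (z-1)\lambda_i^D\bigr),
\]
using the Fredholm identity $\int_{D^k}\det(K(x_i,x_j))\,d\mu^{\otimes k} = k!\,e_k(\lambda_1^D,\lambda_2^D,\ldots)$. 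If some $\lambda_i^D > 1$, the product on the right vanishes at $z^\ast = 1 - 1/\lambda_i^D \in (0,1)$; but the left side has non-negative coefficients summing to $1$, so $\sum_n \P(\X(D) = n)(z^\ast)^n = 0$ with $z^\ast > 0$ forces every $\P(\X(D) = n) = 0$, a contradiction.

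For sufficiency, first take $\mathcal{K} = \sum_{i=1}^n \phi_i \otimes \overline{\phi_i}$ a rank-$n$ projection. Define a symmetric probability measure on $S^n$ by $d\nu_n = \tfrac{1}{n!}\det(K(x_i,x_j))\,d\mu^{\otimes n}$; Cauchy-Binet plus orthonormality of $\{\phi_i\}$ confirms total mass $1$ and that the $k$-point correlations are $\det(K(x_i,x_j))_{i,j \le k}$. For a general finite-rank $\mathcal{K} = \sum_i \lambda_i \phi_i \otimes \overline{\phi_i}$ with $\lambda_i \in [0,1]$, introduce independent $B_i \sim \mathrm{Ber}(\lambda_i)$ and, conditional on $(B_i)$, sample the projection process for $\sum_{i:B_i=1}\phi_i\otimes\overline{\phi_i}$; averaging via Cauchy-Binet produces correlations $\det(K(x_i,x_j))_{i,j \le k}$.

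The main obstacle is extending to general locally trace-class $\mathcal{K}$ and patching across an exhaustion of $S$. For each compact $D$, spectrally decompose $\mathcal{K}_D$ and truncate to obtain processes $\X_D^{(n)}$ on $D$ via the preceding step; then show $\X_D^{(n)}$ converges in distribution in the vague topology on $\MM(D)$ to a determinantal process with kernel $K|_{D\times D}$. Tightness comes from the uniform tail bound of the preceding lemma, and identification of the limit from convergence of all factorial moments (controlled by $\Tr \mathcal{K}_D < \infty$). Finally, an exhaustion $D_1 \subset D_2 \subset \cdots$ combined with Kolmogorov-type consistency at the level of joint intensities yields the process on all of $S$. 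The most delicate step is verifying that the local finite-volume distributions are compatible under restriction across the exhaustion, which follows from the uniqueness of a determinantal process given its kernel, since $K|_{D_{j+1}\times D_{j+1}}$ restricts to $K|_{D_j\times D_j}$.
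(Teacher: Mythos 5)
The paper states this theorem but explicitly omits its proof, deferring to \cite{Soshnikov} and \cite{Peres}, so there is no in-paper argument to compare against. Your sketch reproduces the standard proof (essentially that of \cite{Peres}) and its architecture is correct. A few spots should be tightened. For $\mathcal{K}\ge 0$: non-negativity of every $\rho_k$ gives that the Hermitian matrix $(K(x_i,x_j))_{i,j\le k}$ has all principal minors non-negative, hence is positive semi-definite, for a.e.\ tuple; to upgrade this to positivity of $\mathcal{K}_D$ as an operator you should integrate the pointwise inequality $\sum_{i,j}\overline{f(x_i)}K(x_i,x_j)f(x_j)\ge 0$ over $D^k$, separate the $O(k)$ diagonal terms from the $O(k^2)$ off-diagonal terms, and let $k\to\infty$ (or carry out an equivalent step-function approximation) --- ``testing the quadratic form'' hides this step. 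For $\mathcal{K}\le I$: the identity $\E[z^{\X(D)}]=\prod_i(1+(z-1)\lambda_i^D)$ is the right lever and the contradiction at $z^\ast=1-1/\lambda_i^D\in(0,1)$ is correct, but the factorial-moment expansion and the Fredholm rearrangement should be justified as absolutely convergent series; the paper's tail bound $\P(\X(D)>k)\le C_De^{-c_Dk}$ and $\Tr\mathcal{K}_D<\infty$ are exactly what makes this work, so cite them. In sufficiency you implicitly use that each truncation of $\mathcal{K}_D$ has eigenvalues in $[0,1]$; this holds because compression of $\mathcal{K}$ to $L^2(D,\mu)$ preserves the spectral interval $[0,1]$, and that deserves a sentence. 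With these caveats, the projection step, the Bernoulli-mixture step (whose averaging identity is a Cauchy--Binet computation), the weak-limit step over truncations, and the exhaustion/consistency argument via uniqueness of the determinantal law given a kernel are all sound.
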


We omit the proof of the theorem but we refer the reader to \cite{Soshnikov} for the original proof of Soshnikov. An alternate proof of the theorem can be found in the survey article \cite{Peres}. We also wish to point out that there are no analogous results known for the necessary and sufficient conditions for a kernel to determine a determine a determinantal process when $\mathcal{K}$ is not Hermitian. 

A particular case of the above theorem (although, it is used to prove the above theorem in \cite{GAF}) is obtained when the operator $\mathcal{K}$ is a finite rank projection. The examples we would be working with will usually be of this nature, therefore we record it as a lemma. 

\begin{lemma}
Suppose $\{\phi_i\}_{i=1}^{n}$ is an orthonormal set in $L^2(S,\mu).$ Then there exists a determinantal process with the kernel $K(x,y)=\sum\limits_{i=1}^{n}\phi_i(x)\overline{\phi_{i}(y)}.$	
\label{finiteprojectiondpp}
\end{lemma}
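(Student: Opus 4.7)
The plan is to exhibit $\X$ concretely as the (unordered) support of an exchangeable random $n$-tuple $(X_1,\ldots,X_n)\in S^n$ whose density with respect to $\mu^{\otimes n}$ is a normalised determinant of the kernel, and then check by direct integration that the joint intensities of this process coincide with $\det(K(x_i,x_j))_{i,j=1}^{k}$ for every $k$.

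The candidate density is
$$p(x_1,\ldots,x_n) \;:=\; \frac{1}{n!}\det\bigl(K(x_i,x_j)\bigr)_{1\le i,j\le n}.$$
Non-negativity and normalisation are immediate: writing $M_{ij}=\phi_j(x_i)$, the Gram decomposition $(K(x_i,x_j))_{i,j}=MM^{*}$ yields $\det(K(x_i,x_j))=|\det M|^{2}\ge 0$; and the Andr\'{e}ief identity combined with orthonormality of $\{\phi_i\}$ gives $\int_{S^n}|\det M|^{2}\,d\mu^{\otimes n}=n!$, so $p$ is a probability density. With $(X_1,\ldots,X_n)$ drawn from $p$, set $\X:=\sum_{i=1}^{n}\delta_{X_i}$. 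Since $p$ vanishes whenever two arguments coincide (the matrix then has two equal rows), the $X_i$ are almost surely distinct and $\X$ is simple.

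By exchangeability of $p$, the $k$-th joint intensity of $\X$ is
$$\rho_k(x_1,\ldots,x_k) \;=\; \frac{n!}{(n-k)!}\int_{S^{n-k}} p(x_1,\ldots,x_n)\,d\mu(x_{k+1})\cdots d\mu(x_n).$$
The technical heart of the argument is then the reduction formula
$$\int_S \det\bigl(K(y_i,y_j)\bigr)_{i,j=1}^{m}\,d\mu(y_m) \;=\; (n-m+1)\,\det\bigl(K(y_i,y_j)\bigr)_{i,j=1}^{m-1}.$$
Iterating this for $m=n,n-1,\ldots,k+1$ accumulates a factor of $(n-k)!$ which cancels the prefactor above and leaves exactly $\det(K(x_i,x_j))_{i,j=1}^{k}$, which is the defining property of a determinantal process with kernel $K$.

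The hard part is the reduction formula itself. I would prove it by expanding the left-hand determinant via the Leibniz formula as a signed sum over $\sigma\in S_m$, grouping terms according to the value $\sigma(m)$, and using the two identities
$$\int_S K(x,y)K(y,z)\,d\mu(y)=K(x,z),\qquad \int_S K(x,x)\,d\mu(x)=n,$$
both of which follow immediately from orthonormality of $\{\phi_i\}$, since the integral operator with kernel $K$ is the orthogonal projection onto $\mathrm{span}\{\phi_i\}$, hence idempotent and of trace $n$. Permutations with $\sigma(m)=m$ contribute a total of $n\cdot\det_{m-1}$ via the trace identity; each of the remaining $m-1$ choices $\sigma(m)=j\ne m$ contracts a length-two factor $K(y_{\sigma^{-1}(m)},y_m)K(y_m,y_j)$ into $K(y_{\sigma^{-1}(m)},y_j)$ via the reproducing identity, while the sign flip arising from removing a transposition inside a cycle of $\sigma$ produces an extra $-1$, contributing in total $-(m-1)\cdot\det_{m-1}$. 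Combining these two contributions gives the stated coefficient $(n-m+1)$.
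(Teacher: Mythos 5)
Your proof is correct, but takes a genuinely different route from the paper. Both you and the paper start identically (Gram decomposition for non-negativity, orthonormality/Andr\'eief for the normalisation $n!$, and the factorial-moment formula $\rho_k=\frac{1}{(n-k)!}\int_{S^{n-k}}\rho_n$ for a process with exactly $n$ points). Where you diverge is in computing the marginal integral. The paper factors $\det(K(x_i,x_j))_{i,j\le n}=\det(A)\det(\bar A)$, expands each factor via Leibniz into a double sum over permutation pairs $(\pi,\sigma)$, uses orthonormality to force $\pi(n)=\sigma(n)$, re-packages the remaining sum as a sum of products of minors indexed by $j\ne n$, and then invokes the Cauchy--Binet identity to recognise the result as $\det(K(x_i,x_j))_{i,j\le n-1}$; it carries out one integration explicitly and gestures at the iteration. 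You instead prove the one-step contraction
$$\int_S \det\bigl(K(y_i,y_j)\bigr)_{i,j\le m}\,d\mu(y_m)=(n-m+1)\det\bigl(K(y_i,y_j)\bigr)_{i,j\le m-1},$$
directly from the Leibniz expansion of the Hermitian determinant itself, using only that $K$ is the kernel of a rank-$n$ projection: the trace identity $\int K(x,x)\,d\mu=n$ handles the permutations fixing $m$, and the reproducing identity $\int K(x,y)K(y,z)\,d\mu(y)=K(x,z)$ together with the sign flip from shortening a cycle handles the remaining $m-1$ residue classes. Iterating accumulates exactly $(n-k)!$. This is the standard ``integrating out one variable'' lemma in the DPP literature; it is tighter than the paper's sketch (no appeal to Cauchy--Binet, no double permutation sum, and the iteration is completely explicit), and it makes clear that only the projection structure of $K$ is used, not the particular orthonormal basis. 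The paper's route is more elementary in that it never introduces the idempotence of the integral operator, but it leaves the lower-order intensities ``to the reader.'' Both are valid proofs.
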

We give a proof of this lemma which is taken from \cite{GAF}, because it contains some elementary but useful ideas. An important property of the determinantal process obtained from the finite rank projection kernel of rank say $n$, is that such a process almost surely contains $n$ points. That is this determinantal process has fixed, finite, deterministic number of total points. The proof is not hard. It is clear that the matrix $K(x_i,x_j)_{1\le i,j\le m}$ has rank at most $n.$ Therefore, $\E\left({\X(S)\choose k}\right)=0$ for every $k\ge n+1,$ which means $|\X(S)|\le n$ almost surely.   But, the first intensity $\rho_1(x)=K(x,x),$ which means
\begin{align*}
\E\left(X(S)\right)&=\intt_S K(x,x)d\mu(x)\\
&=\summ_{i=1}^{n}\int_S|\phi_i(x)|^2d\mu(x)\\
&= n.
\end{align*}
It is clear from the above discussion that $X(S)=n$ almost surely ($X(S)$ is a random variable bounded almost surely by $n,$ but has expectation $n$). We recall from chapter 1, that for such a process the lower order intensity functions are determined by $\rho_n.$ This fact will be useful in the proof the lemma \ref{finiteprojectiondpp}.
  
\begin{proof}[Proof of lemma \ref{finiteprojectiondpp}]

First observe that for any $x_1,\ldots, x_n$, we have that \[(K(x_i,x_j))_{1\le i,j\le n}=AA^*,\] where $A(i,k)=\phi_k(x_i)$, that is, $K$ is positive semi-definite. It, therefore, follows that $\det(K(x_i,x_j))_{1\le i,j\le k}\ge 0$ for any $k.$ A straightforward computation, using the fact that $\{\phi_i\}$ is orthonormal, one can show that 
	\[\intt_{S^n}\det(K(x_i,x_j))_{1\le i,j\le n}=n!.\]
It therefore, follows that $\frac{1}{n!}\det(k(x_i,x_j))_{1\le i,j\le n}$ is a probability density on $S^n.$ Treating the random variable thus obtained as unlabeled points in $S$, we get the joint intensity $\rho_n(x_1,\ldots, x_n)=\det(K(x_i,x_j))_{1\le i,j\le n}.$ As we remarked earlier, this determines the lower order joint intesities via the formula
$$\rho_k(x_1,\ldots,x_k)=\frac{1}{(n-k)!}\intt_{S^{n-k}}\rho_n(x_1,\ldots,x_n)\prodd_{i>k}d\mu(x_i).$$ 
Following \cite{GAF}, we compute $\rho_{n-1}$ below, and leave the details to obtain lower order intensity functions.
	\begin{align*}
	\rho_{n-1}(x_1,\ldots,x_{n-1})&=\intt_S \rho_n(x_1,\ldots,x_n)d\mu(x_n)\\
	&=\intt_S \det(K(x_i,x_j))_{1\le i,j\le n}d\mu(x_n)
	\end{align*}
	which can be expanded into
	$$\summ_{\pi, \sigma \in S_n}\sgn(\pi\sigma)\prodd_{i=1}^{n-1}\phi_{\pi(k)}(x_k)\overline{\phi}_{\pi(k)}(x_k)\intt_S\phi_{\pi(n)}(x_n)\phi_{\sigma(n)}(x_n)d\mu(x_n).$$
	Using the fact that $\phi_i$ were orthonormal, we se that the integral in the above expression is non-zero only when $\pi(n)=\sigma(n),$ therefore it is equal to
	$$\summ_{j=1}^{n-1}\summ_{\stackrel{\pi, \sigma \in S_n:}{\pi(n)=\sigma(n)=j}}\sgn(\pi\sigma)\prodd_{i=1}^{n-1}\phi_{\pi(k)}(x_k)\overline{\phi}_{\pi(k)}(x_k).
	$$
	Observing that if $\pi$ and $\sigma$ both send $n$ to $j$, we can treat them as a permutation of $\{1,\ldots, n-1\}$ in a natural way, one obtains that
	\begin{align*}
	\summ_{j=1}^{n-1}\summ_{\stackrel{\pi, \sigma \in S_n:}{\pi(n)=\sigma(n)=j}}\sgn(\pi\sigma)&\prodd_{i=1}^{n-1}\phi_{\pi(k)}(x_k)\overline{\phi}_{\pi(k)}(x_k)\\ &= \summ_{j=1}^{n-1} \det(\phi_k(x_i))_{\stackrel{1\le i\le n-1,}{k\neq j}}\det(\overline{\phi}_k(x_i))_{\stackrel{1\le i\le n-1,}{k\neq j}}.
	\end{align*} 
	An application of Cauchy-Binet formula now yield the desired formula for the correlation function.
\end{proof}

\begin{remark} We wish to recall here that a point-process is a random measure. In the above proof we are treating the law of $\X$ as a probability measure on $S^n$. In the next chapter we will be comparing the determinantal processes with the kernels $K_1(x,y)=\summ_{i=1}^n \phi_i(x)\overline{\phi}_i(y)$ and $K_2(x,y)=\summ_{i=1}^{n+1} \phi_i(x)\overline{\phi}_i(y)$ respectively. As we have seen in Chapter 1, that we can compare two measures on some partially ordered set, in order to compare these processes it is useful to keep in mind that their laws are the probability measures on all finite subsets of $S,$ (or probability measures on $\mathcal{M}(S).$)
\end{remark}

It turns out that any determinantal process with a Hermitian, non-negative definite, trace-class kernel $K$ can be seen as a \emph{mixture} (convex combination of measures) of the determinantal processes with projection kernel. And if the eigenvalues of the integral operator associated with the kernel $K$ are $\lambda_k$ (recall that it follows from Theorem \eqref{MacchiSo} that $\lambda_k\le 1$), $k\ge 1$ then the total number of points in the process is distributed according to the sum of independent Bernoulli($\lambda_k$) random variables. Therefore, for most purposes one can restrict one's attention to studying the determinantal processes with finite rank projection kernels. 

Another interesting example of determinantal process is obtained from \emph{bi-orthogonal ensemble}, which can be seen as a generalization of the determinantal processes obtained from finite rank projections. 
\definition[Bi-orthogonal ensemble]{Consider a state space $E$ (locally compact Polish space) with a reference (Radon) measure $\mu$ on it. An $n$-point bi-orthogonal ensemble on $E$ is a measure on $E^n$ given by 
 	\[\P_n(dx_1,\ldots, dx_n):= C_n\det[\phi_i(x_j)]_{i,j=1}^{n}\det[\psi_i(x_j)]_{i,j=1}^{n}\prod\limits_{i=1}^{n}\mu(dx_i),\]
 	for some suitable normalization constant $C_n>0$, and function $\phi_i, \psi_i$ such that all the integrals $G_{ij}:=\int \phi_i(x)\psi_j(x)\mu(dx)$ are finite.}
%
%

A proof of the fact that a bi-orthogonal ensemble is a determinantal process can be found in Lemma 4.2.50 of \cite{AGZ}. We will leave this subject here but we refer the reader to \cite{Lyons}, \cite{GAF}, \cite{Peres} for a detailed discussion of determinantal processes and examples thereof. We refer the reader to \cite{GAF} for more probabilistic intuition behind the determinantal processes and an algorithm to generate a determinantal process.

\section{Continuous case}
The examples of determinantal processes in continuous case are abound. The joint law of eigenvalues of various matrix ensembles turn out to be determinantal with projection kernels. We record some examples of determinantal processes in continuous setting here for the sake of completeness but we refer the reader to \cite{GAF}, \cite{Soshnikov}, \cite{Peres} for details.
\begin{example}[Zeroes of Gaussian analytic functions]
	Let $f(z):=\summ_{n=0}^{\infty}a_nz^n$ where $a_n$ are i.i.d standard complex Gaussian random variables. It is not hard to see that it almost surely defines an analytic functions  on the unit disk. The zero set of this function $f$ was shown to be determinantal by Peres and Virag\cite{GAF}. The kernel of this process (with respect to Lebesgue measure on the disk) is given by the Bergman kernel on unit disk i.e. 
	$$K(z,w)=\frac{1}{\pi(1-z\overline{w})^2}$$
\end{example}

Probably the most important and stimulating example of a continuous determinantal process arises as the joint density of eigenvalues of some random matrix ensemble. We will talk about few such ensembles in coming chapters. Here, we record one such example which is known as Ginibre ensemble. 

\begin{example}
	Let $A$ be an $n\times n$ matrix with i.i.d standard complex Gaussian entries. The eigenvalues of $A$ form a determinantal process on $\C$ with kernel
	$$K(z,w)=\frac{1}{\pi}e^{-\frac{1}{2}(|z|^2+|w|^2)+z\overline{w}}.$$	
\end{example}
There are other random matrix ensembles for which the eigenvalues form a determinantal process. For an interested reader we refer to \cite{Forrester}, \cite{AGZ} for many such examples.
\section{Discrete case}
In this section we will deal with a point process defined on a discrete measure space $(S,\mu).$ One can always keep in mind a subset of $\N$ as a model equipped with some reference measure. We rephrase the definition in this setting, in order to make things more transparent. 

\definition{
Let $S$ be an at most countable set. A simple point process $X$ on $S$ is said to be determinantal with symmetric, positive definite kernel $K:S\times S\to \C$ if for any $k\ge 1$ and $x_1,\ldots,x_k\in S,$ we have 
$$\P(x_1,\dots,x_k\in \X)=det[(K(x_i,x_j))_{1\leq i,j,\leq k}].$$}

Let us recall our example \eqref{twoplayer}. Observe that it is a determinantal process with the kernel $K(x,y)=p\delta_{x=y}.$ 

Conversely, let $S=\{1,2\}$ be a set with two elements. Let $K$ be a symmetric matrix $K=\begin{bmatrix}
a & b\\
b & c
\end{bmatrix}$. Define a determinantal process $\X$ on $E$ by declaring $\P(1\in \X)=a, \P(2\in \X)=c, \P(1,2\in \X)=ac-b^2.$ It is easy to verify using inclusion exclusion principle that it defines a probability measure on all subsets of $S$ provided, of course, $1\geq a,c, ac-b^2\geq 0.$ This last condition is fulfilled if we assume that $K$ is positive semi-definite and $K\le I_2$, that is $I_2-K$ is positive semi-definite. 

We remind our readers that for in the above setting the $\P(x_1,\dots,x_k\in \X)$ is nothing but the $k$-point correlation function $p_k$ of the process $\X$. Therefore, the above definition is a mere translation of the definition given in the previous section. Observe that in discrete setting, it is very easy to compute the probabilities of the form $\P(x_1,\ldots, x_k\in \X).$ It would be nice to obtain a similar formula for, say, $\P(x_1,\ldots, x_k\notin \X).$ Indeed, this can be written entirely in terms of the kernel of the process. The following result gives a way to calculate the probabilities of the events like $\P(x_1,\ldots, x_k\in X, x_{k+1},\ldots, x_m\notin X).$

\begin{proposition}
Let $X$ be a determinantal process on an at most countable set $S$ with the kernel $K:S\times S\to \C.$
$$\P(x_1,\ldots, x_k\in X, x_{k+1},\ldots, x_m\notin X)=\det(\tilde{K}_{k,m}(x_i,x_j))_{1\le i,j\le m},$$
where $\tilde{K}_{k,m}(x_i,x_j)=\left\{\begin{array}{cccccc}

K(x_i,x_j), &  i\le k\\
\delta_{i,j}-K(x_i,x_j), &  i\ge k+1
\end{array}.
\right.$
\label{probability_complement}
\end{proposition}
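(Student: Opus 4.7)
The plan is to expand $\det(\tilde K_{k,m})$ by multilinearity in its last $m-k$ rows and to expand the left-hand probability by inclusion-exclusion over the complemented events, then to observe that both produce the same alternating sum of principal submatrix determinants of $K$. The only input I will need is the defining property $\P(\{y_1,\ldots,y_r\}\subseteq X)=\det(K(y_i,y_j))_{1\le i,j\le r}$ for a determinantal process on a discrete space.

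Writing each row $i\ge k+1$ of $\tilde K_{k,m}$ as $e_i - K(x_i,\cdot)$, where $e_i$ is the unit row vector with a $1$ in column $i$, multilinearity gives
\[
\det(\tilde K_{k,m})=\sum_{T\subseteq\{k+1,\ldots,m\}}(-1)^{|T|}\det(M_T),
\]
where $M_T$ has row $K(x_i,\cdot)$ for $i\in\{1,\ldots,k\}\cup T$ and row $e_i$ for $i\in T^c:=\{k+1,\ldots,m\}\setminus T$. To identify each $\det(M_T)$, I would apply the same permutation to rows and columns of $M_T$ so that indices in $\{1,\ldots,k\}\cup T$ come first; since the row and column sign contributions cancel, the determinant is unchanged. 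The resulting matrix has the block form
\[
\begin{pmatrix} \bigl(K(x_i,x_j)\bigr)_{i,j\in\{1,\ldots,k\}\cup T} & \ast \\ 0 & I_{|T^c|} \end{pmatrix},
\]
the bottom-left being zero and the bottom-right being the identity because each $e_i$ (for $i\in T^c$) has its single nonzero entry in column $i\in T^c$. Hence $\det(M_T)=\P\bigl(x_i\in X\text{ for all }i\in\{1,\ldots,k\}\cup T\bigr)$ by the defining property of $X$. On the other hand, with $A_i:=\{x_i\in X\}$ and $B:=A_1\cap\cdots\cap A_k$, a direct inclusion-exclusion on the complemented events gives
\[
\P(B\cap A_{k+1}^c\cap\cdots\cap A_m^c)=\sum_{T\subseteq\{k+1,\ldots,m\}}(-1)^{|T|}\P\Bigl(B\cap\bigcap_{i\in T}A_i\Bigr),
\]
which matches the multilinear expansion term by term and proves the claim.

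The main obstacle, such as it is, is pure bookkeeping: verifying that a simultaneous row/column permutation preserves the determinant (this follows from $\operatorname{sgn}(\sigma)^2=1$, i.e.\ conjugation by a permutation matrix) and that after the rearrangement $M_T$ is genuinely block upper triangular with the identity in its bottom-right corner. Neither step is conceptually deep, but both must be handled cleanly so that no stray signs intrude into the termwise matching with inclusion-exclusion.
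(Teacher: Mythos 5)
Your proof is correct, and it takes a somewhat different route than the paper. The paper argues by induction on $m-k$: it peels off one complemented event at a time via the identity $\P(B\cap A_{k+1}^c\cap\cdots\cap A_m^c)=\P(B\cap A_{k+2}^c\cap\cdots\cap A_m^c)-\P(B\cap A_{k+1}\cap A_{k+2}^c\cap\cdots\cap A_m^c)$, and in parallel uses multilinearity of the determinant in the $(k+1)$-th row alone to obtain $\det(\tilde K_{k,m})=\det(\tilde K_{k+1,m})+\det(\tilde L)$, where $\tilde L$ is the matrix with that row replaced by the unit vector $e_{k+1}$ (and one checks that $\det(\tilde L)$ equals the reduced-size determinant supplied by the induction hypothesis). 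Your argument performs the full unwinding in one shot: multilinearity over all $m-k$ bottom rows yields a $2^{m-k}$-term alternating sum of determinants indexed by $T\subseteq\{k+1,\ldots,m\}$, inclusion-exclusion yields the matching alternating sum of probabilities, and the two are identified termwise via $\det(M_T)=\P\bigl(\{x_i:i\in\{1,\ldots,k\}\cup T\}\subseteq X\bigr)$. The underlying engine is the same in both---multilinearity of the determinant paired with $\mathbf{1}_{A^c}=1-\mathbf{1}_{A}$---but your organization avoids induction and makes the combinatorics explicit, at the cost of having to justify the simultaneous row/column permutation and the block upper-triangular structure of $M_T$, which you do handle correctly. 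The paper's inductive version trades that bookkeeping for a one-row-at-a-time argument.
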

\begin{proof}
	The proof follows from the induction on $m-k.$ When $m-k=0,$ it is just the definition. Now observe that for $m-k\ge 1,$
	\begin{align*}
	&\P(x_1,\ldots, x_k\in X, x_{k+1},\ldots x_m\notin X)=\\
	&\P(x_1,\ldots, x_k\in X, x_{k+2}, \ldots, x_m\notin X)-\P(x_1,\ldots, x_{k+1}\in X, x_{k+2}, \ldots, x_m\notin X).
	\end{align*}
By induction, we have that 
\begin{equation}
\label{firsteq}
\P(x_1,\ldots, x_{k+1}\in X, x_{k+2}, \ldots, x_m\notin X)=\det\l(\tilde{K}_{k+1,m}(x_i,x_j)_{1\le i, j\le m}\r).
\end{equation}
For the sake of notational simplicty, we will write the above matrix $\tilde{K}_{k+1,m}$ as $K_1.$
And, similarly 
\begin{equation}
\label{secondeq}
\P(x_1,\ldots, x_{k}\in X, x_{k+2}, \ldots, x_m\notin X)= \det\l(\tilde{K}_{k,m-1}(x_i,x_j)_{\stackrel{1\le i, j\le m}{i,j \neq k+1}}\r).
\end{equation}
We now observe that $\det\l(\tilde{K}_{k,m-1}(x_i,x_j)_{\stackrel{1\le i, j\le m}{i,j \neq k+1}}\r)= \det\l(\tilde{L}(x_i,x_j)_{1\le i, j\le m}\r)$ where $L$ is an $m\times m$ matrix, whose $k$th row is $(\delta_{i,k})_{i=1}^m$ and all other rows are same as in $K_1.$ Now, observe that the matrix $L$ and $K_1$ have exactly the entries except in $k$-th row. Using multilinearity of the determinant, therefore, we can write that 
\[\det\l(\tilde{K}(x_i,x_j)_{1\le i, j\le m}\r)+\det\l(\tilde{L}(x_i,x_j)_{1\le i, j\le m}\r) = \det(\tilde{K}_{k,m}(x_i,x_j))_{1\le i, j\le m},\]
which proves the desired claim.
\end{proof}
\begin{corollary}
	Let $X$ be a determinantal process on an at most countable set $S,$ with the kernel $K.$ Then $$\P(x_1,\ldots, x_k\notin X)=\det(I_k-K(x_i,x_j))_{1\le i,j\le k}.$$
\end{corollary}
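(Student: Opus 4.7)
The corollary is essentially the $k=0$ case (with $m$ replaced by $k$) of Proposition \ref{probability_complement}, so my plan is simply to invoke that proposition with no conditioning on points being \emph{in} $X$. Specifically, setting the parameter ``$k$'' of the proposition to $0$ and its ``$m$'' to $k$, every index $i$ satisfies $i \ge 0 + 1$, so the kernel $\tilde K_{0,k}$ reduces to
\[
\tilde K_{0,k}(x_i, x_j) \;=\; \delta_{i,j} - K(x_i, x_j)
\]
for all $1 \le i,j \le k$. Plugging this into the proposition yields the claimed identity immediately.

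If one prefers an independent derivation rather than appealing to the proposition, the plan would be an inclusion--exclusion on $\{x_1,\ldots,x_k\}$:
\[
\P(x_1,\ldots,x_k \notin X) \;=\; \sum_{A \subseteq \{1,\ldots,k\}} (-1)^{|A|}\, \P(x_i \in X \text{ for all } i \in A),
\]
and each term on the right equals $\det(K(x_i,x_j))_{i,j \in A}$ by the definition of a determinantal process. The right-hand side is then recognized as the expansion of $\det(I_k - K(x_i,x_j))_{1 \le i,j \le k}$ obtained by viewing $I_k - K$ as a sum of the identity and $-K$ and expanding the determinant by multilinearity row-by-row; this is precisely the combinatorial identity $\det(I - M) = \sum_{A} (-1)^{|A|} \det(M_{A,A})$ where $M_{A,A}$ is the principal submatrix indexed by $A$.

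Since Proposition \ref{probability_complement} has already been established, the first approach is much cleaner and is the one I would write: a single sentence specializing the parameters. There is no genuine obstacle here, the only minor bookkeeping point being to verify that the convention ``$i \le k$'' in the piecewise definition of $\tilde K_{k,m}$ is vacuous when the upper index is $0$, so that the second branch applies to all entries.
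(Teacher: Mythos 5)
Your first approach is exactly what the paper intends: the corollary carries no separate proof and is meant to be read as the specialization of Proposition \ref{probability_complement} to the case where no points are conditioned to lie in $X$ (its ``$k$'' set to $0$, its ``$m$'' set to $k$), so that every entry falls under the $\delta_{i,j} - K(x_i,x_j)$ branch. Your bookkeeping check that the first branch is vacuous when the lower parameter is $0$ is correct, and the inclusion--exclusion alternative you sketch is a valid independent route but unnecessary here.
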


We will now explore some examples of determinantal processes on discrete state space. Probably the most celebrated and interesting example of determinantal process in discrete setting is Uniform spanning tree on a finite graph. In the following section we will study this example in more detail.

\section{Uniform spanning tree}
Let $G=(V,E)$ be a finite, connected graph. Let $S_G$ be the set of spanning tress of $G.$ Observe that $S_G$ is non-empty finite set (The connectedness of the graph is assumed precisely for this purpose). Uniform measure on $S_G$ gives a point process on the set $E$ of the edges of the graph $G.$ A beautiful result due to Burton and Pemantle \cite{burton} states that this process $\TT$ is determinantal with some kernel $M.$ The Burton-Peamntale theorem gives a electric-network interpretation to the kernel and $M(e,f)$ can be given an electrical interpretation. Of course, there are other ways to interpret this kernel, for example as the hitting time of a symmetric random walk. This subject is vast and beautiful, and a wealth of material on this subject can be found in \cite{LyonsPeres}.   

\begin{theorem}[Burton, Pemantle 1994]
Let $G=(V,E)$ be a finite connected graph. Fix an arbitrary orientation of the edges of $G.$ Let $e_1,e_2...,e_k$ be some collection of edges in the graph $G,$ and let $\TT$ be a spanning tree of $G$ chosen uniformly at random from $S_G$. Then, 
$$\Pr[e_1,\ldots, e_k\in \TT]=\det(M(e_i,e_j)_{1\le i,j\le k}),$$
where $M(e_i,e_j)=$ amount of current flowing through the edge $e_j$ under potential applied on the $e_i$ so that net current in the circuit is $1$ unit.  
\label{bur}
\end{theorem}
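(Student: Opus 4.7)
I would prove the identity by induction on $k$, with the base case supplied by Kirchhoff's matrix--tree theorem and the inductive step driven by a Schur-complement identity for the transfer current matrix under edge contraction. The combinatorial skeleton is the well-known equality $\P[e_1,\ldots,e_k\in\TT]=\tau(G/\{e_1,\ldots,e_k\})/\tau(G)$, valid whenever the edges form a sub-forest (and both sides otherwise vanish, consistent with the right-hand side of the target formula having linearly dependent columns of the incidence matrix).

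\textbf{Base case.} For $k=1$, deletion--contraction at $e$ gives $\P[e\in\TT]=\tau(G/e)/\tau(G)$. Kirchhoff's matrix--tree theorem writes both $\tau(G)$ and $\tau(G/e)$ as determinants of reduced Laplacians, and a short Schur-complement calculation identifies the ratio with the effective resistance $R_{\mathrm{eff}}(e)$ between the endpoints of $e$. In a network of unit resistors driven by unit net current across $e$, Ohm's law then gives that the current along $e$ itself equals $R_{\mathrm{eff}}(e)$, which is by definition $M(e,e)$.

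\textbf{Inductive step.} Conditioning on $e_1\in\TT$, the law of $\TT\setminus\{e_1\}$ is the uniform spanning tree on the contracted graph $G/e_1$. The induction hypothesis applied to $G/e_1$ yields
\[
\P[e_2,\ldots,e_k\in\TT\mid e_1\in\TT]=\det\!\bigl(M^{G/e_1}(e_i,e_j)\bigr)_{2\le i,j\le k}.
\]
Multiplying by $\P[e_1\in\TT]=M(e_1,e_1)$ and invoking the classical determinantal identity $\det N = N_{11}\det(N/N_{11})$, the argument reduces to the key Schur-complement lemma
\[
M^{G/e}(f,g)=M(f,g)-\frac{M(f,e)\,M(e,g)}{M(e,e)}\quad\text{for edges }f,g\ne e,
\]
which says precisely that contracting a single edge implements a Schur complement at that edge on the transfer current matrix.

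\textbf{Proving the lemma and the main obstacle.} The lemma is the heart of the argument and can be approached in two ways. Algebraically, write $M=B_0^{T}L_0^{-1}B_0$ where $B_0$ is the reduced signed incidence matrix and $L_0=B_0B_0^{T}$ is the reduced graph Laplacian; contracting $e$ simultaneously deletes one column of $B_0$ and merges two rows, and the resulting projection must be rearranged into the Schur complement of $M$ at index $e$. Electrically, to compute the current through $g$ in $G/e$ under unit injection at $f$, one superposes onto the unit-at-$f$ experiment in $G$ a scalar multiple of the unit-at-$e$ experiment chosen to cancel the potential drop across $e$; the correct multiple is exactly $M(f,e)/M(e,e)$, and after the correction the currents are unchanged by short-circuiting $e$, which is what contraction means. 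The main obstacle is this lemma: the algebraic route is bookkeeping-heavy and requires identifying the rearranged projection with the Schur complement, while the electrical route relies on uniqueness of the harmonic extension (ensured by invertibility of $L_0$, hence by connectedness of $G$) together with a superposition argument that must be carried out cleanly.
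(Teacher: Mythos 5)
Your proposal is correct, but note that the paper itself does not actually prove Theorem~\ref{bur}: it explicitly defers to a Wilson's-algorithm proof in Lyons and Peres, and then, after Kirchhoff's theorem, offers only a remark that the uniform spanning tree measure has the form of a bi-orthogonal ensemble and is hence determinantal, without pinning down the kernel as the transfer current matrix. Your route is therefore genuinely different from both of the paper's: instead of Wilson's algorithm (which builds the tree from loop-erased random walks and derives the determinant from a clever cancellation) or the purely linear-algebraic bi-orthogonal-ensemble observation (which gives determinantality abstractly but leaves the electrical interpretation implicit), you run a deletion--contraction induction whose engine is the Schur-complement identity
\[
M^{G/e}(f,g)=M(f,g)-\frac{M(f,e)\,M(e,g)}{M(e,e)},
\]
paired with the elementary identity $\det N = N_{11}\det(N/N_{11})$. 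This has several attractions: it is self-contained once Kirchhoff's matrix--tree theorem and the effective-resistance formula $\P[e\in\TT]=R_{\mathrm{eff}}(e)=M(e,e)$ are in hand; it directly produces the electrical interpretation of the kernel, which the bi-orthogonal argument does not; and the superposition proof of the Schur-complement lemma (subtract the multiple $M(f,e)/M(e,e)$ of the unit-at-$e$ experiment so that the potential drop and current across $e$ both vanish, then note the resulting flow is harmonic on $G/e$) is clean and instructive. You correctly flag the degenerate cases (loops, and $\{e_1,\dots,e_k\}$ containing a cycle): in both, the columns of $M$ at those indices are linearly dependent because $M$ is the Gram matrix of the projections $P\chi_{e_i}$ onto the cut space, so the determinant vanishes to match the zero probability. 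The one thing to make explicit is that the induction must be stated and carried out for multigraphs, since contraction of $e_1$ can create parallel edges even when $G$ is simple; this costs nothing, as Kirchhoff and the electrical machinery apply verbatim. What Wilson's algorithm buys over your approach is a simultaneous exact sampling algorithm and a proof that generalizes smoothly to infinite graphs via exhaustions; what your approach buys is brevity, linear-algebraic transparency, and the explicit identification of contraction with Schur complementation, which is a useful structural fact in its own right.
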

We do not include the detailed proof of the above theorem here because it will take us too afar. We refer the reader to Chapter 4 of \cite{LyonsPeres} for a thorough discussion of the result and a proof of the theorem using Wilson's algorithm. The proof of the above theorem exploits the connection between spanning trees, random walks and electrical circuits which is interesting to say the least.

We record below the theorem of Kirchoff on number of spanning trees which is interesting in its own right. But more than that it provides an alternate proof of the Burton-Pemantle theorem.

\begin{theorem}[Kirchoff, 1867]
	Let $G=(V,E)$ be a finite graph. Equip the edges of $G$ with an arbitrary but fixed orientation. The vertex edge-incidence matrix $A_G$ of $G$ is a $V\times E$ matrix given by 
	$$A_G(v,e)=\left\{\begin{array}{ccccc}
	0, & \text{if e is not incidence on v}\\
	+1, & \text{if e starts at v}\\
	-1, & \text{if e ends at v} 
	\end{array}.\right.$$
Let $\tilde{A_G}$ be the matrix obtained by deleting the last row of the matrix $A_G.$ Then $N(G),$ the number of spanning tress of $G$, is given by 
$$N(G)=\det\l(\tilde{A}_{G}\tilde{A}_{G}^t\r).$$ 
\end{theorem}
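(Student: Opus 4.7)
The plan is to prove Kirchhoff's formula by combining the Cauchy--Binet formula with a combinatorial identification of which square submatrices of $\tilde{A}_G$ have nonzero determinant. Write $n=|V|$, so $\tilde{A}_G$ is an $(n-1)\times|E|$ matrix whose columns are indexed by the edges. Cauchy--Binet gives
$$\det\bigl(\tilde{A}_G\tilde{A}_G^t\bigr)=\sum_{\substack{S\subseteq E\\|S|=n-1}}\det\bigl(\tilde{A}_G[S]\bigr)^2,$$
where $\tilde{A}_G[S]$ is the $(n-1)\times(n-1)$ submatrix formed by the columns indexed by $S$. So it suffices to prove the local claim that $\det(\tilde{A}_G[S])\in\{\pm 1\}$ when $S$ is (the edge set of) a spanning tree and $\det(\tilde{A}_G[S])=0$ otherwise. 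Given the claim, the identity $N(G)=\det(\tilde{A}_G\tilde{A}_G^t)$ follows at once, because a subset of $n-1$ edges of a connected graph is either a spanning tree or contains a cycle.

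For the \emph{vanishing} half of the local claim, suppose $S$ contains a cycle $C$ and fix an orientation of $C$. For every edge $e\in C$ assign $\varepsilon_e=+1$ if the given (fixed) orientation of $e$ agrees with the walking direction around $C$, and $\varepsilon_e=-1$ otherwise. Then the signed sum $\sum_{e\in C}\varepsilon_e\cdot(\text{column of $\tilde{A}_G[S]$ for $e$})$ is the zero vector, since at every vertex of $C$ the two incident cycle-edges contribute with opposite signs; the deleted row plays no role because it was deleted. Thus the columns of $\tilde{A}_G[S]$ are linearly dependent and $\det(\tilde{A}_G[S])=0$.

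For the \emph{unimodularity} half, assume $S$ is a spanning tree; I would induct on $n$. A tree with $n\ge 2$ vertices has at least two leaves, so we may pick a leaf $v$ that is distinct from the deleted vertex $v_0$. Let $e$ be the unique edge of $S$ incident to $v$. Then the row of $\tilde{A}_G[S]$ corresponding to $v$ has exactly one nonzero entry, namely $\pm 1$ in the column of $e$. Expanding along that row yields
$$\det\bigl(\tilde{A}_G[S]\bigr)=\pm\det\bigl(\tilde{A}_{G'}[S\setminus\{e\}]\bigr),$$
where $G'=G\setminus\{v\}$ inherits the deleted vertex $v_0$ and $S\setminus\{e\}$ is a spanning tree of $G'$. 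By induction the right side is $\pm 1$, completing the proof.

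The main subtlety I anticipate is the bookkeeping in the inductive step: one must ensure that a leaf different from $v_0$ exists at every stage, and that deleting $v$ preserves both connectedness of the underlying graph substructure and the tree structure of $S\setminus\{e\}$. Both issues are handled by the standard fact that a tree has at least two leaves when it has at least two vertices, and that removing a leaf from a tree leaves a tree; everything else is routine cofactor-expansion.
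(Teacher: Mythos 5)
Your proof is correct and follows the same skeleton as the paper's: apply Cauchy--Binet, then establish the local claim that $\det(\tilde{A}_G[S])$ is $0$ if $S$ contains a cycle and $\pm 1$ if $S$ is a spanning tree. The cycle argument is identical. The one place where you diverge is the inductive step for the unimodularity half. The paper expands along a \emph{column}: it picks an edge $e$ of $S$ incident to the deleted vertex $v_0$, observes that this column has a single nonzero entry (at the other endpoint of $e$), expands, and passes to the contracted graph $G/e$, where $S\setminus\{e\}$ is again a spanning tree. You instead expand along a \emph{row}: you pick a leaf $v\ne v_0$ of the tree $S$, note that its row has a single $\pm 1$, expand, and pass to $G\setminus\{v\}$. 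Both mechanisms are sound and of comparable difficulty. Your version buys a slightly simpler bookkeeping of what the smaller matrix is (vertex deletion is easier to visualize than edge contraction), at the cost of needing the fact that a tree on at least two vertices has at least two leaves, so that a leaf distinct from $v_0$ always exists; the paper's version needs only the weaker fact that $v_0$ has degree at least one in $S$. This is a genuine but minor variation in the proof of the key lemma, not a different route to the theorem.
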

\begin{proof}
	The proof is actually quite simple. Recall that by Cauchy-Binet identity we have that 
	\ba
	\det\l(\tilde{A}_{G}\tilde{A}_{G}^t\r)&=\summ_{\stackrel{S\subseteq E:}{|S|=n-1}}\det(\tilde{A}_G(S))\det(\tilde{A}_G(S)^t)\\
	&= \summ_{\stackrel{S\subseteq E:}{|S|=n-1}}|\det(\tilde{A}_G(S))|^2 ,
	\ea
	where $\tilde{A}_G(S)$ is the submatrix of $\tilde{A}_G$ obtained by selecting only columns indexed by elements in $S$ (keeping them in the same order as in the original matrix). 
	
We now have to observe that if the edges indexed by $S$ contain a cycle in $G,$ then there exists $\epsilon_e\in \{0,+1,-1\}$ such that $\summ_{e\in S}\epsilon_eC_e=0$ where $C_e$ is the column in matrix $\tilde{A}_G$ indexed by $e.$ It follows therefore that if the subgraph induced by $S$ contains a cycle then, $\det(\tilde{A}_G(S))=0.$ Note that if it does not contain a cycle then it has to be spanning tree. Therefore, suffices to prove that the $\det(\tilde{A}_G(S))=\pm 1,$ if $S$ does not induce any cycle. 
	
This claim can be proved using induction. Let us call the edge $e$ which was connected was to the vertex which has been deleted in $\tilde{A}_G.$ In the column indexed by $e$, there is exactly one non-zero entry which is $\pm 1.$ Expand the determinant along that column to get $\det(\tilde{A}_G(S))=\pm \det(B).$ But observe that $B$ is the edge-incidence matrix of the graph obtained by shrinking both ends of $e$ to one vertex. If $S$ induced a spanning tree on $G,$ then $S\setminus \{e\}$ induces a spanning tree on this reduced graph $G/e.$ Therefore, it follows inductively that $\det(\tilde{A}_G(S)=\pm 1.$ (Of course, the base case when $|S|=1$ is trivial.)

\end{proof}
\remark{
Note that it is hidden in the proof that for a subset $P\subseteq E$ such that $|P|=n-1,$ the $\det(\tilde{A}_G(P)\tilde{A}_G^t(P))=1$ if the edge set $P$ gives a spanning tree of $G$ and $0$ otherwise. The probability measure on $2^E$ given by \[\Pr(P)=\frac{\det(\tilde{A}_G(P)\tilde{A}_G^t(P))}{\det(\tilde{A}_{G}\tilde{A}_G^t)},\] if $|P|=n-1$ and $0$ otherwise, is uniform on $S_G.$ It follows from our previous discussion (on Bi-orthogonal ensemble) that the measure $\Pr$ is determinantal. Therefore, the uniform measure on $S_G$ is determinantal.}

We will now study the uniform spanning tree on $K_n,$ complete graph on $n$-vertices, in some detail. One can use Kirchoff's theorem to see that there are $n^{n-2}$ spanning trees of $K_n$. We will not use this directly and therefore we do not bother ourselves with this computation. We instead use Burton-Pemantle theorem to compute various statistics. In order to apply the Burton-Pemantle theorem, we need to compute the matrix $M$ in the theorem (which is also called transfer-current matrix). For an arbitrary graph computing the transfer current matrix may not be an easy task, but for a complete graph it can be done. 

We first note that if we fix $e,f\in K_n$ and apply battery across $e$ so that the net current from one end of the edge $e$ to the other end is $1$ unit. Then, due to symmetry of the network it is clear that if $I_e$ current passes through the edge $e$ then exactly $I_e/2$ current passes through each of remaining edges which emanate from the same vertex as $e$ and also if there are is an edge which does not meet $e$ then current through that edge must be zero. A simple algebra (and Kirchoff's node law from Physics) therefore tells us that 
$$I_e+ (n-2)I_e/2=1\implies I_e= \frac{2}{n}.$$
Therefore, the matrix $M$ can be defined as $M(e,e)=\frac{2}{n},$ and $M(e,f)=\frac{1}{n}$ if $e \neq f$ but $e$ and $f$ originate from the same vertex, (of course due to symmetry this would imply that if $M(e,f)=\frac{1}{n}$ if $e\neq f$ but $e$ and $f$ end at the same vertex and the sign of the current is reversed if one of them starts at a vertex where the other ends), and $0$ otherwise. We note it below for the record that
\ba
M(e,f)=\left\{
\begin{array}{ccccccccc}
	\frac{2}{n}, & e=f\\
	\frac{1}{n}, & e=\overrightarrow{xy}, f=\overrightarrow{xz}\hspace{1mm}\text{or}\hspace{1mm}\overrightarrow{zy}\\
	\frac{-1}{n}& e=\overrightarrow{xy}, f=\overrightarrow{zx}\hspace{1mm}\text{or}\hspace{1mm}\overrightarrow{yz}\\
	0, &\text{otherwise}
\end{array}.
\right.
\ea
\begin{example}

Let us now compute the probability that the graph distance between two vertices in $\TT$ is $k.$ Choose two vertices of $K_n$ uniformly at random. For the sake of simplicity (of notations) we will call the vertex $v_1$ and $v_2.$ It is evident from the symmetry of $K_n$ that it does not matter which two vertex we choose. To motivate the upcoming computations let us begin with the case $k=1.$ Note that $d_{\TT}(v_1,v_2)=1$ if and only if the edge connecting the two vertices, say $e_{12},$ is contained in $\TT.$ Therefore, \[\Pr\{d_{\TT}(v_1,v_2)=1\}=\Pr\{e_{12}\in \TT\}=\frac{2}{n}.\] Similarly, $d_{\TT}(v_1,v_2)=k$ if and only if there are $(k-1)$ vertices $w_1,\ldots,w_{k-1}$ such that the edges \[e_{v_1w_1},e_{w_iw_{i+1}}, e_{w_{k-1}v_2}\in \TT,\] for all $i=1,2\ldots, k-2.$ Also, observe that once the vertices are chosen, choosing the corresponding edges amounts to fixing a permutation of the choosen $(k-1)$ vertices, and due to the uniqueness of paths between two vertices of a tree, it follows that each permutation gives rise to a distinct event. With all these observations we are left with simple algebra which gives us that 
\begin{equation}
\Pr\{d_{\TT}(v_1,v_2)=k\}={n-2 \choose k-1}(k-1)!\Pr\{e_{v_1w_1},e_{w_iw_{i+1}}, e_{w_{k-1}v_2}\in \TT\}.
\label{dist_k}
\end{equation}

We will make a slight detour to compute $\Pr\{e_{v_1w_1},e_{w_iw_{i+1}}, e_{w_{k-1}v_2}\in \TT\}.$ To this end, we invoke the theorem \ref{bur} to obtain that 
\begin{align*}
\Pr\{e_{v_1w_1},e_{w_iw_{i+1}}, e_{w_{k-1}v_2}\in \TT\}&= \det(M(e_1,\ldots, e_k))\\
&= \det\l(\begin{matrix}
\frac{2}{n} & \frac{-1}{n} & 0 & \ldots & 0\\
\frac{-1}{n} & \frac{2}{n} & \frac{1}{n} & \ldots & 0\\
\vdots & \vdots & \vdots & \vdots & \vdots \\
0 & \ldots & \frac{-1}{n} & \frac{2}{n} & \frac{-1}{n}\\
0 & 0 & \ldots  & \frac{-1}{n} & \frac{2}{n}\\
\end{matrix}\r) \\
&= \frac{1}{n^k}  \det\l(\begin{matrix}
2 & -1 & 0 & \ldots & 0\\
-1 & 2 & -1 & \ldots & 0\\
\vdots & \vdots & \vdots & \vdots & \vdots \\
0 & \ldots & -1 & 2 & -1\\
0 & 0 & \ldots  & -1 & 2\\
\end{matrix}\r)\\
&= \frac{k+1}{n^k}.
\end{align*}
From \eqref{dist_k} and the above computation it follows that 
\begin{equation}
\Pr\{d_{\TT}(v_1,v_2)=k\} = \frac{k+1}{n}\prod\limits_{1\le i\le k-1}\l(1-\frac{i+1}{n}\r).
\label{distance_k_formula}
\end{equation}
\end{example}

The above examples suggest us that we should scale the the edge of the spanning tree by $n^{-\frac{1}{2}}$ as see the limit. Indeed observe that for $k=x\sqrt{n}$ we have that 
$$\prodd_{i=1}^{k}\l(1-\frac{i}{n}\r)\approx e^{-x^2/2}.$$
We will elucidate upon the idea here because, it would be important later. Note that $$\Pr\{d_{\TT}(v_1,v_2)=k\} = \frac{k+1}{n-1}\frac{n-k}{n}\frac{(n)_k}{n^k}.$$
We recall that for $0<t<1,$ we have that $e^{-t/(1-t)}<1-t<e^{-t}.$ And therefore,$$e^{-\frac{k^2}{2(n-k)}}<\frac{(n)_k}{n^k}<e^{-\frac{1}{n}{k\choose 2}}.$$
A tedious but straightforward calculus therefore yields that for \[n^{-\frac{1}{2}-\epsilon}<k<1+n^{\frac{1}{2}+\epsilon}\] we get have
\[\Pr\{d_{\TT}(v_1,v_2)= k\}=\frac{k}{n}e^{-k^2/2n}+O(n^{-1+\epsilon}).\]
Which with a little more involved calculus shows that 
$$\Pr\{\frac{1}{\sqrt{n}}d_{\TT}(v_1,v_2)\le x\}=1-e^{-x^2/2}+o(1)\hspace{3mm}\text{as}\hspace{2mm}n\to \infty.$$
In other words this shows that
$$\frac{1}{\sqrt{n}}d_{\TT}(v_1,v_2)\stackrel{d}{\to}R,$$
where $R$ is the Rayleigh random variable, that is a random variable with density given by $xe^{-x^2/2}$ on $\R_{+}$. 

\begin{example}
We can, as in the above example, choose $k$ vertices from $K_n$ uniformly at random. We are interested in understanding how does the tree spanned by $k$ randomly chosen vertices look like in $\TT$? So let us first fix a shape $\mathbf{t}$ such that $\mathbf{t}$ has exactly $k$-leaves and $2k-2$ vertices and therefore $2k-3$ legs (say $L_1,\ldots, L_{2k-3}$ in some arbitrary but fixed order). We ask for the probability that random chosen vertices $v_1,\ldots, v_k$ span a tree with shape $\mathbf{t}$ and $L_i=m_i$ for $i=1,\ldots, 2k-3.$
	
We will do as we did in the previous example. First write a tree with $k$-leaves with leaves labelled $v_1,\ldots,v_k.$ Now first choose $k-2$ nodes or hubs from $n-k$ vertices and put arrange them in some order, thereafter make the skeleton/shape $\mathbf{t}$ and put $m_i-1$ dots on leg $L_i.$ Choose $\summ_{i=1}^{2k-3}(m_i-1)=m-2k+3$ vertices from remaining and arrange them on dots marked on the legs. This will fix the edges $e_1,\ldots, e_m$ and we need to compute the probability that $\TT$ contains all these edges. 
	
One can inductively keep reducing the length of a leg and finally reduce to a tree with fewer legs, to get a recurrence relation for the determinant. It turns out that the determinant in this case is $\frac{m+1}{n^m}$, where $m=\summ_{i=1}^{2k-3}m_i.$

Combining all this one can get that 
	$$\Pr\{\mathbf{t};L_1=m_1,\ldots, L_{2k-3}=m_{2k-3}\}=\frac{(n-k)!}{(n-m-1)!}\frac{m+1}{n^m}.$$
Note the similarity of this probability with what we obtained in the previous examples. Indeed one can show that if we scale all the edge-lengths by $n^{-\frac{1}{2}}$, this joint distribution converges to the following density 
	$$f(\mathbf{t};x_1,\ldots,x_{2k-3})=\left(\summ_{i=1}^{2k-3}x_i\right)\exp\left(-\frac{1}{2}\left(\summ_{i=1}^{2k-3}x_i\right)^2\right).$$  
\end{example}
We must note that the density $f(\mathbf{t};x_1,\ldots,x_{2k-3})$ obtained above is the finite dimensional distribution of the Brownian continuum random tree (Brownian CRT). In a series of papers (see \cite{CRTI}, \cite{CRTII}, \cite{CRTIII}) Aldous developed a general theory of continuum random trees. A beautiful overview of CRT can be found in \cite{CRTII}. The above results are already contained in $\cite{CRTII}.$ Similar results can also be obtained for other class of random graphs as already shown in $\cite{CRTII}.$ Usually these results are obtained by random walk algorithms (for example Wilson algorithm or Aldous-Broder Algorithm), but here we use the determinantal formulas to obtain the same results.

One can also  analyze the degree of a vertex in uniform spanning tree on $K_n.$ Due to symmetry it does not matter which vertex do we choose. We will fix a vertex and call it $v.$ Note that the degree of a vertex in $\TT$ can not be 0. Once again we will motivate the upcoming computations by doing a simple case first. Let us try to compute the probability that degree of the vertex $v$ is $1$ in $\TT.$ 
\begin{example}

Note that there are $(n-1)$ edges starting at the vertex $v,$ and the degree of $v$ would be equal to $1$ if and only if exactly one of these edges belong to $\TT$ and remaining $(n-2)$ do not. Let us call these edges to be $e_1,\ldots, e_{n-1},$ and compute the $\Pr\{e_1\in \TT, e_2,\ldots, e_{n-1}\notin \TT\}.$ Observe that
	$$\Pr\{\mbox{degree}(v)=1\}= (n-1)\Pr\{e_1\in \TT, e_2,\ldots, e_{n-1}\notin \TT\}.$$
	
In order to compute the required probability, we first note that \[\Pr\{e_1\in \TT, e_2,\ldots, e_{n-1}\notin \TT\}= \Pr\{e_2,\ldots,e_{n-1}\notin\TT \}.\] This follows since we know that there has to be at least edge which connects the vertex $v$ in $\TT$. This will help us simplify some computations. We invoke the corollary to the theorem \ref{probability_complement} to compute the $\Pr\{e_2,\ldots,e_{n-1}\notin\TT \}.$
	\begin{align*}
	\Pr\{e_1\in \TT, e_2,\ldots, e_{n-1}\notin \TT\}&=\Pr\{e_2,\ldots,e_{n-1}\notin\TT \}\\
	&= \det\l( 
	\begin{matrix}
	\frac{n-2}{n} & \frac{-1}{n} & \ldots & \frac{-1}{n}\\
	\frac{-1}{n} & \frac{n-2}{n} & \ldots &  \frac{-1}{n}\\
	\vdots & \vdots & \vdots & \vdots \\
	\frac{-1}{n} & \frac{-1}{n} & \ldots &  \frac{n-2}{n}
	\end{matrix}
	\r)\\
	&= \frac{1}{n^{n-2}}(n-1)^{n-3}.
	\end{align*}
As we have already observed that \[\Pr\{\mbox{degree}(v)=1\}= (n-1)\Pr\{e_1\in \TT, e_2,\ldots, e_{n-1}\notin \TT\},\] it follows that 
	\[\Pr\{\mbox{degree}(v)=1\} = (n+1)\frac{1}{n}\l(1-\frac{1}{n}\r)^{n-3}\to e^{-1}\hspace{1mm}\text{as}\hspace{1mm} n\to \infty.\]
\end{example}

More generally one can show that $\mbox{degree}(v)\to 1+\Poi(1).$ To this end, let us fix a vertex $v$ and edges as in the previous example and denote by $I_j$ the indicator function of the edge $e_j$ in $\TT.$ Note that 
\begin{align*}
\EE{\mbox{degree}(v)_{(k)}}&:= \EE{\mbox{degree}(v)(\mbox{degree}(v-1))\ldots (\mbox{degree}(v)-k+1)}\\
&= \sum\limits_{i_1,...,i_k \text{distinct}}\EE{\prod\limits_{1\le j\le k}I_{i_j}}.
\end{align*}
The key thing to note here is that $\EE{\prod\limits_{1\le j\le k}I_{i_j}} = \Pr\{e_{i_1}, \ldots, e_{i_k}\in \TT\}$. And, thanks to theorem \ref{bur} computing this probability is very straightforward for the complete graphs. 

\begin{example}
In this example we continue the discussion in the previous paragraph and calculate the $\EE{\mbox{degree}(v)_{(k)}}.$ We first recall that 
$$\Pr\{e_{i_1}, \ldots, e_{i_k}\in \TT\}= \frac{k+1}{n^k}.$$
Observe that this probability is independent of the precise $k$-tuple chosen to compute the probability. And, therefore all we need to do is to multiply it by all possible $k$-tuples of edges chosen from the total of $(n-1)$ edges incident at the vertex $v.$ In the light of above discussion therefore we obtain
\begin{align*}
\EE{\mbox{degree}(v)_{(k)}} &= {n-1 \choose k}k!\frac{k+1}{n^k}\\
&= (n-1)(n-2)\ldots (n-k)\frac{k+1}{n^k}\\
&= (k+1)\prod_{i=1}^{k}\l(1-\frac{i}{n}\r).
\end{align*}
Observe that $\EE{\mbox{degree}(v)_{(k)}}\to (k+1)$ as $n\to \infty.$ Observe that it is also the factorial moment for $(1+\mbox{Poisson}(1))$ random variable. It follows that $\mbox{degree}(v)\to 1+\Poi(1)$ in distribution.
\end{example}
Recall that a vertex with degree $1$ is called a leaf. In the above example we have computed that $\Pr(\mbox{degree}(v)=1)=(1-\frac{1}{n})^{n-2}.$ With this we can try and estimate the number of leaves in $\TT.$ 
\beg 
Let $I_v$ denote the indicator function of the event that the vertex $v$ is a leaf. Clearly, $\EE{\text{no. of leaves in }\TT}=\summ_{v\in K_n}\EE{I_v}=n(1-\frac{1}{n})^{n-2}.$ Evidently, we obtain 
\[\EE{\frac{\text{no. of leaves}}{n}}\to e^{-1}.\]
That is a positive fraction of the vertices are leaves. In fact, we can do better by observing that 
\[\mbox{Var}\l(\frac{1}{n}\summ_{v\in K_n}I_v\r)=\frac{1}{n}\mbox{Var}(I_v)+\frac{n-1}{n}\mbox{Cov}(I_u,I_v).\]
$I_v$ is a Bernoulli random variable therefore the $\mbox{Var}(I_v)=\Pr(\text{v is a leaf})(1-\Pr(\text{v is a leaf}))\to (1-e^{-1})e^{-1}.$ Also note that $$\mbox{Cov}(I_v,I_u)=\l(1-\frac{2}{n}\r)^{n-2}-\l(1-\frac{1}{n}\r)^{2(n-2)}.$$
It therefore follows that $\mbox{Var}\l(\frac{1}{n}\summ_{v\in K_n}I_v\r)\to 0$ as $n\to \infty.$ Applying Markov's inequality we get that $$\frac{\text{no. of leaves}}{n}\stackrel{P}{\to} e^{-1}.$$
\eeg

\chapter{Stochastic domination}
As already remarked in the previous chapters, determinantal processes exhibit some stochastic domination. In this chapter we shall explore some results on stochastic domination in such processes and also see some applications.  

\section{Stochastic domination for finite rank projections}
In order to make this chapter largely self contained, we will recall some basic notions already introduced in previous chapter. Let $(E,\FF,\mu)$ be a measure space and let $K(x,y)=\sum_{k=1}^n\phi_k(x)\bar{\phi_k}(y)$ where $\{\phi_1,\ldots ,\phi_n\}$ is an orthonormal set in $L^2(E,\mu)$. Let $(X_1,\ldots ,X_n)$ is a random tuple in $E^n$ having density $f(x_1,\ldots ,x_n)=\frac{1}{n!}\det(K(x_i,x_j))_{i,j\le n}$ with respect to $\mu^{\otimes n}$. The point process (meaning, a random integer-valued measure) $\XX :=\del_{X_1}+\ldots +\del_{X_n}$ is a determinantal point process with kernel $K$ (w.r.t. the measure $\mu$).

Our goal in this chapter is to compare two such processes whose kernels are given by $K_1(x,y):=\summ_{i=1}^{n}\phi_i(x)\overline{\phi}_i(y)$ and $K_2(x,y):=\summ_{i=1}^{n+1}\phi_i(x)\overline{\phi}_i(y)$ respectively. Evidently, the law of these processes, say $\P_1$ and $\P_2$ respectively, are probability measures on $\mathcal{M}(E)$ the space of non-negative integer valued Radon measures on $E$. The space $\mathcal{M}(E)$ is a partially ordered set and being a locally compact Polish space it is also equipped with a natural Borel sigma-algebra. As we have already defined in the first chapter, a measurable subset $\AA$ of $\mathcal{M}(E)$ is said to be increasing if whenever $\theta_1\in \AA$ and $\theta_2$ is another non-negative integer valued radon measure on $(E,\FF)$ such that $\theta_1(A)\le \theta_2(A)$ for all $A\in \FF$, then $\theta_2\in \AA$ .
If $\XX=\del_{X_1}+\ldots +\del_{X_n}$ and $\YY=\del_{Y_1}+\ldots +\del_{Y_m}$ are two point processes on $E$, we say that $\XX$ is stochastically dominated by $\YY$ if $\P\{\XX\in \AA\}\le \P\{\YY\in \AA\}$ for any increasing set $\AA$.
\begin{theorem}\label{thm:determinantaldomination} Let $\XX_1$ and $\XX_2$ be  determinantal point processes on $(X,\mu)$ with finite  kernels $K_1(x,y)=\sum_{k=1}^n\phi_k(x)\bar{\phi_k}(y)$ and $K_2(x,y)=\sum_{k=1}^{n+1}\phi_k(x)\bar{\phi_k}(y)$, where $\phi_1,\ldots ,\phi_{n+1}$ is an orthonormal set in $L^2(E,\mu)$. Then, $\XX_1$ is stochastically dominated by $\XX_2$.
\end{theorem}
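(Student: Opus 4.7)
The plan is to prove stochastic domination by exhibiting an explicit coupling of $\XX_1$ and $\XX_2$ on a common probability space such that $\XX_1 \subseteq \XX_2$ holds almost surely (as finite configurations in $E$). Strassen's theorem (Theorem \ref{StraTheo}) then immediately yields $\XX_1 \prec \XX_2$ for every increasing measurable event. To set up, I would invoke Lemma \ref{finiteprojectiondpp} to identify $\XX_1$ with an ordered $n$-tuple of density $f_1(x) = \frac{1}{n!}|\det[\phi_k(x_i)]_{i, k \le n}|^2$ and $\XX_2$ with an ordered $(n+1)$-tuple of density $f_2(y) = \frac{1}{(n+1)!}|\det[\phi_k(y_i)]_{i, k \le n+1}|^2$.

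The coupling is: first sample $(Y_1, \ldots, Y_{n+1}) \sim f_2$, then conditional on this sample, choose a random index $I \in \{1, \ldots, n+1\}$ to drop, with probabilities $q_I = q_I(Y_1, \ldots, Y_{n+1})$ to be specified, and output $\XX_1^* := \{Y_j : j \ne I\}$. By construction $\XX_1^* \subseteq \XX_2$ pointwise, so the task is reduced to choosing $q_I$ so that the marginal law of $\XX_1^*$ coincides with $f_1$. The relevant linear-algebraic objects are the $n\times n$ minors $M_{I,k}$ of the $(n+1)\times(n+1)$ matrix $A := [\phi_k(Y_i)]$ obtained by deleting row $I$ and column $k$. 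Note that the Janossy density of $\XX_1$ at $\{Y_j\}_{j \ne I}$ equals $|M_{I, n+1}|^2$, and Cauchy–Binet gives $\det[K_2(Y_j,Y_{j'})]_{j,j' \ne I} = \sum_{k=1}^{n+1}|M_{I,k}|^2$.

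The central computation uses the Laplace expansion of $\det A$ along the row corresponding to $Y_{n+1} = t$, namely $\det A = \sum_k (-1)^{n+1+k}\phi_k(t)\,M_{n+1,k}$, combined with the orthonormality relation $\int \phi_k \overline{\phi_l}\, d\mu = \delta_{kl}$. This collapses cross-terms and produces the integration identity
\[
\int |\det A(y_1, \ldots, y_n, t)|^2\, d\mu(t) = \sum_{k=1}^{n+1}|M_{n+1,k}(y_1, \ldots, y_n)|^2,
\]
and analogous identities with the role of $Y_{n+1}$ played by any other coordinate. This identity is the mechanism by which, after integrating out the dropped coordinate against $q_I$, the resulting marginal collapses to $f_1$ exactly; it replaces the exterior-algebra step of Lyons' original argument with a direct manipulation that is formally identical whether $\mu$ is counting measure on a discrete set or a Radon measure on a continuous state space.

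The main obstacle is reconciling two competing requirements on $q_I$: the sum $\sum_I q_I$ must equal $1$ identically on the support of $f_2$, while the integral $(n+1)\int q_{n+1}(y_1, \ldots, y_n, t)\, f_2(y_1, \ldots, y_n, t)\, d\mu(t)$ must equal $f_1(y_1, \ldots, y_n)$. The two most natural candidates — $q_I \propto |M_{I, n+1}|^2$, and $q_I \propto |M_{I, n+1}|^2 / \sum_k |M_{I, k}|^2$ — each satisfy one of the two constraints but can fail the other, as explicit low-dimensional computations reveal. The resolution I expect is to introduce a normalizing factor tying the $(n+1)$ ratios $\det[K_1]_n^{(I)} / \det[K_2]_n^{(I)}$ together via a nontrivial identity between the principal $n$-minors of $[K_1(Y_i,Y_j)]_{i,j \le n+1}$ and $[K_2(Y_i,Y_j)]_{i,j \le n+1}$; verifying that this identity holds on the support of $f_2$ as a consequence of orthonormality of $\{\phi_1, \ldots, \phi_{n+1}\}$ is the heart of the argument.
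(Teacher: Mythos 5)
Your proposal takes a genuinely different route from the paper. The paper never constructs a coupling; it bounds $\P_1(\AA)$ directly, writing $\det(Q_A)$ as a cofactor expansion of $\det(M_{A\cup\{x\}})$ against $\overline{\phi_{n+1}(x)}$ (Proposition~\ref{claim:detequality} / \ref{claim:detequalitygeneral}), applying Cauchy--Schwarz to get $\P_1(\AA)\le \sqrt{\P_2(\BB)}\cdot\sqrt{S}$ for an explicit quadratic form $S$, and then proving $S\le \P_1(\AA)$ by exhibiting the relevant matrix as a Gram matrix $XX^*$ (Lemma~\ref{positivitylemma} / Lemma~\ref{claim:detinequalitygeneral}). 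That chain yields $\P_1(\AA)\le \sqrt{\P_2(\BB)\,\P_1(\AA)}$ and hence $\P_1(\AA)\le \P_2(\BB)$ --- no coupling, no Strassen.

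There is, however, a genuine gap in your argument: the conditional drop probabilities $q_I$ are never produced. You correctly identify the two constraints they must satisfy ($\sum_I q_I\equiv 1$ on $\operatorname{supp}f_2$, and the integrated marginal must collapse to $f_1$), correctly observe that neither of the two natural candidates $q_I\propto |M_{I,n+1}|^2$ nor $q_I\propto |M_{I,n+1}|^2/\sum_k|M_{I,k}|^2$ satisfies both, and then defer the resolution to an unproved ``nontrivial identity between the principal $n$-minors.'' That identity \emph{is} the theorem: by Strassen, a choice of $q_I$ obeying both constraints is equivalent to the stochastic domination you set out to prove (the monotone coupling is forced to drop exactly one point since $|\XX_1|=n$ and $|\XX_2|=n+1$ a.s.), so postulating its existence is circular unless you actually display the formula and verify the two constraints. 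Until that formula is written down and checked against orthonormality, the proposal is a plausible program, not a proof. The identity you do prove, $\int|\det A(y_1,\dots,y_n,t)|^2\,d\mu(t)=\sum_{k=1}^{n+1}|M_{n+1,k}|^2$, is correct (Laplace along the $t$-row plus $\int\phi_k\bar\phi_l\,d\mu=\delta_{kl}$), but its right-hand side is $\det[K_2(y_i,y_j)]_{i,j\le n}$, not $|\det Q|^2$, so by itself it only recovers the known relation between the two $n$-point correlations; it does not determine $q_I$. One further caveat: the version of Strassen's theorem quoted in the paper (Theorem~\ref{StraTheo}) is stated for finite posets, so for the continuous case you would also need to invoke an appropriate extension of Strassen to the space $\mathcal{M}(E)$ of point configurations.
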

This theorem is due to Russell Lyons (see Theorem~6.2 and Theorem~7.1 in \cite{Lyons}) in the discrete case. There have been extensions of it in various ways, for example, \cite{goldman} and \cite{LyonsICM}, but the conditions there are restrictive. Our proof is essentially the same as that of Lyons, but written in such a way that the validity in the general situation is clear. The main difficulty in literally transcribing his proof is that $\del_x$ is not an element of $L^2(E,\mu)$ when $\mu$ is not discrete. By moving away from the exterior algebra language employed by Lyons, and writing everything in terms of determinants, this issue can be avoided.

In order to make the exposition clearer, we will first prove Theorem \ref{thm:determinantaldomination} in the discrete setting but the proof for general case is exactly the same with obvious modifications. 

\section{Stochastic domination: the discrete case}
\para{Discrete determinantal measures} Let $E=\{1,2,\ldots \}$ and let $\phi_{1},\ldots ,\phi_{n+1}$ be orthonormal in $\ell^{2}(E)$. The matrices
\ba
M=\l[ \begin{array}{cccc} 
	\phi_{1}(1) &\phi_{1}(2) & \ldots & \ldots \\
	\vdots & \vdots & \ldots & \ldots \\
	\phi_{n+1}(1) &\phi_{n+1}(2) & \ldots & \ldots
\end{array} \r] \;\;\; \mb{ and } \;\;\;
Q=\l[ \begin{array}{cccc} 
	\phi_{1}(1) &\phi_{1}(2) & \ldots & \ldots \\
	\vdots & \vdots & \ldots & \ldots \\
	\phi_{n}(1) &\phi_{n}(2) & \ldots & \ldots
\end{array} \r]
\ea
satisfy $MM^{*}=I_{n+1}$ and $QQ^{*}=I_{n}$. For a subset $A\subseteq E$, by $M_{A}$ (or $Q_A$) we mean the submatrix of $M$ (or $Q$) got by choosing the columns of $M$ indexed by elements of $A$ (keeping the order of rows and columns same as in the matrix $M$ (or $Q$)). Let $E^{\wedge k}$ denote the set of $k$-element subsets of $E$. The probability measures given by
\ba
\P_1(A)&=|\det(Q_{A})|^{2}  \;\;\; \mb{ for }A\in E^{\wedge n}, \\
\P_2(B)&=|\det(M_{B})|^{2}   \;\;\; \mb{ for }B\in E^{\wedge (n+1)}. 
\ea
are determinantal with kernel $K_1(x,y)=\summ_{i=1}^{n}\phi(x)\bar{\phi}_i(y)$ and $K_2(x,y)=\summ_{i=1}^{n+1}\phi(x)\bar{\phi}_i(y)$ respectively. The Cauchy-Binet formula shows that $\P_1$ and $\P_2$ are probability measures. Note that $\P_1$ and $\P_2$ can be extended as the probability measures on the power set of $E,$ that is, on the set $2^E$ by setting $\P_1(A)=0$ for any $A\in 2^E$ with $|A|\neq n$ and similarly $\P_2(B)=0$ if $|B|\neq (n+1).$ Let $X\subseteq E$ be a set chosen according to $\P_1$ and $Y\subseteq E$ be chosen according to $\P_2,$ and let $\XX=\sum\limits_{x\in X}\delta_{x}$ be the point process associated with $X$ and similarly let $\YY$ be the point process associated with $Y.$ It is clear that $\XX$ and $\YY$ are the determinantal processes associated with the kernel $K_1$ and $K_2$ respectively. 

The goal is to compare these two determinantal processes. It would be useful, however, to think of these point processes in terms of random subsets instead of random measures. And, we translate the Lyons' theorem in terms of subsets of $E$ without any mention of $\XX$ and $\YY.$ Before we do that, let us make a simple observation which will motivate our upcoming notations. Let $\mathcal{E}$ be an increasing subset of $\mathcal{M}(E).$ Since we will be  interested in the probability $\P(\XX\in \mathcal{E})$ and $\P(\YY\in \mathcal{E}),$ let us analyze these carefully. As $\XX$ is a simple point process if $\XX=\mu\in \mathcal{E}$ then $\mu$ can be associated to a unique subset $A\subseteq E$ of cardinality $n.$ Thus $\P(\XX\in \mathcal{E})=\P_1(\mathcal{E}_0)$ where $\mathcal{E}_0:=\{A\in E^{\wedge n}: \mu_A\in\mathcal{E}\}.$ In a similar way, we also get that $\P(\YY\in \mathcal{E})=\P_2(\mathcal{E}_1)$ where  $\mathcal{E}_1:=\{A\in E^{\wedge (n+1)}: \mu_A\in\mathcal{E}\}.$ This shows us how the probabilities like $\P(\XX\in \mathcal{E}), \P(\YY\in \mathcal{E})$ can be recast in terms of $\P_1$ and $\P_2.$ We now try to understand what conditions on $\mathcal{E}_0$ and $\mathcal{E}_1$ translate to the condition that $\mathcal{E}$ is increasing. To this end, let $\mathcal{E}_0$ be associated to $\mathcal{E}$ as above and let $A\in \mathcal{E}_0.$ Then $\mu_A\prec \mu_B$ if and only if $A\subseteq B.$ Therefore if $\mathcal{E}$ is increasing and $\mu_A\in \mathcal{E}$ then $\mu_B\in \mathcal{E}$ for all $A\subseteq B.$ We also point out that this entails that if $\mathcal{E}$ is increasing and $A\in \mathcal{E}_0$ then $A\cup\{x\}\in \mathcal{E}_1$ for all $x\in E\setminus A.$ With this discussion, we are now ready to translate the Lyons' theorem.

Let us fix the following notations. By  $M_{A|x}$ we will denote the matrix that has the same columns as  $M_{A\cup\{x\}}$, except that the column corresponding to $x$ is placed at the end. For $x\in E$ and $A\subseteq E$ we define $r(A,x)=|\{y\in \AA:y>x\}|$. If $\AA\subseteq E^{\wedge n}$ and $\BB\subseteq E^{\wedge (n+1)}$, then we say that $\AA\le \BB$ if $A\cup \{x\}\in \BB$ for any $A\in \AA$ and any $x\in E\setminus A$. Then Lyons' theorem on stochastic domination can be stated in this setting as follows. 

\begin{theorem}\label{thm:stochdom} Suppose  $\AA\subseteq E^{\wedge n}$ and $\BB\subseteq E^{\wedge (n+1)}$. If   $\AA\le \BB$, then $\P_1(\AA)\le \P_2(\BB)$. 
\end{theorem}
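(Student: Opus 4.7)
The plan is to lift the problem to the exterior algebra $\Lambda^{\bullet}\ell^2(E)$ and reduce it to a clean Hilbert space inequality involving a single creation operator. Set $\omega_1 = \phi_1 \wedge \cdots \wedge \phi_n \in \Lambda^n \ell^2(E)$ and $\omega_2 = \phi_1 \wedge \cdots \wedge \phi_{n+1} \in \Lambda^{n+1}\ell^2(E)$, which are unit vectors. Writing $e_A := e_{a_1} \wedge \cdots \wedge e_{a_n}$ for $A = \{a_1 < \cdots < a_n\}$, the Cauchy--Binet formula gives $\langle \omega_1, e_A\rangle = \det(Q_A)$ and $\langle \omega_2, e_B\rangle = \det(M_B)$, so $\P_1(A) = |\langle \omega_1, e_A\rangle|^2$ and $\P_2(B) = |\langle \omega_2, e_B\rangle|^2$. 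Introduce the creation operator $L : \Lambda^n \ell^2(E) \to \Lambda^{n+1} \ell^2(E)$, $L\xi := \xi \wedge \phi_{n+1}$, so $\omega_2 = L\omega_1$. Two properties of $L$ will do all the work: (i) $L$ is a contraction, $\|L\xi\| \le \|\xi\|$, a direct consequence of the fermionic canonical anti-commutation relation $L^*L + LL^* = \|\phi_{n+1}\|^2 I = I$ on $\Lambda^n \ell^2(E)$; and (ii) $L^*\omega_1 = 0$ (hence $L^*L\omega_1 = \omega_1$), which follows from $\phi_{n+1} \perp \phi_i$ for $i \le n$ via the interior-product formula $L^*(\phi_1 \wedge \cdots \wedge \phi_n) = \sum_i (-1)^{i-1}\langle \phi_{n+1},\phi_i\rangle\, \phi_1 \wedge \cdots \widehat{\phi_i}\cdots \wedge \phi_n$.

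Let $P_{\AA}$, $P_{\BB}$ denote the orthogonal projections onto $\mathrm{span}\{e_A : A \in \AA\}$ and $\mathrm{span}\{e_B : B \in \BB\}$ respectively, so that $\P_1(\AA) = \|P_{\AA}\omega_1\|^2$ and $\P_2(\BB) = \|P_{\BB} L \omega_1\|^2$. Decompose $\omega_1 = \omega_1^{\AA} + \omega_1^{\AA^c}$ with $\omega_1^{\AA} := P_{\AA} \omega_1$. The hypothesis $\AA \le \BB$ says precisely that for every $A \in \AA$ and $x \notin A$, $A \cup \{x\} \in \BB$; since $L e_A = \sum_{x \notin A} \pm \phi_{n+1}(x)\, e_{A \cup \{x\}}$, this forces $L\omega_1^{\AA}$ into the range of $P_{\BB}$, giving $P_{\BB} L \omega_1^{\AA} = L\omega_1^{\AA}$. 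Expanding the square and simplifying the cross term via $\langle L\omega_1^{\AA}, L\omega_1^{\AA^c}\rangle = \langle \omega_1^{\AA}, L^*L\omega_1^{\AA^c}\rangle = \|\omega_1^{\AA}\|^2 - \|L\omega_1^{\AA}\|^2$ (using (ii) and $\omega_1^{\AA} \perp \omega_1^{\AA^c}$), we obtain
\[
\P_2(\BB) = 2\|\omega_1^{\AA}\|^2 - \|L\omega_1^{\AA}\|^2 + \|P_{\BB} L \omega_1^{\AA^c}\|^2 \ge \|\omega_1^{\AA}\|^2 = \P_1(\AA),
\]
where the inequality uses property (i) and the nonnegativity of the last term.

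The main obstacle --- and the stated motivation for revisiting Lyons' argument --- is to transcribe every step into the language of determinants, so the proof extends verbatim to the continuous setting where $\delta_x \notin L^2(E,\mu)$ renders the symbols $e_A$ unavailable. Each wedge identity has a determinant counterpart: $L$ is the Laplace expansion $\det(M_B) = \sum_{y \in B}\pm \phi_{n+1}(y)\det(Q_{B\setminus\{y\}})$; property (ii) becomes the identity $\sum_{x \in E}\pm\overline{\phi_{n+1}(x)}\det(Q_{S\cup\{x\}}) = 0$ for each $(n-1)$-subset $S$, which reduces via a further Laplace expansion to $\langle \phi_k,\phi_{n+1}\rangle = 0$; and property (i) becomes an $\ell^2$ bound on arrays of coefficients $(c_A)_{A \in E^{\wedge n}}$. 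The only real care required is in the sign and index bookkeeping that the exterior algebra packaged for free.
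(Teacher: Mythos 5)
Your argument is correct for the discrete case and is mathematically close to what happens in the paper, but it takes exactly the route the paper was designed to avoid: you work in the exterior algebra $\Lambda^{\bullet}\ell^2(E)$ with the vectors $e_A$, whereas the paper's stated purpose is to re-express Lyons' argument \emph{without} exterior algebra so that it transfers verbatim to $L^2(E,\mu)$ for non-atomic $\mu$, where $\delta_x\notin L^2$ and the $e_A$'s have no analogue. Your closing paragraph acknowledges this, but the transcription is sketched rather than carried out; as a proof of Theorem~\ref{thm:stochdom} alone (the discrete statement) what you wrote is complete and valid.

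Comparing the two: the paper proves the identity $\det(Q_A)=\sum_{x\notin A}(-1)^{r(A,x)}\overline{\phi}_{n+1}(x)\det(M_{A\cup\{x\}})$ (Proposition~\ref{claim:detequality}), which in your language is $\omega_1 = L^*\omega_2$ read off entrywise; then applies Cauchy--Schwarz to get $\P_1(\AA)\le \sqrt{\P_2(\BB)}\cdot\sqrt{\Sigma}$ for a certain sum $\Sigma$, and closes by proving $\Sigma\le\P_1(\AA)$ via a positive-semidefiniteness lemma (Lemma~\ref{positivitylemma}, a Gram-matrix factorization $\mathcal{M}=XX^*$). Your argument replaces the Cauchy--Schwarz step with the exact decomposition
\[
\P_2(\BB) = 2\|\omega_1^\AA\|^2 - \|L\omega_1^\AA\|^2 + \|P_\BB L\omega_1^{\AA^c}\|^2,
\]
so that the inequality drops out directly from the contraction bound $\|L\|\le 1$. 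That contraction is the CAR relation $L^*L+LL^*=I$; the paper's positivity lemma is precisely the determinant incarnation of $LL^*\ge 0$. So the two proofs use the same two inputs --- orthogonality ($L^*\omega_1=0$, i.e. Proposition~\ref{claim:detequality}) and $LL^*\ge 0$ (Lemma~\ref{positivitylemma}) --- but assemble them differently: your version avoids Cauchy--Schwarz and the half-powers $\sqrt{p_1}\sqrt{p_2}$, which is arguably a cleaner presentation of Lyons' original idea, while the paper's version sacrifices that cleanliness to keep everything in determinant form, which is what makes the proof of the continuous Theorem~\ref{thm:stochdominationgeneral} literally the same text with sums replaced by integrals. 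If you wanted your version to serve the same purpose, the nontrivial work remaining is to express $\|L\omega_1^\AA\|^2$, $\langle L\omega_1^\AA,L\omega_1^{\AA^c}\rangle$, and $\|L^*\omega_1^\AA\|^2$ as explicit sums (or integrals) of products of determinants of the $Q$- and $M$-minors, which is exactly the content of the paper's Lemma~\ref{positivitylemma} and Lemma~\ref{claim:detinequalitygeneral}.
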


The proof of the above theorem will require two results. Note that we write $\P_1(\AA)$ in terms of determinants of submatrices of $Q,$ while $\P_2(\BB)$ is written in terms of determinant of submatrices of $M.$ It is but natural to obtain a way to relate the determinant of a submatrix of $Q$ to that of a submatrix of $M.$ The following proposition serves the purpose. 

\begin{proposition}\label{claim:detequality} For any $A\in E^{\wedge n}$, we have \[\summ_{x\not\in A}(-1)^{r(A,x)}\overline{\phi}_{n+1}(x)\det(M_{A\cup\{x\}})=\det(Q_{A}),\] where $r(A,x)=|\{k\in A: k>x\}|$. 
\end{proposition}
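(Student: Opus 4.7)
The plan is to rewrite the left-hand side via a cofactor expansion along the column indexed by $x$ and then exploit the orthonormality of $\phi_1,\ldots,\phi_{n+1}$ to collapse the sum.

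First I would absorb the sign $(-1)^{r(A,x)}$ into the determinant. The matrix $M_{A|x}$ is obtained from $M_{A\cup\{x\}}$ by sliding the column indexed by $x$ past exactly the $r(A,x)$ columns indexed by elements of $A$ larger than $x$, at a cost of $(-1)^{r(A,x)}$. Thus $(-1)^{r(A,x)}\det(M_{A\cup\{x\}}) = \det(M_{A|x})$, and the left-hand side becomes
\[
\sum_{x\notin A}\overline{\phi}_{n+1}(x)\det(M_{A|x}).
\]

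Next, expand $\det(M_{A|x})$ along its last column, which is $(\phi_1(x),\ldots,\phi_{n+1}(x))^t$. Writing $M_A^{(\hat{i})}$ for the matrix obtained from $M_A$ by deleting its $i$-th row (so in particular $M_A^{(\widehat{n+1})}=Q_A$), this yields
\[
\det(M_{A|x}) = \sum_{i=1}^{n+1}(-1)^{i+n+1}\phi_i(x)\det\bigl(M_A^{(\hat{i})}\bigr).
\]
Substituting and interchanging the order of summation reduces the problem to evaluating $\sum_{x\notin A}\overline{\phi}_{n+1}(x)\phi_i(x)$ for each $i$. By orthonormality of $\{\phi_i\}$ in $\ell^2(E)$,
\[
\sum_{x\notin A}\overline{\phi}_{n+1}(x)\phi_i(x) \;=\; \delta_{i,n+1} \;-\; \sum_{x\in A}\overline{\phi}_{n+1}(x)\phi_i(x).
\]
The $\delta_{i,n+1}$ piece collapses the $i$-sum to the single term $\det\bigl(M_A^{(\widehat{n+1})}\bigr)=\det(Q_A)$, which is precisely the desired right-hand side.

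It then remains to show that the residual contribution vanishes, i.e.
\[
\sum_{x\in A}\overline{\phi}_{n+1}(x)\sum_{i=1}^{n+1}(-1)^{i+n+1}\phi_i(x)\det\bigl(M_A^{(\hat{i})}\bigr) = 0.
\]
For fixed $x\in A$, the inner sum in $i$ is exactly the cofactor expansion along the last column of the $(n+1)\times(n+1)$ matrix formed by appending the column $(\phi_1(x),\ldots,\phi_{n+1}(x))^t$ to $M_A$. Since $x\in A$, this appended column duplicates a column already present in $M_A$, so the determinant is zero. The substantive content of the proof is thus the orthonormality identity together with this repeated-column cancellation; the main obstacle is purely bookkeeping, namely tracking the $(-1)^{r(A,x)}$ from the column transposition and the $(-1)^{i+n+1}$ from the cofactor expansion so that they combine cleanly.
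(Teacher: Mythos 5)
Your proof is correct and uses the same key ingredients as the paper's: the sign identity $(-1)^{r(A,x)}\det(M_{A\cup\{x\}})=\det(M_{A|x})$, orthonormality of the $\phi_i$, and the vanishing of the determinant with a duplicated column when $x\in A$. The paper reaches the conclusion slightly more directly by first extending the sum to all $x\in E$ (via that vanishing) and then applying multilinearity to fold $\sum_x \overline{\phi}_{n+1}(x)\,(\text{column at }x)$ into the last column of a single determinant, whereas you perform a cofactor expansion and then reassemble the $x\in A$ correction back into a repeated-column determinant; this is a difference in bookkeeping, not in substance.
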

\begin{proof} Let $A=\{1,2,\ldots ,n\}$ without loss of generality. As $(-1)^{r(A,x)}\det(M_{A\cup\{x\}})=\det(M_{A|x})$,  the summand on the left hand side is $\overline{\phi}_{n+1}(x)\det(M_{A|x})$. The sum can be extended to all $x\in E$, since $\det(M_{A|x})=0$ for $x\in A$. Thus the sum on the left is equal to   \footnotesize
	\begin{align*}
	\sum_{x\in E}&\overline{\phi}_{n+1}(x) \det\mat{Q_{A}}{\begin{array}{c}\phi_{1}(x) \\ \vdots \\ \phi_{n}(x) \end{array}}{\begin{array}{lll} \phi_{n+1}(1) & \ldots & \phi_{n+1}(n) \end{array}}{\phi_{n+1}(x)}  \\
	&= \det\mat{Q_{A}}{\begin{array}{c}\<\phi_{1},\phi_{n+1}\> \\ \vdots \\ \<\phi_{n},\phi_{n+1}\> \end{array}}{\begin{array}{lll} \phi_{n+1}(1) & \ldots & \phi_{n+1}(n) \end{array}}{\<\phi_{n+1},\phi_{n+1}\>}
	\end{align*}
	\normalsize 
	by  multilinearity of the determinant. As $\phi_j$ are orthonormal, the last column is  $(0,\ldots ,0,1)^{t}$. Hence the determinant is equal to $\det(Q_A)$.
\end{proof}

\normalsize

\begin{lemma}
	Let $\phi: E\to \mathbb{C}$ and $\epsilon: E\times E^{\wedge n}\to \{+1,-1\}$ be any arbitrary functions. Let $\AA\subseteq E^{\wedge n}.$ Let $\mathcal{M}$ be the matrix (with rows and columns indexed by the elements of $\AA$) given by 
	$$
	\mathcal{M}(A,C)=
	\left\{
	\begin{array}{ccccc}
	\summ_{x\in A}|\phi(x)|^2, & \text{if}\hspace{1mm}A=C\\
	\epsilon(x,A)\epsilon(y,C)\phi(x)\overline{\phi(y)}, & \stackrel{\text{if}\hspace{1mm} |A\cap C|=n-1, \text{where}}{ x\in A\setminus C, y\in C\setminus A} \\
	0, & \text{otherwise}\\
	\end{array}.
	\right.
	$$
	Then, the matrix $\mathcal{M}$ is positive semidefinite. 
	\label{positivitylemma}
\end{lemma}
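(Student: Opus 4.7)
The plan is to exhibit $\mathcal{M}$ as a sum of rank--one positive semidefinite matrices $u_B u_B^{*}$, indexed by $(n-1)$-element subsets $B$ of $E$. The intuition comes from reading off the nonzero off-diagonal structure: two sets $A$ and $C$ in $\AA$ interact only when $|A \cap C| = n-1$, and in that case the natural "common piece" is precisely $B := A \cap C \in E^{\wedge(n-1)}$. So the decomposition should be labeled by this shared $(n-1)$-subset.

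Concretely, for each $B \in E^{\wedge(n-1)}$ I will define a column vector $u_B$ indexed by $\AA$ by
\[
u_B(A) \;=\; \begin{cases} \epsilon(x,A)\,\phi(x), & \text{if } A = B \cup \{x\} \text{ for some } x \notin B,\\ 0, & \text{otherwise.}\end{cases}
\]
Each $u_B u_B^{*}$ is manifestly positive semidefinite (its quadratic form is $v \mapsto |\langle u_B, v\rangle|^2$). Hence $\sum_{B \in E^{\wedge(n-1)}} u_B u_B^{*}$ is positive semidefinite, and it will remain to check this sum equals $\mathcal{M}$.

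The verification splits into the three cases of the definition of $\mathcal{M}$. For the diagonal entry $A = C$, the only $B$'s that contribute are $B = A \setminus \{x\}$ for $x \in A$; each contributes $|\epsilon(x,A)|^2 |\phi(x)|^2 = |\phi(x)|^2$, summing to $\sum_{x \in A} |\phi(x)|^2$. For $A \neq C$ with $|A \cap C| = n-1$, writing $A \setminus C = \{x\}$ and $C \setminus A = \{y\}$, the only $B$ with both $u_B(A) \neq 0$ and $u_B(C) \neq 0$ is $B = A \cap C$, giving a single contribution $\epsilon(x,A)\,\epsilon(y,C)\,\phi(x)\,\overline{\phi(y)}$ (using that $\epsilon$ is real). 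For $|A \cap C| \le n-2$, no single $(n-1)$-subset $B$ can simultaneously be $A \setminus \{x\}$ and $C \setminus \{y\}$, so the sum is zero. This matches the three cases in the definition of $\mathcal{M}$.

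I do not anticipate a genuine obstacle here: the only mildly delicate point is the bookkeeping of the signs $\epsilon(x,A)$ in the second case, where one needs to remember that these are $\pm 1$-valued so that conjugation does not flip any sign. Once the rank-one decomposition is correctly set up, positivity is immediate.
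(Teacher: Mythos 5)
Your proof is correct and is essentially identical to the paper's: the paper defines a matrix $X$ with rows indexed by $\AA$ and columns by $E^{\wedge(n-1)}$ via the same formula and shows $\mathcal{M}=XX^{*}$, which is precisely your rank-one decomposition $\sum_{B}u_{B}u_{B}^{*}$ with $u_B$ being the $B$-th column of $X$. The case analysis you give to verify the entries agrees with the paper's computation.
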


\begin{proof}
	Let us consider the matrix $X$ (with rows indexed by $\AA$, and columns indexed by $E^{\wedge (n-1)}$) defined by
	$$X(A,T)=\left\{ \begin{array}{cccccc}
	\epsilon(x,A)\phi(x), & \text{if}\hspace{1mm} T\subset A \hspace{1mm}\text{and}\hspace{1mm}\{x\}=A\setminus T \\
	0, & \text{otherwise}
	\end{array}.
	\right.$$
	
	Observe that \begin{align*}
	XX^*(A,A)&=\summ_{T \in E^{\wedge n-1}}X(A,T)X^*(T,A)\\
	&=\summ_{T\subset A:|T|=n-1}|\phi(x)|^2 \\
	&= \summ_{x\in A}|\phi(x)|^2\\
	&= \mathcal{M}(A,A).
	\end{align*}
	Clearly, when $|A\cap C|\le n-2$ then $XX^*(A,C)$ is zero. A similar computation shows that when $|A\cap C|=n-1$ then,
	\begin{align*}
	XX^*(A,C)&= \summ_{T\in E^{\wedge n-1}}X(A,T)X^*(T,C)\\
	&= \epsilon(x,A)\epsilon(y,C)\phi(x)\overline{\phi}(y), \hspace{2mm}\text{where}\hspace{1mm} x\in A\setminus C, y\in C\setminus A\\
	&= \mathcal{M}(A,C).
	\end{align*}
	This proves that the matrix $\mathcal{M}=XX^*$ and hence positive semidefinite.
\end{proof}
\remark 
Observe that in the proof of the lemma \ref{positivitylemma} we do not use any special property of $\phi.$ It is true for any $\phi$ and any $\epsilon$. 

\begin{remark}
Note that as a result of the above lemma, we get that for any function $F:\AA\to \C$ we have $\langle X^*F,F\rangle\ge 0$ (Here the inner product is taken with in $\ell^2(\AA).$ As $\AA$ is at most countable there is a natural way to equip $\ell^2(\AA)$ with an inner product. Let $F, G:\AA\to \C$ be two functions then $\langle F, G\rangle:=\sum\limits_{A\in \AA}F(A)\overline{G(A)}.$)

This is the way it would be used later. In the next section where we prove the stochastic domination in continuous setting, we do not record it as a separate lemma but it is used implicitly in one step.
\end{remark}

\bprf[Proof of Theorem~\ref{thm:stochdom}] We shall write $\phi$ for $\phi_{n+1}.$
\begin{align*}
\P_1(\AA)=\summ_{A\in \AA}|\det(Q_A)|^2&= \summ_{A\in \AA}\det(Q_A)\overline{\det(Q_A)}\\
&= \summ_{A\in \AA}\summ_{x\notin A}(-1)^{r(A,x)}\overline{\phi(x)}\det(M_{A\cup \{x\}})\overline{\det(Q_A)}\\
&=\summ_{B\in \BB}\det(M_B)\summ_{x:B\setminus\{x\}\in \AA}(-1)^{r(B\setminus\{x\},x)}\overline{\phi(x)}\overline{\det(Q_{B\setminus\{x\}})}.
\end{align*}
By Cauchy-Schwarz inequality we get that 
\ba 
\P_1(\AA) \le\left( \summ_{B\in \BB}|\det(M_B)|^2\right)^{\frac{1}{2}} \left(\summ_{B\in \BB}\left|\summ_{x: B\setminus\{x\}\in \AA}(-1)^{r(B\setminus\{x\},x)}\overline{\phi(x)}\overline{\det(Q_{B\setminus\{x\}})}\right|^2\right)^{\frac{1}{2}}.
\ea

Now observe that for a fixed $B\in \BB,$ we have the following
\begin{align*}
&\left\lvert\summ_{x: B\setminus\{x\}\in \AA}(-1)^{r(B\setminus\{x\},x)}\overline{\phi(x)}\overline{\det(Q_{B\setminus\{x\}})}\right\rvert^2 \\
&=\summ_{x,y: B\setminus\{x\},B\setminus\{y\}\in \AA}(-1)^{r(B\setminus\{x\},x)}\overline{\phi(x)}\overline{\det(Q_{B\setminus\{x\}})}(-1)^{r(B\setminus\{y\},y)}\phi(y)\det(Q_{B\setminus\{y\}})\\
&= \summ_{\stackrel{A,C\in \AA:}{ A\cup C \subseteq B}}(-1)^{r(A,x)}\phi(x)\det(Q_{A})(-1)^{r(C,y)}\overline{\phi(y)}\overline{\det(Q_{C})},
\end{align*}
where $x,y$ are the unique elements such that $x\in B\setminus A$ and $y\in B\setminus C.$ Therefore we can write the above expression as
\footnotesize
\begin{align*}
&\summ_{B\in \BB}\summ_{A,C\in \AA: A\cup C \subseteq B}(-1)^{r(A,x)}\overline{\phi(x)}\overline{\det(Q_{A})}(-1)^{r(C,y)}\phi(y)\det(Q_{C})\\
&=\summ_{A,C\in \AA}\det(Q_A)\overline{\det(Q_C)}\summ_{x\notin A, y\notin C:A\cup\{x\}=C\cup\{y\}}(-1)^{r(A,x)+r(C,y)}\phi(x)\overline{\phi(y)}.
\end{align*}
\normalsize
Note that when $A=C$ the inner sum becomes $\summ_{x\notin A}|\phi(x)|^2$, when $A\neq C$ the inner sum is non-empty precisely when $|A\cap C|=n-1.$ Therefore, we write the above sum as
\footnotesize 
\begin{align*}
&\summ_{A\in\AA}|\det(Q_A)|^2\summ_{x\notin A}|\phi(x)|^2+\summ_{|A\cap C|=n-1}(-1)^{r(A,x)+r(C,y)}\det(Q_A)\overline{\det}(Q_C)\phi(x)\overline{\phi}(y)\\
&= \summ_{A\in \AA}|\det(Q_A)|^2-\\
&\left(\summ_{A\in \AA}\det(Q_A)\sum_{x\in A}|\phi(x)|^2 - \summ_{|A\cap C|=n-1}(-1)^{r(A,x)+r(C,y)}\det(Q_A)\overline{\det(Q_C)}\phi(x)\overline{\phi(y)}\right)\\
&= \summ_{A\in \AA}|\det(Q_A)|^2-\\
&\left(\summ_{A\in \AA}\det(Q_A)\sum_{x\in A}|\phi(x)|^2 +\summ_{|A\cap C|=n-1}(-1)^{r(A,y)+r(C,x)}\det(Q_A)\overline{\det(Q_C)}\phi(x)\overline{\phi(y)}\right).
\end{align*}
\normalsize
In the last equality, we used the fact that $(-1)^{r(A,x)+r(C,y)}=-1(-1)^{r(A,y)+r(C,y)}.$
The theorem follows, if we can show that the quantity in the bracket above is positive. To this end define a function $F:\AA\to \C$ by $F(A)=\det(Q_A)$ and observe that the quantity in the bracket is nothing but $\langle \mathcal{M}F, F\rangle$ where $\mathcal{M}$ is the matrix (with rows and columns indexed by the elements of $\AA$) defined by $$
	\mathcal{M}(A,C)=
	\left\{
	\begin{array}{ccccc}
	\summ_{x\in A}|\phi(x)|^2, & \text{if}\hspace{1mm}A=C\\
	(-1)^{r(y,A)}(-1)^{r(x, C)}\phi(x)\overline{\phi(y)}, & \stackrel{\text{if}\hspace{1mm} |A\cap C|=n-1, \text{where}}{ x\in A\setminus C, y\in C\setminus A} \\
	0, & \text{otherwise}\\
	\end{array}.
	\right.
	$$
It follows from lemma \ref{positivitylemma} that $\mathcal{M}$ is positive definite and hence $\langle \mathcal{M}F, F\rangle$ is positive, which completes the proof. 
\eprf

\remark 
It is natural at this stage to ask if we have a similar result for bi-orthogonal ensemble. Recall that $$\P_n(dx_1,\ldots, dx_n):= C_n\det[\phi_i(x_j)]_{i,j=1}^{n}\det[\psi_i(x_j)]_{i,j=1}^{n}\prod\limits_{i=1}^{n}\mu(dx_i).$$
for suitable normalization constant $C_n>0$, and function $\phi_i, \psi_i$ such that all the integrals $G_{ij}:=\int \phi_i(x)\psi_j(x)\mu(dx)$ are finite, defines a determinantal probability measure. One can naturally ask if $\P_n\prec \P_{n+1}$ in this case. The answer to this question is, 'No'. A fairly simple counter-example can be constructed as follows.
 
 \beg	
 Consider the set $E={a,b,c}$ equipped with the uniform probability measure $\mu.$ Now, let's define the functions $\phi_i, \psi_i, i=1,2$ on $E$ as follows:
 $$\phi_1(a)=\phi_1(b)=1, \phi_1(c)=0$$
 $$\phi_2(a)=\phi_2(b)=1, \phi_2(c)=1$$
 $$\psi_1(a)=1, \psi_1(b)=0, \psi_1(c)=-1$$
 $$\psi_2(a)=-1, \psi_2(b)=1=\psi_2(c)$$
 Observe that $\langle \phi_i,\psi_j\rangle =\delta_{ij}$ which means the kernel $K_n(x,y)=\sum_{i,j=1}^n\phi_i(x)\psi_j(y).$ 
 
 We will compare the determinantal processes with kernel $K_1$ and $K_2$ (Let us call the corresponding probability measures as $\P_1, \P_2$ respectively.) 
 
 For $n=1,$ we see that the kernel $K_1(x,y)=\phi_1(x)\psi_1(y).$ Recall that the point process $\X_1$ defined by the kernel $K_1$ has exactly one point almost surely. We thus obtain that $\P_1(x)=K_1(x,x)=\phi_1(x)\psi_1(y)$ which gives us that $\P_1(\{a\})=1, \P_1(\{b\})=0=\P_1(\{c\}).$ 
 
 For $n=2$, similarly, the point process $\X_2$ has exactly two points almost surely. Therefore, it suffices to compute the probability of each subset of $E$ which has cardinality $2$. Recall that $\P_2(\{x,y\})=\det\left(\begin{matrix}
 \phi_1(x) & \phi_1(y)\\
 \phi_2(x) & \phi_2(y)
 \end{matrix}\right)\det \left(\begin{matrix}
 \psi_1(x) & \psi_1(y)\\
 \psi_2(x) & \psi_2(y)
 \end{matrix}\right).$ Using which let us calculate all the relevant probabilities.
 
 $$\P_2(\{a,b\})=\det\left(\begin{matrix}
 \phi_1(a) & \phi_1(b)\\
 \phi_2(a) & \phi_2(b)
 \end{matrix}\right)\det \left(\begin{matrix}
 \psi_1(a) & \psi_1(b)\\
 \psi_2(a) & \psi_2(b)
 \end{matrix}\right))=0,$$
 
 $$\P_2(\{a,c\})=\det\left(\begin{matrix}
 \phi_1(a) & \phi_1(c)\\
 \phi_2(a) & \phi_2(c)
 \end{matrix}\right)\det \left(\begin{matrix}
 \psi_1(a) & \psi_1(c)\\
 \psi_2(a) & \psi_2(c)
 \end{matrix}\right)=0.$$
 And, 
 $$\P_2(\{b,c\})=\det\left(\begin{matrix}
 \phi_1(b) & \phi_1(c)\\
 \phi_2(b) & \phi_2(c)
 \end{matrix}\right)\det \left(\begin{matrix}
 \psi_1(b) & \psi_1(c)\\
 \psi_2(b) & \psi_2(c)
 \end{matrix}\right)=1.$$
 
 If we start with $\AA=\{\{a\}\}\subset E^{\wedge 1}$ and let $\{\{a,b \}, \{a,c\}\}=\BB\subset E^{\wedge 2}$ then $\AA \prec \BB$ but $\P_1(\AA)=1\not\le 0=\P_2(\BB).$
 \eeg
 \section{Stochastic domination: General finite rank case}
Now let $(E,\FF,\mu)$ be a measure space and let $\phi_{1},\ldots ,\phi_{n+1}$ be an orthonormal set. Let $X=(X_{1},\ldots ,X_{n})$ be a random vector taking values in $E^{n}$ and having density (w.r.t.  $\mu^{\otimes n}$)
\ba
\frac{1}{n!}|\det(\phi_{i}(x_{j}))_{i,j\le n}|^{2}.
\ea
The determinantal process corresponding to this measure is defined to be the random set $\XX=\{X_{1},\ldots ,X_{n}\}$ (or as the random measure $\del_{X_{1}}+\ldots +\del_{X_{n}}$ which is sometimes more convenient).  Similarly, define $Y=(Y_{1},\ldots ,Y_{n+1})$ to be a random vector taking values in $E^{\wedge n+1}$ and having density (w.r.t. $\mu^{\otimes (n+1)}$) and let $\YY$ be the determinantal process corresponding to this measure. Since the density of $X$ vanishes unless $x_{i}$s are distinct, it is clear that $\XX$ takes values in the collection of $n$-element subsets of $E$. But it is clear that everything about $\XX$ can also be formulated in terms of the random vector $X$ and that is what we do here. Henceforth we do not mention $\XX$ or $\YY$.

Let $\AA\subseteq E^{n}$ be a measurable subset (i.e., in $\FF^{\otimes n}$) that is symmetric (i.e., closed w.r.t. permutation of co-ordinates). Similarly let $\BB$ be a measurable symmetric subset of $E^{n+1}$. Then we say that $\AA\le \BB$ if $(x,\ldots ,x_{n},t)\in \BB$ for any $(x_{1},\ldots ,x_{n})\in \AA$ and any $t\in E$.
\begin{theorem}\label{thm:stochdominationgeneral} Let $\AA$ and $\BB$ be measurable, symmetric subsets of $E^{n}$ and $E^{n+1}$, respectively. Assume that $\AA\le \BB$. Then $\P\{X\in \AA\}\le \P\{Y\in \BB\}$.  
\end{theorem}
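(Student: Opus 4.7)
My plan is to adapt the discrete proof (Theorem~\ref{thm:stochdom}) by replacing sums with integrals and identifying the appropriate continuous analogs. Write $\phi := \phi_{n+1}$ and $D_k(z_1, \ldots, z_k) := \det(\phi_i(z_j))_{i,j=1}^{k}$, so that the densities of $X$ and $Y$ are $|D_n|^2 / n!$ and $|D_{n+1}|^2 / (n+1)!$ respectively. The first step is to establish the continuous analog of Proposition~\ref{claim:detequality},
\[ D_n(x_1, \ldots, x_n) \;=\; \int_E \overline{\phi(t)}\, D_{n+1}(x_1, \ldots, x_n, t)\, d\mu(t), \]
which follows by multilinearity: expanding $D_{n+1}$ along its last column and integrating against $\overline{\phi(t)}\, d\mu(t)$, the last column is replaced by $(\langle \phi_1, \phi\rangle, \ldots, \langle \phi_{n+1}, \phi\rangle)^{t} = (0, \ldots, 0, 1)^{t}$, leaving the $(n+1,n+1)$-minor, which is exactly $D_n$.

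Using this identity to substitute for $\overline{D_n(x)}$ inside the integral yields
\[ n!\,\P\{X \in \AA\} = \int_{\AA} |D_n(x)|^2 \, d\mu^{\otimes n}(x) = \int_{E^{n+1}} h(y)\, \overline{D_{n+1}(y)}\, d\mu^{\otimes(n+1)}(y), \]
where $h(y_1, \ldots, y_{n+1}) := \mathbf{1}_\AA(y_1, \ldots, y_n)\, \phi(y_{n+1})\, D_n(y_1, \ldots, y_n)$. Since $D_{n+1}$ is antisymmetric, the integral is unchanged when $h$ is replaced by its antisymmetrization $h_{-}$. Grouping permutations in $S_{n+1}$ by their value at $n+1$ and using symmetry of $\AA$ together with antisymmetry of $D_n$, a short computation gives
\[ h_{-}(y) \;=\; \frac{1}{n+1}\sum_{j=1}^{n+1} (-1)^{n+1-j}\, \mathbf{1}_\AA(y^{(j)})\, \phi(y_j)\, D_n(y^{(j)}), \]
where $y^{(j)}$ denotes $y$ with its $j$-th coordinate removed. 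The hypothesis $\AA \le \BB$ together with symmetry of $\BB$ imply that $h_{-}$ is supported on $\BB$.

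Applying the Cauchy--Schwarz inequality in $L^2(E^{n+1}, \mu^{\otimes(n+1)})$ to $\int h_{-}\, \overline{\mathbf{1}_\BB\, D_{n+1}}$ then gives
\[ \bigl(n!\,\P\{X \in \AA\}\bigr)^2 \;\le\; \int |h_{-}|^2\, d\mu^{\otimes(n+1)} \;\cdot\; (n+1)!\,\P\{Y \in \BB\}. \]
Everything therefore reduces to the norm bound $\int |h_{-}|^2 \le \tfrac{n!}{n+1}\,\P\{X \in \AA\}$, which is the continuous counterpart of Lemma~\ref{positivitylemma}. Setting $G := (n+1)h_{-}$ and $F := \mathbf{1}_\AA D_n$ (an antisymmetric function on $E^n$), the expansion of $\|G\|^2$ as a double sum indexed by $j,k$ yields diagonal ($j=k$) terms summing to exactly $(n+1)\|\phi\|^2\|F\|^2 = (n+1)!\,\P\{X \in \AA\}$.

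The main obstacle is to control the off-diagonal terms. For $j < k$ I would use antisymmetry of $F$ to write $F(y^{(j)}) = (-1)^{n-k+1} F(z, y_k)$ and $F(y^{(k)}) = (-1)^{n-j} F(z, y_j)$, where $z \in E^{n-1}$ is the tuple of coordinates other than $y_j, y_k$. After collecting signs the cross term factors through an $L^2(E)$ inner product:
\[ (-1)^{j+k} \int \phi(y_j)\,\overline{\phi(y_k)}\, F(y^{(j)})\, \overline{F(y^{(k)})}\, d\mu^{\otimes(n+1)} \;=\; -\int_{E^{n-1}} \bigl|\langle \phi,\, F(z, \cdot)\rangle\bigr|^2\, d\mu^{\otimes(n-1)}(z) \;\le\; 0, \]
and the $j > k$ terms are the complex conjugates and hence equal. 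This establishes $\|G\|^2 \le (n+1)!\,\P\{X \in \AA\}$, and the earlier Cauchy--Schwarz bound then collapses to $\P\{X \in \AA\} \le \P\{Y \in \BB\}$, as desired.
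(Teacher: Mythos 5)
Your proposal is correct and takes essentially the same route as the paper: the same determinantal identity (Proposition~\ref{claim:detequalitygeneral}), the same $L^2$ norm bound (Lemma~\ref{claim:detinequalitygeneral}), and the same concluding Cauchy--Schwarz step, just applied in a slightly different order. Your use of explicit antisymmetrization and the factorization of the cross-terms through $|\langle\phi,F(z,\cdot)\rangle|^2$ makes the sign bookkeeping more transparent than the paper's $\eps(y,k)$ notation, but it is the same argument in substance.
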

As before, we shall need two claims, analogous to the discrete situation (except that each set of $n$ elements is replaced by $n!$ tuples). Let us fix the following notation. For $x=(x_1,x_2,\hdots, x_n)\in E^n,$ define $K_n(x):= (\phi_i(x_j))_{1\le i,j\le n}.$ Also, for $t\in E, x\in E^n$ we will write $(x|t;n+1)=(x|t):=(x_1,x_2,\hdots, x_n, t)\in E^{(n+1)}$ and if $k\in [n]$ then define the vector $(x|t;k)$ to be the vector obtained by putting $t$ at the $k$-th coordinate in $x$, that is, $(x|t;k)=(x_1,\ldots,x_{k-1},t,x_{k+1},\ldots,x_n)$. We begin with the following claim:

\begin{proposition}\label{claim:detequalitygeneral} For any $(x_{1},\ldots ,x_{n})\in E^{n}$, we have 
	\ba
	\int_{E}\bar{\phi}_{n+1}(t) \times \det(K_{n+1}(x|t))\ d\mu(t) = \det(K_n(x)).
	\ea 
\end{proposition}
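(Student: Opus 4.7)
The plan is to mimic Proposition \ref{claim:detequality} exactly, with the discrete sum over $x \in E$ replaced by the integral over $t \in E$ against $\mu$. The key observation is that $\bar\phi_{n+1}(t)$ and the $t$-dependence in $\det(K_{n+1}(x|t))$ appear only through the last column of $K_{n+1}(x|t)$, so multilinearity of the determinant lets us swap the integral past the $\det$ and act column-wise.

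First I would pull $\bar\phi_{n+1}(t)$ into the determinant by scaling the last column of $K_{n+1}(x|t)$ (whose $i$th entry is $\phi_i(t)$) by $\bar\phi_{n+1}(t)$. Then, by multilinearity of the determinant in its last column and Fubini/linearity of the integral, the integral passes inside the determinant and is applied entrywise to the last column. The $i$th entry of the new last column becomes
\[
\int_E \phi_i(t)\,\bar\phi_{n+1}(t)\,d\mu(t) \;=\; \langle \phi_i,\phi_{n+1}\rangle,
\]
which by orthonormality equals $\delta_{i,n+1}$. Hence the last column is the standard basis vector $(0,\ldots,0,1)^t$, and cofactor expansion along that column yields exactly $\det(K_n(x))$, which is what we want.

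The justification for exchanging $\int_E$ with $\det$ reduces to the fact that the determinant of an $(n+1)\times(n+1)$ matrix is a polynomial in its entries, combined with the integrability of $\phi_i(t)\bar\phi_{n+1}(t)$ on $E$; this follows from $\phi_i,\phi_{n+1}\in L^2(E,\mu)$ via Cauchy--Schwarz. So each term in the Leibniz expansion of $\det(K_{n+1}(x|t))$, multiplied by $\bar\phi_{n+1}(t)$, is an $L^1$ function of $t$, and we may integrate termwise.

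I do not expect any real obstacle: the only thing that could go wrong in the measure-theoretic setting (as opposed to the discrete case) is the interchange of integral and determinant, but this is immediate from multilinearity and the $L^2$ hypothesis on the $\phi_i$'s. The argument is essentially a one-line application of multilinearity plus orthonormality, and it is precisely what makes Proposition \ref{claim:detequality} and Proposition \ref{claim:detequalitygeneral} the same statement in two different measure-theoretic guises.
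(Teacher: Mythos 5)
Your proof is correct and is essentially identical to the paper's: both apply multilinearity of the determinant in the last column to move the integral of $\bar\phi_{n+1}(t)\phi_i(t)$ inside, use orthonormality to reduce the last column to $(0,\ldots,0,1)^t$, and expand. The only addition on your side is the explicit justification of the integral--determinant interchange via Cauchy--Schwarz and the Leibniz expansion, which the paper leaves implicit.
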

\begin{proof} By the multilinearity of the determinant, the integral becomes (inner products in $L^{2}(\mu)$)
	\ba
	\det\mat{(\phi_{i}(x_{j}))_{i,j\le n}}{\begin{array}{c}\<\phi_{1},\phi_{n+1}\> \\ \vdots \\ \<\phi_{n},\phi_{n+1}\> \end{array}}{\begin{array}{lll} \phi_{n+1}(x_1) & \ldots & \phi_{n+1}(x_n) \end{array}}{\<\phi_{n+1},\phi_{n+1}\>} .
	\ea
	But then the last column is $(0,\ldots ,0,1)^{t}$, hence we get $\det(\phi_{i}(x_{j}))_{i,j\le n}$.
\end{proof}
\begin{remark}
	Note that the above proposition is entirely analogous to Proposition \ref{claim:detequality}. We will now prove a lemma which analogous to the lemma \ref{positivitylemma} but here we directly prove what we would use it for. 
\end{remark}

Let us fix the following notation for the next proof. For $y=(y_{1},\ldots ,y_{n+1})$, let $\hat{y}_{k}=(y_{1},\ldots ,y_{k-1},y_{k+1},\ldots ,y_{n})$.

\begin{lemma}\label{claim:detinequalitygeneral} Let $\AA$ be measurable, symmetric subsets of $E^{n}.$ Then,
	\ba
	\frac{1}{n!}\int_{\AA} |\det(K_n(x))|^{2}&d\mu^{\otimes n}(x) \\
	&\ge \frac{1}{(n+1)!}\int_{E^{n+1}} \Big| \sum_{k: \ \hat{y}_{k}\in \AA}(-1)^k \phi_{n+1}(y_{k})\det(K_{n}(\hat{y}_k)) \Big|^{2}d\mu^{\otimes (n+1)}(y)
	\ea.
\end{lemma}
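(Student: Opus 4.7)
The plan is to reduce the inequality to a single positivity statement via the auxiliary function $f \colon E^{n}\to\C$ given by $f(x) := \det(K_{n}(x))\,\mathbf{1}_{\AA}(x)$. Because $\det(K_{n}(x))$ is antisymmetric under permutations of $(x_{1},\ldots,x_{n})$ (a transposition of arguments swaps two columns of $K_{n}$) and $\AA$ is symmetric, $f$ is antisymmetric. In this notation the LHS of the lemma equals $\frac{1}{n!}\int_{E^{n}}|f|^{2}\,d\mu^{\otimes n}$, while the RHS is $\frac{1}{(n+1)!}\int_{E^{n+1}}|G(y)|^{2}\,d\mu^{\otimes (n+1)}$ with $G(y) := \sum_{k=1}^{n+1}(-1)^{k}\phi_{n+1}(y_{k})f(\hat y_{k})$.

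I would then expand $|G(y)|^{2}$ as a double sum over indices $k,j$ and handle the diagonal and off-diagonal pieces separately. For the diagonal terms ($k=j$), performing the $y_{k}$ integration first and using $\|\phi_{n+1}\|_{L^{2}(\mu)}=1$ shows that each of the $n+1$ summands contributes $\int|f|^{2}\,d\mu^{\otimes n}$, which after dividing by $(n+1)!$ matches the LHS exactly. For the off-diagonal part, set
\[
T_{k,j} := \int_{E^{n+1}} \phi_{n+1}(y_{k})\overline{\phi_{n+1}(y_{j})}\,f(\hat y_{k})\overline{f(\hat y_{j})}\,d\mu^{\otimes(n+1)}(y),
\]
and
\[
I_{0} := \int_{E^{n-1}}\Big|\int_{E} f(x_{1},\ldots,x_{n-1},t)\overline{\phi_{n+1}(t)}\,d\mu(t)\Big|^{2}d\mu^{\otimes (n-1)}(x)\ge 0.
\]
The key identity to prove is $(-1)^{k+j}T_{k,j} = -I_{0}$ for every $k\ne j$. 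This is obtained by using the antisymmetry of $f$ to push $y_{j}$ (which appears inside $\hat y_{k}$) to the last coordinate slot of $\hat y_{k}$, and symmetrically $y_{k}$ to the last slot of $\hat y_{j}$; this lets the $y_{k},y_{j}$ integrations factor into a squared inner product $|\langle f(x,\cdot),\phi_{n+1}\rangle|^{2}$, while the parity signs picked up from the two permutations combine with the explicit $(-1)^{k+j}$ to leave precisely a $-1$.

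Summing the $n(n+1)$ off-diagonal contributions then yields
\[
\frac{1}{(n+1)!}\int_{E^{n+1}}|G|^{2}\,d\mu^{\otimes (n+1)} = \frac{1}{n!}\int_{E^{n}}|f|^{2}\,d\mu^{\otimes n} - \frac{n}{n!}I_{0} \le \frac{1}{n!}\int_{E^{n}}|f|^{2}\,d\mu^{\otimes n},
\]
which is the desired inequality. The main obstacle is the sign bookkeeping in the off-diagonal computation: one must track the parity $(-1)^{n-p_{k}(j)}$ that appears when $y_{j}$ is moved past the other entries of $\hat y_{k}$ (where $p_{k}(j)$ denotes the position of $y_{j}$ in $\hat y_{k}$), the analogous parity for $\hat y_{j}$, and their combination with $(-1)^{k+j}$, and check that the net sign collapses to $-1$ uniformly, regardless of whether $k<j$ or $k>j$. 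Once this sign computation is carried out, the remainder is routine Fubini manipulation, and $I_{0}\ge 0$ is immediate as a squared $L^{2}$-norm.
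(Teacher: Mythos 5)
Your proposal is correct and follows essentially the same route as the paper: expand $|G(y)|^{2}$ into diagonal terms (which reproduce the left-hand side via Fubini and $\|\phi_{n+1}\|_{L^2}=1$) and off-diagonal terms, then show the latter collapse, after using antisymmetry of $\det(K_n)$ to relocate the dropped coordinates, into $-n(n+1)$ copies of the non-negative quantity $I_0$. The sign computation you flag as the main obstacle is indeed the delicate point, and you carry it out correctly; the paper's write-up glosses over precisely this bookkeeping (and is somewhat imprecise about the domain of the reparametrized integral), so your version is in fact the more careful rendering of the same argument.
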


\begin{proof}
	First observe that
	\ba 
	\Big|& \sum_{k:\hat{y}_{k}\in \AA}(-1)^k \phi_{n+1}(y_{k})\det(K_{n}(\hat{y}_k)) \Big|^{2}\\ &= \summ_{k=1}^{n+1}|\det(K_n(\hat{y}_k))|^2|\phi(y_k)|^2\chi_{\AA}(\hat{y}_k)\\
	&+\summ_{j,k=1}^{n+1}(-1)^{j+k}\phi_{n+1}(y_k)\overline{\phi_{n+1}}(y_j)\det(K_n(\hat{y}_k))\overline{\det}(K_{n}(\hat{y}_j))\chi_{\AA}(\hat{y}_k)\chi_{\AA}(\hat{y}_j).
	\ea
	Now note that $$\intt_{E^{n+1}}\det(K_n(\hat{y}_k))|^2|\phi(y_k)|^2\chi_{\AA}(\hat{y}_k)=\intt_{\AA}|\det(K_n(x))|^2d\mu^{\otimes n}(x)\intt_E|\phi(t)|^2d\mu(t).$$
	And therefore we get that 
	
	\ba
	&\frac{1}{(n+1)!}\int_{E^{n+1}} \Big| \sum_{k:\hat{y}_{k}\in \AA}\eps(y,k) \phi_{n+1}(y_{k})\det(K_{n}(\hat{y}_k)) \Big|^{2}d\mu^{\otimes (n+1)}(y) \\
	&=\frac{1}{n!}\intt_{\AA}|\det(K_n)(x)|^2d\mu^{\otimes n}(x)\\ &-\frac{1}{(n+1)!}\summ_{j,k=1}^{n+1}\intt_{E^{n+1}}(-1)^{j+k-1}\phi_{n+1}(y_k)\overline{\phi_{n+1}}(y_j)\det(K_n(\hat{y}_k))\overline{\det}(K_{n}(\hat{y}_j))\chi_{\AA}(\hat{y}_k)\chi_{\AA}(\hat{y}_j).
	\ea

Note that it suffices to show that
$$\intt_{E^{n+1}}(-1)^{j+k-1}\phi_{n+1}(y_k)\overline{\phi_{n+1}}(y_j)\det(K_n(\hat{y}_k))\overline{\det}(K_{n}(\hat{y}_j))\chi_{\AA}(\hat{y}_k)\chi_{\AA}(\hat{y}_j)\ge 0.$$ 

Let us denote by $\tilde{\AA}=\{\hat{y}_k: y\in \AA\}$ (Note that $\tilde{\AA}$ is well defined i.e. independent of $k$ due to the symmetry of $\AA.$) Let $\AA_0=\{y_1: y\in \AA\}$. Now, note that a fixed $k, j$ and a vector $y\in E^{n+1}$ is such that $\hat{y}_k\in \AA$ and $\hat{y}_j\in \AA$ corresponds uniquely to a triplet $(x,t_1,t_2)$ where $x\in \tilde{\AA}$ and $t_1,t_2\in \AA_0$ (We obtain $x$ by dropping both $y_j$ and $y_k$ from $y$ and say $t_1=y_k$ while $t_2=y_j$). Therefore, we rewrite the above integral as
\ba
\intt_{\tilde{\AA}}\intt_{\AA_0}\intt_{\AA_0}\det(K_n(x|t_1))\overline{\det}(K_n(x|t_2))\phi_{n+1}(t_1)\overline{\phi_{n+1}}(t_2)d\mu(t_1)d\mu(t_2)d\mu^{\otimes (n-1)}(x).
\ea 

To show that the above integral is positive, we show that the above integral is norm square of some function, and therefore non-negative. To this end, define an operator $T:L^2(\AA_0)\to L^2(\tilde{\AA})$ by
$$Tf(x)=\intt_{A_0}\det(K_n)(x|t)d\mu(t).$$
Observe that 
\ba
0 &\le \langle Tf, Tf\rangle\\
&=\intt_{\AA_1}\intt_{A_0}\intt_{A_0}\det(K_n(x|t_1))\overline{\det}(K_n(x|t_2))\phi_{n+1}(t_1)\overline{\phi_{n+1}}(t_2)d\mu(t_1)d\mu(t_2)d\mu^{\otimes(n-1)}(x),
\ea
which completes the proof.
\end{proof}

\begin{proof}[Proof of Theorem~\ref{thm:stochdominationgeneral}] Let $p_{1}=\P\{X\in \AA\}$ and $p_{2}=\P\{Y\in \BB\}$. Then,
	\ba
	p_{2}&=\frac{1}{(n+1)!}\intt_{\BB}|\det(K_{n+1}(y))|^{2}d\mu^{\otimes n+1}(y),\hspace{10mm}\text{and} \\
	p_{1}&\ge \frac{1}{(n+1)!}\intt_{\BB}\Big|\sum_{k:\ \hat{y}_{k}\in \AA} \eps(y,k) \phi_{n+1}(y_{k})\det(K_n(\hat{y}_{k})) \Big|^{2}d\mu^{\otimes(n+1)}(y).
	\ea
	where the second line follows from Claim~\ref{claim:detinequalitygeneral}. Now use Cauchy-Schwarz inequality to write
	\ba
	\sqrt{p_{1}}\sqrt{p_{2}} &\ge \frac{1}{(n+1)!} \intt_{\BB} \overline{\det(K_{n+1}(y))} \sum_{k:\ \hat{y}_{k}\in \AA}\eps(y,k) \phi_{n+1}(y_{k})\det(K_n(\hat{y}_k))d\mu^{\otimes(n+1)}(y).
	\ea
	Choose $\eps(y,k)$ so that $\eps(y,k)\det(K_{n+1}(y))=\det(K_n)(\hat{y}_{k}|y_{k})$ (in simpler words, $\eps(y,k)=(-1)^{n-k+1}$).
	
	Now fix $x\in \AA$ and $t\in E$. Since $\AA\le \BB$, for each $k$ there is a unique $y\in \BB$ such that $\hat{y}_{k}=x$ and $y_{k}=t$.  Then, each $k$ contributes the same, and we get
	\ba
	\sqrt{p_{1}}\sqrt{p_{2}} &\ge \frac{1}{n!}\intt_{\AA} \det(K_n(x)) \intt_{E}\phi_{n+1}(t) \overline{\det(K_{n+1})}(x,t)d\mu(t)d\mu^{\otimes n}(x).
	\ea
	The inner integral is equal to $\overline{\det K_n }(x)$, by Claim~\ref{claim:detequalitygeneral}. Thus we arrive at $\sqrt{p_{1}}\sqrt{p_{2}}\ge p_{1}$, which proves that $p_{2}\ge p_{1}$.
\end{proof}

\chapter{Another result on Stochastic domination}
Before we go to our next result, we must point out that the content of the Lyons' theorem (proved in the last chapter) is that `an orthogonal projection on bigger space gives larger determinantal measure'. Lyons' theorem allows us to compare two determinantal measures (obtained from finite rank projection kernels) whose kernels are expressed with respect to the same measure. Now suppose that we have two determinantal probability measures coming from orthogonal projections of the $\mbox{span}\{1,x,\ldots,x^{n-1}\}$, but with respect to two different reference measures. That is, let $\mathcal{H}_1=\mbox{span}\{1,x,\ldots,x^{n-1}\}\subseteq L^2(\mu_1)$ and $\mathcal{H}_2=\mbox{span}\{1,x,\ldots,x^{n-1}\}\subseteq L^2(\mu_2).$ In this case, is there a reasonable way to compare the determinantal processes coming from orthogonal projections on $\mathcal{H}_1$ and $\mathcal{H}_2?$ We answer a variant of this question in the following section. 
\section{Another stochastic domination result}
As a prelude, we begin with the following proposition.
\begin{proposition}
	Let $\mu$ be a positive measure on $\mathbb{R},$ and let $f,g$ be two non-negative integrable functions on $\mathbb{R}$ such that $\int\limits_{\mathbb{R}}f=\int\limits_{\mathbb{R}}g=1,$ and $\frac{f}{g}$ is increasing. Then for any real $t$ we have
	$$\int\limits_{-\infty}^{t}fd\mu\le \int\limits_{-\infty}^{t}gd\mu.$$
\end{proposition}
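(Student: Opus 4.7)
The plan is a standard single-sign-change argument for the difference $h := f - g$. Note that $h \in L^{1}(\mu)$ and $\int_{\R} h\, d\mu = 0$ by the normalization of $f$ and $g$. The key structural fact is that, because $f/g$ is non-decreasing, $h$ has exactly one sign change: there exists $t_{0} \in [-\infty, +\infty]$ such that $h \le 0$ on $(-\infty, t_{0})$ and $h \ge 0$ on $(t_{0}, \infty)$, modulo $\mu$-null sets. On $\{g > 0\}$ this follows from the identity $h(x) = g(x)\bigl(\tfrac{f(x)}{g(x)} - 1\bigr)$ together with the observation that $\{f/g \le 1\}$ is a left ray (up to $\mu$-null sets) because $f/g$ is monotone.

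Given the sign pattern, I would split into two cases. For $t \le t_{0}$, since $h \le 0$ on $(-\infty, t]$,
\[
\int_{-\infty}^{t} h\, d\mu \le 0,
\]
which is the desired inequality. For $t > t_{0}$, I would use $\int_{\R} h\, d\mu = 0$ to rewrite
\[
\int_{-\infty}^{t} h\, d\mu \;=\; -\int_{t}^{\infty} h\, d\mu,
\]
and since $h \ge 0$ on $(t_{0}, \infty) \supseteq (t, \infty)$, the right side is non-positive. In either case $\int_{-\infty}^{t} f\, d\mu \le \int_{-\infty}^{t} g\, d\mu$.

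The only delicate point I anticipate is making the sign-change location $t_{0}$ rigorous when $g$ vanishes on a set of positive $\mu$-measure (so $f/g$ is not literally defined there). I would address this by taking $t_{0} := \inf\{x : f(x) > g(x)\}$, observing that on $\{g = 0\}$ one has $h = f \ge 0$ (which fits naturally to the right of $t_{0}$, consistent with interpreting the ratio as $+\infty$ there), and using the monotonicity hypothesis to conclude the left-ray structure of $\{h \le 0\}$ up to a $\mu$-null set. Once this measure-theoretic bookkeeping is done, the case split above finishes the proof.
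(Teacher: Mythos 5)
Your argument is correct and essentially matches the paper's proof: both hinge on the single crossing threshold where $f/g$ passes through $1$ (the paper's set $S=\{f/g\ge 1\}$ is your $\{h\ge 0\}$), and both pass to the complementary ray, using $\int f=\int g=1$, when the cutoff lies beyond that threshold. The paper states the result for a general totally ordered probability space in terms of the increasing density $h=f/g$ and runs a short contradiction argument with the complementary set, while you work directly with $h=f-g$ on $\mathbb{R}$, but the underlying sign-change structure is identical; the only small tidy-up in your writeup is that the boundary case $t=t_0$ should go to your second branch (via $\int_{-\infty}^{t_0}h\,d\mu=-\int_{t_0}^{\infty}h\,d\mu\le 0$), since $h(t_0)$ may be strictly positive.
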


It should be pointed out the above theorem is essentially a result about stochastic domination of two probability measures. It is standard in measure theory to induce positive measures $\mu_f$ from a positive functions $f$ by defining $d\mu_f=f\;d\mu$. The integral of the functions being $1$ ensures that we obtain a probability measure and the content of the above theorem can be written as $\mu_g\prec \mu_f$ if $\frac{f}{g}$ is increasing. 
We will prove the following above proposition in slightly general setting, that is, when $f, g$ are densities given on some totally ordered measure space. The above result can then be obtained as a corollary. Note that if $(E, \le)$ is a totally ordered set, we say that a function $f:E\to \R$ is increasing if $f(x)\le f(y)$ whenever $x\le y.$ Similarly, we say $\mathcal{A}\subset E$ is increasing if $y\in \mathcal{A}$ whenever $x\le y$ for some $x\in \mathcal{A}.$  

\begin{proposition}
	Let $(E,\mu, \le)$ be a totally ordered probability space (that is, $(E, \le)$ is a totally ordered set). Let $h:E\to \mathbb{R}$ be a probability density with respect to $\mu$ which is increasing. Then for any increasing subset $\AA$ of $E,$ we have
	$$\mu(\AA)\le \mu_h(\AA):=\int\limits_{\AA}hd\mu.$$
\end{proposition}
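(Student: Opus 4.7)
The plan is to reduce the claim $\mu(\AA) \le \mu_h(\AA)$ to a non-negativity statement obtained by doubling the variable of integration. The decisive structural observation, which uses that $\le$ is \emph{total}, is that an increasing set $\AA$ and its complement are order-separated: for any $x \in \AA$ and $y \in \AA^c$ one must have $y < x$, since otherwise $x \le y$ and then $y \in \AA$ by the very definition of an increasing set, a contradiction. Combined with the monotonicity of $h$, this yields $h(y) \le h(x)$ for every such pair $(x,y)$.

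Next I would rewrite the quantity $\mu_h(\AA) - \mu(\AA)$ using that $h$ has total mass $1$. Writing $\mu(\AA) = \mu(\AA)\bigl(\int_\AA h\,d\mu + \int_{\AA^c} h\,d\mu\bigr)$ and regrouping gives the algebraic identity
\[
\int_\AA h\, d\mu \;-\; \mu(\AA) \;=\; \mu(\AA^c)\int_\AA h\, d\mu \;-\; \mu(\AA)\int_{\AA^c} h\, d\mu.
\]
A direct Tonelli application then recasts the right-hand side as a single double integral:
\[
\mu(\AA^c)\int_\AA h\, d\mu \;-\; \mu(\AA)\int_{\AA^c} h\, d\mu \;=\; \int_\AA\!\int_{\AA^c} \bigl(h(x) - h(y)\bigr)\, d\mu(y)\, d\mu(x).
\]
By the order-separation observation, the integrand is non-negative on the entire region $\AA \times \AA^c$, so the right-hand side is $\ge 0$, and the proposition follows.

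There is really no substantive obstacle to this plan. The only things needing care are the measurability of $\AA$ (implicit in writing $\mu(\AA)$) and the applicability of Tonelli's theorem, which is granted because $h$ is non-negative and measurable on a probability (hence $\sigma$-finite) space. Note that the argument uses nothing about $\R$ beyond the totality of the order and the monotonicity of $h$, which is exactly the generality required to recover the preceding proposition on $\R$ as a special case.
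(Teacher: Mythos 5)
Your proof is correct, and it takes a genuinely different route from the paper's. The paper argues via the level set $S = \{x : h(x) \ge 1\}$: it notes that $S$ is increasing, that $\mu_h \ge \mu$ on subsets of $S$ and $\mu_h \le \mu$ on subsets of $S^c$, and then relies on the fact that in a totally ordered set any two increasing subsets are comparable under inclusion (this is what silently justifies the step ``$\AA^c \subseteq S^c$'' once $\AA \subseteq S$ has been ruled out) to reach a contradiction with total mass $1$. Your argument instead exploits the order-separation of $\AA$ from $\AA^c$ directly, rewrites $\mu_h(\AA) - \mu(\AA)$ as $\mu(\AA^c)\mu_h(\AA) - \mu(\AA)\mu_h(\AA^c)$, and reads off non-negativity from the pointwise inequality $h(x) \ge h(y)$ on $\AA \times \AA^c$ after Tonelli. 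This is essentially the Chebyshev-sum/Harris-inequality style argument that the paper itself alludes to in the remark immediately following the proposition (there phrased via $(h_1(x)-h_1(y))(h_2(x)-h_2(y)) \ge 0$ with $h_1 = \mathbf{1}_{\AA}$) but does not carry out. Both proofs are short; yours makes the use of totality more transparent, since it is exactly what forces $y < x$ for every $x \in \AA$, $y \in \AA^c$, whereas the paper's version isolates the comparison threshold $h = 1$ and avoids any two-variable integral, but leaves the comparability of increasing sets implicit.
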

\begin{proof}
Consider the set $S:=\{x\in E: h(x)\ge 1\}\subset E.$ Observe that $S$ is an increasing subset of $E.$ It is clear that if $\AA\subset S$ then $\mu(\AA)\le\mu_h(\AA)$ since $h\ge 1$ on $S.$ Similarly for any subset $B\subset S^c$ we have that $\mu_h(B)\le \mu(B).$ Suppose, for the sake of contradiction, that $\AA\subset E$ be an increasing set such that 
	\begin{equation}
	\label{eq1}
	\mu_h(\AA)<\mu(\AA).
	\end{equation}
Note that $\AA^c \subset S^c.$ Therefore, $\mu_h(\AA^c)\le \mu(\AA^c).$ Adding this to equation \eqref{eq1} we get
	$$1=\mu_h(\AA)+\mu_h(\AA^c)<\mu(A)+\mu(\AA^c)=1.$$
which is a contradiction. Therefore, for any increasing set $\AA$ in $E$ we must have $\mu(\AA)\le \mu_h(\AA).$
\end{proof}
\begin{remark}
One can obtain the above result directly from Harris inequality whose proof usually goes by observing that $(h_1(x)-h_1(y))(h_2(x)-h_2(y))\ge 0,$ for any increasing functions $h_1, h_2,$ and therefore so its integral (with respect to a $d\mu_1(x)d\mu_2(y)$). In particular, taking $h_1=\mathbf{1}_{\AA}$ and $h_2=\frac{f}{g}$ and the measure to be $gd\mu$ we get an alternate proof of the above result.
\end{remark}

Note that the notion of increasing sets are available in partially ordered sets as well. It would be nice to obtain a result in the same spirit on a partially ordered set. But probably it is too good to be true. We produce below a counter-example which shows that the above result does not hold for an arbitrary partially ordered set. 

Consider the set $X=\{a,b,c\}$ equipped with the partial order $a\le b, a\le c.$ Let $\mu$ be uniform measure on $X$, that is, $\mu(\{x\})=\frac{1}{3}$ for every $x\in X.$
Now let $f:X\to \mathbb{R}$ be defined by $f(a)=\frac{1}{3}, f(b)=\frac{1}{2}, f(c)=\frac{13}{6}.$ Clearly $f$ is an increasing function on $X$ and is a probability density with respect to $\mu.$ The set $\{b\}\subset X$ is an increasing set, but $\frac{1}{6}= \mu_f(\{b\})<\mu(\{b\})=\frac{1}{3}.$

A simple modification of the above example also shows that the above result does not extend to a partially ordered lattice as well. Yet, not everything is lost. Our next result shows that we can obtain a stochastic domination between $\mu_f$ and $\mu_g$ at least under some conditions, which suffices for our purposes. Before we state our next result, we recall that the partial order on $X^{\wedge n}=$ (or $X^n$) is given by component wise ordering.

\begin{theorem}[]
	\label{thm:Stochasticdominationnew}
	Let $X=\mathbb{N}$ or $\R_{+}$  and let $\mu$ be a Borel (finite) measure such that $d\mu(x+y)=f(y)d\mu(x)$ for some positive function $f.$ 
	Let $X^{\wedge n}:=\{x=(x_1<x_2<\ldots<x_n)\},$ and let $H: X^{\wedge n}\to \mathbb{R}$ be an increasing function and consider the probability measures $P_1$ and $P_2$ on $X^{\wedge n}$ given by 
	$$dP_1(x)=\Delta(x)^2\prod_{i=1}^{n}d\mu(x_i)$$
	and,
	$$dP_2(x)=\Delta(x)^2H(x)\prod_{i=1}^{n}d\mu(x_i);$$
	where $\Delta(x)=\prod\limits_{i<j}(x_i-x_j).$ Let $\AA\subseteq X^{\wedge n}$ be an increasing set. Then $P_1(\AA)\le P_2(\AA).$
\end{theorem}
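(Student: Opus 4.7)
The plan is to apply the FKG inequality on the lattice $X^{\wedge n}$, equipped with the componentwise partial order inherited from $X^n$. The first step is to verify that $X^{\wedge n}$ is a sublattice of $X^n$: if $x, y \in X^{\wedge n}$ then $x \vee y$ and $x \wedge y$ (coordinatewise max and min) are again strictly increasing, since $x_i < x_{i+1}$ and $y_i < y_{i+1}$ imply $\max(x_i, y_i) < \max(x_{i+1}, y_{i+1})$, and similarly for min.

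The key lemma to establish is the log-supermodularity of the Vandermonde square,
\[
\Delta(x \vee y)^2 \, \Delta(x \wedge y)^2 \;\ge\; \Delta(x)^2 \, \Delta(y)^2 \qquad \text{for all } x, y \in X^{\wedge n}.
\]
Taking logs and using $\log \Delta^2 = 2 \sum_{i < j} \log(x_j - x_i)$, this reduces to the pair-wise inequality: whenever $a \le b$ and $c \le d$,
\[
(b-a)(d-c) \;\le\; \bigl(\max(b,d) - \max(a,c)\bigr) \bigl(\min(b,d) - \min(a,c)\bigr).
\]
A short case analysis settles this: when the pairs are ``aligned'' (say $a \le c$ and $b \le d$), both sides coincide, while in the ``crossed'' case ($a \le c$ but $b > d$), a direct computation gives $\text{RHS} - \text{LHS} = (c-a)(b-d) \ge 0$.

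Since the reference product measure $\prod_i d\mu(x_i)$ is trivially log-supermodular, combining the above shows that the density of $P_1$ is log-supermodular on $X^{\wedge n}$. By the continuous form of the FKG inequality (Holley--Preston), any two increasing $P_1$-integrable functions are positively correlated under $P_1$. Applying this to $H$ and $\mathbf{1}_\AA$, both increasing by hypothesis, yields
\[
E_{P_1}[H \, \mathbf{1}_\AA] \;\ge\; E_{P_1}[H] \cdot E_{P_1}[\mathbf{1}_\AA].
\]
Since $P_2$ is precisely the tilting of $P_1$ by $H$, the left-hand side equals $E_{P_1}[H] \cdot P_2(\AA)$ and the right-hand side equals $E_{P_1}[H] \cdot P_1(\AA)$; dividing by $E_{P_1}[H] > 0$ gives $P_2(\AA) \ge P_1(\AA)$.

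The main obstacle is the pair-wise Vandermonde inequality above, but it is elementary once isolated; the rest is routine. I note that this route does not seem to use the translation hypothesis $d\mu(x+y) = f(y)\,d\mu(x)$, which suggests the author may have in mind a different and more specialized argument (for instance, an explicit shift coupling that exploits the exponential/geometric structure of $\mu$). Nevertheless, the FKG approach sketched here gives the stated conclusion, and in fact does so in somewhat greater generality than the hypotheses demand.
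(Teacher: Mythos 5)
Your FKG argument is correct and takes a genuinely different route from the paper. The paper proves the theorem by induction on $n$: writing $t=x_1$ and $r_i=x_{i+1}-x_1$, it uses the translation hypothesis $d\mu(x+y)=f(y)\,d\mu(x)$ to split the reference measure into a factor in $t$ and a factor in the gap vector $r$, so that $\Delta(x)^2=\Delta(r)^2\prod_i r_i^2$ allows the $n=1$ base case (the elementary Harris comparison on a totally ordered set) to be applied to $t$ and the induction hypothesis to be applied to $r$ conditional on $t$. Your proof dispenses with this decomposition entirely: once $\Delta^2$ is seen to be log-supermodular on the sublattice $X^{\wedge n}$ --- and your pairwise Vandermonde check is correct, with equality in the aligned case and slack $(c-a)(b-d)\ge 0$ in the crossed case --- FKG does all the work, and as you observe the translation hypothesis never enters. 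Your route therefore yields a strictly more general statement; what the paper's argument buys in exchange is that it needs nothing beyond the one-dimensional Harris inequality and so stays entirely elementary.

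One point you should tighten. The familiar continuous FKG/Holley inequality is stated for product lattices, but $X^{\wedge n}$ is not a product, only a sublattice of $X^n$. Nor can the problem be pushed to $X^n$ by symmetrizing $H$ and $\mathcal A$ and invoking product FKG there, because $\Delta^2$ fails to be log-supermodular on all of $X^n$: with $n=2$, $x=(0,2)$ and $y=(3,1)$ one gets $\Delta(x\vee y)^2\Delta(x\wedge y)^2 = 1 < 16 = \Delta(x)^2\Delta(y)^2$. What your argument really needs is FKG for a general (finite) distributive lattice --- Holley's inequality or the Ahlswede--Daykin four-functions theorem --- of which $X^{\wedge n}$ is an instance, being a sublattice of a finite product of chains. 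For $X=\mathbb{N}$ this applies after truncating to a finite box and passing to the limit; for $X=\R_+$ one discretizes and takes a weak limit, noting that the pairwise supermodular inequality passes to the continuum. With that reference in hand your proof is complete, and the greater generality you claim is genuine.
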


\begin{remark}
Note that in the statement of the theorem above the measure $d\mu$ and function $H$ are already suitably normalized. Also note that we can allow $H:X^n\to \mathbb{R}$ if $H$ is symmetric. We are dealing with $X^{\wedge n}$ instead of $X^n$ purely for the convenience, and with obvious modification one can write the above result in the alternate setting. 
\end{remark}
  
\begin{proof}[Proof of Theorem \ref{thm:Stochasticdominationnew}]
	We will prove the claim by induction on $n.$ For $n=1$ it follows from our previous result on Stochastic domination on totally ordered set. Assume the claim to be true for $n=m$ for some $m\ge 1,$ And let $n=m+1.$
	
	We first introduce some notations. Note that for $x\in X^{\wedge (m+1)}$ associate a vector $(t;r)\in X\times X^{\wedge m}$ given by $t=x_1, r_i:=x_{i+1}-x_1.$ We can then write $\Delta(x)^2=\Delta(r)^2\prod_{i=1}^{m}r_i^2.$ For future use we will also define $d\mu_1(s)=s^2d\mu(s)$ in order to simplify the notation in the proof.

	Also for an increasing set $\AA\subset X^{\wedge m+1}$ and $t\in X$ define $$\AA_t:=\{(d\in X^{\wedge m}: (t, t+d_1, \ldots, t+d_m) \in \AA\},$$ and observe that $\AA_t \subseteq \AA_s$ if $t\le s.$ 
	
	Now, observe that 
	\begin{align*}
	P_1(\AA) &:= \int_{\AA}\Delta(x)^2\prod_{i=1}^{m+1}d\mu(x_i)\\
	&=\int_{X}f(t)^md\mu(t)\int_{\AA_t}\Delta(r)^2\prod_{i=}^{m}d\mu_1(r_i)\\
	&= \int_{X}f(t)^mZ_1d\mu(t)\int_{\AA_t}\frac{\Delta(r)^2}{Z_1}\prod_{i=1}^{m}d\mu_1(r_i)
	\end{align*}
	where $Z_1:= \int_{X^{\wedge m}}\Delta(r)^2\prod_{i=1}^{m}d\mu_1(r_i).$ We note that $Z_1f(t)^md\mu(t)$ is a probability measure on $X,$ and $Z_1^{-1}\Delta(r)^2\prod_{i=1}^{m}d\mu_1(r_i)$ is a probability measure on $X^{\wedge m}.$
	
	Doing exactly the same for $P_2(\AA)$ we obtain that
	$$
	P_2(\AA)=\int_{X}f(t)^mZ_2(t)d\mu(t)\int_{\AA_t}\frac{H(t;r)}{Z_2(t)}\Delta(r)^2\prod_{i=1}^{m}d\mu_1(r_i)
	$$
	where $Z_2(t):=\int_{X^{\wedge m}}H(t;r)\Delta(r)^2\prod_{i=1}^{m}d\mu_1(r_i).$ (We are making a slight abuse of notation here we are using the same symbol $H$ for the function $\tilde{H}(t;r):=H(t,t+r_1, \ldots,t+r_m)$.) Observe that $Z_2(t)$ is increasing in $t$ and $f(t)^mZ_2(t)d\mu(t)$ is a probability measure on $X.$ 
	
	It follows therefore from our previous result ($n=1$ case) that the probability measure $dm_2(t):=f(t)^mZ_2(t)d\mu(t)$ stochastically dominates the probability measure $dm_1(t):=f(t)^md\mu(t).$ Therefore, we know that for any increasing function $F(t)$ we have that 
	\begin{equation}
	\int\limits_{X}F(t)dm_1(t)\le \int\limits_{X}F(t)dm_2(t).
	\label{induction_step}
	\end{equation}
	
	As $\AA_t$ is increasing in $t,$ we have $F(t):=\int\limits_{\AA_t}Z_1^{-1}\Delta(r)^2\prod_{i=1}^{m}d\mu_1(r_i)$ is increasing in $t,$ it follows from \eqref{induction_hypo} therefore that  
	\begin{equation}
	\int_{X}dm_1(t) \int\limits_{\AA_t}Z_1^{-1}\Delta(r)^2\prod_{i=1}^{m}d\mu_1(r_i)\le \int_{X}dm_2(t)\int\limits_{\AA_t}Z_1^{-1}\Delta(r)^2\prod_{i=1}^{m}d\mu_1(r_i).
	\label{second_last}
	\end{equation}
	
	We now observe that for a fixed $t,$ $\frac{H(t;r)}{Z_2(t)}$ is increasing in $r$ on $X^{\wedge m}.$ Therefore, it follows from induction hypothesis that for any increasing set $B\subseteq X^{\wedge m}$ we have that 
	$$\int\limits_{B}\Delta(r)^2\prod_{i=1}^{m}d\mu_1(r_i)\le
	\int\limits_{B}\Delta(r)^2\frac{H(t;r)}{Z_2(t)}\prod_{i=1}^{m}d\mu_1(r_i).$$
	(Note that the due to suitable normalization we have probability densities on both sides, which is crucial in order to apply induction.)
	In particular for $B=\AA_t,$ we get that 
	\begin{equation}
	F(t)=\int\limits_{\AA_t}\Delta(r)^2\prod_{i=1}^{m}d\mu_1(r_i)\le G(t) =:
	\int\limits_{\AA_t}\Delta(r)^2\frac{H(t;r)}{Z_2(t)}\prod_{i=1}^{m}d\mu_1(r_i).
	\label{induction_hypo}
	\end{equation}
	
	It follows from \eqref{induction_hypo} and the fact that $dm_2(t)$ is a positive measure that 
	
	\begin{equation}
	\int_{X}dm_2(t)\int\limits_{\AA_t}Z_1^{-1}\Delta(r)^2\prod_{i=1}^{m}d\mu_1(r_i)\le \int_{X}dm_2(t)\int\limits_{\AA_t}Z_2(t)^{-1}H(t;r)\Delta(r)^2\prod_{i=1}^{m}d\mu_1(r_i)
	\label{final}
	\end{equation}
	The proof follows from \eqref{second_last} and \eqref{final}.
\end{proof}

\chapter{Some applications of stochastic domination}
\normalsize
In this chapter we present some applications of the results proved in the last two chapters. The joint density of eigenvalues of many random matrix ensembles are known to be determinantal. Also, there are beautiful connections between many random matrix ensembles and last passage percolation. We will define a directed last passage percolation and mention a few results which connect the last passage time in a directed last passage percolation with largest eigenvalues of some random matrix ensembles. After elucidating this connection, we prove a result due to  R. Basu and S. Ganguly about the largest eigenvalues of Wishart ensemble, that is, we prove (Corollary 4.3, \cite{BG}) that $\lam^*(W_{n-k-1,n+k+1})\prec \lam^*(W_{n-k,n-k})$ for $0\le k\le n-2$. We prove an analogous result about the largest eigenvalues of Meixner ensemble, which in turn gives the stochastic domination between last passage time in directed last passage percolation with exponential weights.

\section{Random matrix ensembles and Last passage percolation}
We will introduce the directed last passage percolation (DLPP) on $\N^2.$ Consider a family of non-negative random variables $\{w(i,j):(i,j)\in \N^2\}$, called weights or passage times. And let $\Pi(m,n)$ be the set of all up-right paths $\pi$ in $\N^2$ from $(1,1)$ to $(m,n).$ Define the random variable 
\[G(m,n):=\max\limits_{\pi\in \Pi(m,n)}\summ_{(i,j)\in \pi}w(i,j).\]
This random variable $G(m,n)$ is called last passage time of $(m,n).$ The idea is that passing through a vertex $(i,j)$ takes some random amount of time which is given by the random variable $w(i,j).$ The reason for calling it a `last passage percolation' is that $G(m,n)$ is essentially the time taken to reach the point $(m,n)$ via the slowest path. The study of $G(m,n)$ naturally leads to the connections with Young tabluex, polynuclear growth model, tandem queues and totally asymmetric simple exclusion process (see \cite{DLPPP_RMT}, \cite{Johnsson}). We will not get into these details here. We are concerned only with the relation of $G(m,n)$ with various random matrix ensembles. We will be particularly concerned with  DLPP with i.i.d. exponential weight and i.i.d. geometric weight. The last passage time in these two cases `correspond' to the largest eigenvalues of \emph{Wishart ensemble} and \emph{Miexner ensemble} respectively. We define below the Wishart and Miexner ensemble and state the results which connect the last passage time to the largest eigenvalues of these ensemble.

\definition[Wishart ensemble]
If $A_{m,n}$ is a $m\times n$ matrix whose entries are independent standard complex Gaussian entries (i.e., the real and imaginary parts are i.i.d. $N(0,1/2)$), then the matrix $W_{m,n}=AA^*$ is called the complex Wishart matrix. 

Equivalently, the Wishart matrix $W_{m,n}$ also corresponds to the following measure on the space of Hermitian matrices $\mathcal{H}_m$:
$$P_{m,n}(A)dA=Z^{-1}(\det A)^{n-m}\exp(-\Tr(A))\mathbf{1}_{Y\ge 0}\; dY.$$
Where $Y\ge 0$ means that $Y$ is positive semidefinite matrix. Let $\lam^*(W_{m,n})$ denote the largest eigenvalue of $W_{m,n}$. We recall the well-known result (see section 3, equation (3.7) of \cite{DLPPP_RMT}) that the eigenvalues of $W_{m,n}$ for $m\le n$ have joint density given by
\begin{align*}
\frac{1}{Z_{m,n}}\prodd_{1\le j<k\le m}|\lam_j-\lam_k|^2 \; \; \prodd_{k=1}^m \lam_k^{n-m}e^{-\lam_k}.
\end{align*} 
We record here the following result which establishes the connection between DLPP and Wishart matrix.

\begin{proposition}[\cite{DLPPP_RMT}, Proposition 4.4]
For any $n\ge m\ge 1, t\ge 0$, the distribution for $G(m,n)$ with i.i.d. exponential weights with mean 1 is
\begin{equation}
\label{WishartDLPP}
\P(G(m,n)\le t)=Z_{m,n}^{-1}\intt_{[0,t]^m}\prodd_{1\le j<k\le m}|\lam_j-\lam_k|^2 \; \; \prodd_{k=1}^m \lam_k^{n-m}e^{-\lam_k}d\lam_k.
\end{equation}
\end{proposition}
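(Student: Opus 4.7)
The plan is to observe that the right-hand side equals $\P(\lambda^*(W_{m,n}) \le t)$, which is immediate from the Wishart joint eigenvalue density recalled earlier in the section (all eigenvalues of $W_{m,n}$ are nonnegative almost surely, so $\{\lambda^* \le t\}$ coincides with $\{\lambda_i \in [0,t] \text{ for all } i\}$). It therefore suffices to establish the distributional identity $G(m,n) \stackrel{d}{=} \lambda^*(W_{m,n})$. I would prove this in two stages: first a combinatorial identification via the Robinson--Schensted--Knuth (RSK) correspondence applied to a geometric model, then a weak limit passing from geometric to exponential weights.

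For the first stage, take the $m \times n$ weight matrix $W = (w(i,j))$ with i.i.d. geometric entries of parameter $1-q$, i.e. $\P(w(i,j) = k) = (1-q) q^k$. The RSK correspondence is a bijection between $m \times n$ matrices of nonnegative integers and pairs $(P, Q)$ of semistandard Young tableaux of a common shape $\mu = (\mu_1 \ge \cdots \ge \mu_m \ge 0)$, with $P$ entries in $\{1, \ldots, m\}$ and $Q$ entries in $\{1, \ldots, n\}$. Greene's theorem identifies $\mu_1$ with the up-right last passage time $G(m,n)$. Pushing the product geometric measure through RSK and applying the Cauchy identity for Schur polynomials, one finds that $\mu$ is distributed as a Schur measure; evaluating the relevant specializations explicitly, this Schur measure coincides with the Meixner ensemble, namely the law on $\{h_1 > \cdots > h_m \ge 0\}$ proportional to $\prod_{i<j}(h_i - h_j)^2 \prod_k \binom{n-m+h_k}{h_k} q^{h_k}$. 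Consequently, $G(m,n)$ with geometric weights has the same distribution as the largest particle of the Meixner ensemble.

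For the second stage, recall that if $X_q \sim \mathrm{Geom}(1-q)$ then $(1-q) X_q$ converges in distribution to $\mathrm{Exp}(1)$ as $q \to 1^-$. Applying this scaling simultaneously to every weight, the rescaled geometric LPP converges in distribution to the exponential $G(m,n)$. On the eigenvalue side, under the change of variables $h_k = \lambda_k/(1-q)$, the Meixner joint density converges pointwise to the Wishart density: the Vandermonde contributes $(1-q)^{m(m-1)}$, the binomial $\binom{n-m+h_k}{h_k}$ asymptotes to $\lambda_k^{n-m}/[(n-m)! (1-q)^{n-m}]$, and $q^{h_k} = (1 - (1-q))^{\lambda_k/(1-q)} \to e^{-\lambda_k}$. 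By Scheff\'e's lemma the convergence is in total variation, so the law of the top particle of the rescaled Meixner ensemble converges to that of $\lambda^*(W_{m,n})$, which combined with stage one gives the claim.

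The main obstacle is executing the Schur-function computation of the first stage cleanly: one must evaluate the specializations of Schur polynomials arising from the Cauchy identity, match them against the Vandermonde-weighted discrete measure defining the Meixner ensemble, and track normalizations so that the marginalization over $\mu_2, \ldots, \mu_m$ produces the correct Meixner partition function. The limiting step is more routine but requires verifying that the normalizing constants also match in the limit, so that the Meixner partition function, rescaled by the appropriate power of $1-q$, converges to $Z_{m,n}$; since this is forced by the fact that both sides are probability densities and the pointwise limit integrates to one, the matching is automatic once pointwise convergence is established.
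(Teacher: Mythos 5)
The paper does not give a proof of this proposition: it is imported with a citation from \cite{DLPPP_RMT}, so there is no in-paper argument to compare against, and your sketch has to stand on its own merits. On those terms it is correct in outline and follows the standard Johansson route: apply RSK to the $m\times n$ matrix of i.i.d.\ geometric weights, use Greene's theorem to identify the first row $\mu_1$ of the common shape with the last passage time, push the product geometric measure through RSK and the Cauchy identity to obtain a Schur measure, evaluate the principal specializations via the Weyl dimension formula $s_\mu(1^m)=\prod_{i<j}\frac{h_i-h_j}{j-i}$ (with $h_i=\mu_i+m-i$), which yields the Meixner weight $\prod_{i<j}(h_i-h_j)^2\prod_k\binom{h_k+n-m}{h_k}q^{h_k}$ quoted in the paper, and then scale $h=\lam/(1-q)$ and send $q\to 1^-$.

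Two technical cautions on the limiting stage. Scheff\'e's lemma does not literally apply as stated, because the Meixner measure is a density with respect to counting measure on $\Z_{\ge 0}^m$ while the Wishart density is with respect to Lebesgue measure on $\R_+^m$, so there is no common dominating measure. Either smear each atom of the rescaled Meixner measure uniformly over a cube of side $1-q$, after which Scheff\'e does apply, or argue directly that $\P\{\max_k(1-q)h_k\le t\}$ is a Riemann sum over the mesh-$(1-q)$ grid inside the compact box $[0,t]^m$ converging to the integral in \eqref{WishartDLPP}. Second, the sign of the exponent in your Vandermonde scaling is off: $\prod_{i<j}(h_i-h_j)^2=(1-q)^{-m(m-1)}\prod_{i<j}(\lam_i-\lam_j)^2$, so the factor is $(1-q)^{-m(m-1)}$, not $(1-q)^{m(m-1)}$. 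As you observe, these powers of $1-q$ are absorbed into the normalization, but the bookkeeping must be consistent to confirm that the limiting normalizing constant is the Wishart $Z_{m,n}$.

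The Schur-function computation you flag as the main obstacle is indeed where the work lies, but it is routine once the dimension formula is in hand. One could also bypass the two-stage argument entirely by working directly with exponential weights via a continuous analogue of RSK (or the tandem-queue interpretation), which lands on the Laguerre ensemble with no limiting step; your geometric-then-limit route is the most common in the literature, and has the incidental advantage of also producing the Meixner ensemble that the paper needs in the very next application.
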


It is obvious that $G(m,n)\le G(m,n+1)$ or $G(m,n)\le G(m+1,n)$ from the description of the $G(m,n).$ But it is not so obvious to compare the random variables $G(n,n)$ and $G(n-1,n+1).$ There is no natural way to couple these two random variables on $\N^2$. A result of the authors in \cite{BG} (see section 5) implies that there exists a coupling between $G(n,n)$ and $G(n-1,n+1)$ such that $G(n-1,n+1)\le G(n,n)$. Observe that the right hand side in the \eqref{WishartDLPP} gives the distribution of the largest eigenvalue of Wishart matrix (which is known to be determinantal). Therefore, this question can be translated in terms of the largest eigenvalues of $W_{n,n}$ and $W_{n-1,n+1}.$ Let $\lam^*(W_{p,q})$ denote the largest eigenvalue of $W_{p,q}$. The above problem is therefore equivalent to showing that $\lam^*(W_{n-1,n+1})\prec \lam^*(W_{n, n}).$ Indeed this is true, and the authors in \cite{BG}  prove (see Corollary 5.3) the following:

\begin{theorem} [R. Basu and S. Ganguly]: $\lam^*(W_{n-k-1,n+k+1})\prec \lam^*(W_{n-k,n-k})$ for $0\le k\le n-2$
\end{theorem}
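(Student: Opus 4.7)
The plan is to view $\lambda^{*}(W_{n-k-1,n+k+1})$ and $\lambda^{*}(W_{n-k,n-k})$ as the maxima of finite-rank projection determinantal point processes on $\mathbb{R}_{+}$ and then combine Lyons' theorem (Theorem~\ref{thm:stochdominationgeneral}) with the new Laguerre-weight comparison of Theorem~\ref{thm:Stochasticdominationnew}. Recall that for $m\le n$ the density proportional to $\prod_{i<j}|\lambda_{i}-\lambda_{j}|^{2}\prod_{i}\lambda_{i}^{n-m}e^{-\lambda_{i}}$ is the DPP whose kernel projects onto $\mathrm{span}\{1,\lambda,\ldots,\lambda^{m-1}\}$ orthonormalised in $L^{2}(\mu_{\alpha})$, with $d\mu_{\alpha}(\lambda)=\lambda^{\alpha}e^{-\lambda}d\lambda$ and $\alpha=n-m$. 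Since $\{\lambda^{*}>t\}$ is an increasing event for the underlying point process, stochastic domination of the eigenvalue maxima will follow from stochastic domination of the two DPPs.

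First I would exploit the gauge freedom $K_{f}(x,y)=f(x)K(x,y)\overline{f(y)}$ with $d\mu_{f}=|f|^{-2}d\mu$ to place different Laguerre ensembles on a common reference measure. Taking $f(\lambda)=\lambda$ transports the kernel of $W_{n-j-1,n+j+1}$, naturally the projection onto $\mathrm{span}\{1,\lambda,\ldots,\lambda^{n-j-2}\}$ in $L^{2}(\mu_{2j+2})$, to the projection onto $\mathrm{span}\{\lambda,\lambda^{2},\ldots,\lambda^{n-j-1}\}$ in $L^{2}(\mu_{2j})$. In that same $L^{2}(\mu_{2j})$, the kernel of $W_{n-j,n+j}$ is the projection onto $\mathrm{span}\{1,\lambda,\ldots,\lambda^{n-j-1}\}$, which strictly contains the previous span; extending the orthonormal basis and invoking Theorem~\ref{thm:stochdominationgeneral} yields the one-step comparison
\[
\lambda^{*}(W_{n-j-1,n+j+1})\prec\lambda^{*}(W_{n-j,n+j})\qquad\text{for every } 0\le j\le n-2 .
\]
Iterating for $j=k,k-1,\ldots,0$ gives the chain $\lambda^{*}(W_{n-k-1,n+k+1})\prec\lambda^{*}(W_{n-k,n+k})\prec\cdots\prec\lambda^{*}(W_{n-1,n+1})\prec\lambda^{*}(W_{n,n})$, which already handles the base case $k=0$ of the theorem.

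The main obstacle is bridging this chain to the target $\lambda^{*}(W_{n-k,n-k})$. A second direct application of Lyons (nesting $\mathrm{span}\{1,\ldots,\lambda^{n-k-1}\}\subset\mathrm{span}\{1,\ldots,\lambda^{n-1}\}$ in $L^{2}(\mu_{0})$) yields $\lambda^{*}(W_{n-k,n-k})\prec\lambda^{*}(W_{n,n})$, but this is the wrong direction for composition, so a separate comparison is needed. The natural candidate is to insert Theorem~\ref{thm:Stochasticdominationnew} between two Laguerre ensembles with the same number of particles but different parameters: the density ratio is $\prod_{i}\lambda_{i}^{\Delta\alpha}$, which is increasing in each coordinate, so the ensemble with the larger $\alpha$ dominates. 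The difficulty is that this direction is again opposite to what a bridge to $W_{n-k,n-k}$ (which has $\alpha=0$) would require, so the intermediate ensembles have to be chosen so that the Lyons step (adjusting the particle number) and the Theorem~\ref{thm:Stochasticdominationnew} step (shifting the Laguerre weight) line up in compatible directions; otherwise one needs a direct coupling, tailored to these two target ensembles, that does not pass through $W_{n,n}$. Orchestrating Lyons and the weight-shift theorem so that both point toward $\lambda^{*}(W_{n-k,n-k})$ rather than $\lambda^{*}(W_{n,n})$ is the heart of the argument and is the step I expect to be the hardest.
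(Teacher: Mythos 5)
Your first two paragraphs are, in substance, the paper's own proof: the paper likewise realizes the two adjacent ensembles as determinantal processes with nested finite-rank projection kernels over a common reference measure and then applies Lyons' theorem, using that $\{\lambda^*>t\}$ is an increasing event. The only cosmetic difference is how the common measure is reached: you gauge one factor of $\lambda$ at a time (via $f(\lambda)=\lambda$, moving from $\mu_{2j+2}$ to $\mu_{2j}$), whereas the paper runs Gram--Schmidt on $x^{n-1},x^{n-2},\ldots,1$ in decreasing order of degree in the single space $L^2(\mathbb{R}_+,e^{-x}\,dx)$, so that $\phi_\ell$ lies in $\mathrm{span}\{x^\ell,\ldots,x^{n-1}\}$, the whole factor $\prod_i\lambda_i^{2\ell}$ is absorbed into the Vandermonde at once, and every $W_{n-\ell,n+\ell}$ becomes the projection onto $\mathrm{span}\{\phi_\ell,\ldots,\phi_{n-1}\}$ with respect to the one measure $e^{-x}\,dx$. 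Your verification that the functions $x\,p_i(x)$ remain orthonormal in $L^2(\mu_{2j})$ and span $\mathrm{span}\{\lambda,\ldots,\lambda^{n-j-1}\}$, and your remark that only the projection (not the basis) matters when invoking Theorem \ref{thm:stochdominationgeneral}, are correct.

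The ``main obstacle'' of your last paragraph is not a gap in your argument but an artifact of a typo in the statement: the dominating ensemble should read $W_{n-k,n+k}$, not $W_{n-k,n-k}$. That is what the paper actually proves (it applies Theorem \ref{thm:determinantaldomination} to the eigenvalues of $W_{n-\ell,n+\ell}$ and $W_{n-\ell-1,n+\ell+1}$), it is what the cited Basu--Ganguly corollary asserts, and at $k=0$ it reduces to the comparison $\lambda^*(W_{n-1,n+1})\prec\lambda^*(W_{n,n})$ emphasized throughout. The literal statement with $W_{n-k,n-k}$ is false in general: for $k=n-2$ it would say that $\lambda^*(W_{1,2n-1})$, a sum of $2n-1$ i.i.d.\ squared moduli of standard complex Gaussians with mean $2n-1$, is stochastically dominated by $\lambda^*(W_{2,2})$, whose mean is at most $\mathbf{E}[\operatorname{tr}W_{2,2}]=4$; stochastic domination would force $2n-1\le 4$. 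So no bridge to the square Wishart ensemble is needed (or possible): the one-step domination $\lambda^*(W_{n-j-1,n+j+1})\prec\lambda^*(W_{n-j,n+j})$ that you established for every $0\le j\le n-2$ is precisely the theorem, and the iterated chain is not required.
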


Motivated by this result, we ask the same question about the last passage time in DLPP with i.i.d geometric weights. When the weights are i.i.d. geometric with parameter $q$, the distribution of last passage time $G(m,n)$ is given by the following proposition.

 \begin{proposition}[\cite{DLPPP_RMT}, Proposition 4.1]
 	For any $n\ge m\ge 1,$ the distribution for $G(m,n)$ with i.i.d. geometric weights with parameter $q$ is 
 	\begin{equation}
 	\label{MiexnerDLPP}
 	\P(G(m,n)\le t) = Z^{-1}\summ_{\stackrel{h\in \N^m:}{\max\{h_i\le t+m-1\}}}\prodd_{1\le i<j\le m}(h_i-h_j)^2\;\prodd_{i=1}^{m}{h_i+n-m\choose h_i}q^{h_i}
 	\end{equation}	
 \end{proposition}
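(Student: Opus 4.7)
My plan is to follow the standard Johansson--style route via the Robinson--Schensted--Knuth (RSK) correspondence. Recall that RSK provides a bijection between matrices $W = (w(i,j)) \in \N^{m \times n}$ and pairs $(P, Q)$ of semistandard Young tableaux of a common shape $\lambda$, with $P$ having entries in $\{1, \ldots, n\}$ and $Q$ in $\{1, \ldots, m\}$. The two properties I would exploit are $|\lambda| = \sum_{i,j} w(i,j)$ and, by Greene's theorem, $\lambda_1 = G(m,n)$. Consequently the number of matrices $W$ with RSK shape $\lambda$ equals $s_\lambda(1^m)\, s_\lambda(1^n)$, where $s_\lambda(1^k)$ denotes the Schur polynomial at $k$ ones (equivalently, the number of SSYT of shape $\lambda$ with entries in $\{1,\ldots,k\}$).

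\textbf{Converting the probability.} Since the $w(i,j) \sim \mathrm{Geom}(q)$ are i.i.d., the probability of any specific matrix $W$ equals $(1-q)^{mn} q^{|W|}$ with $|W| := \sum w(i,j)$. Grouping by RSK shape and using the two properties above gives
\[
\P(G(m,n) \le t) \;=\; (1-q)^{mn} \summ_{\lambda \,:\, \lambda_1 \le t,\ \ell(\lambda) \le m} q^{|\lambda|}\, s_\lambda(1^m)\, s_\lambda(1^n).
\]
It remains to match this against the stated sum over $h \in \N^m$.

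\textbf{Schur identities and change of variables.} For this I would invoke the Weyl dimension formula: for $k \ge \ell(\lambda)$,
\[
s_\lambda(1^k) \;=\; \prodd_{1 \le i < j \le k} \frac{\mu_i^{(k)} - \mu_j^{(k)}}{j - i}, \qquad \mu_i^{(k)} := \lambda_i + k - i,
\]
with $\lambda_i = 0$ for $i > \ell(\lambda)$. Setting $h_i := \lambda_i + m - i$ gives a bijection between the relevant partitions $\lambda$ and strictly decreasing tuples $h_1 > h_2 > \cdots > h_m \ge 0$ with $h_1 \le t + m - 1$. Under this substitution, $s_\lambda(1^m)$ becomes $\prodd_{i<j}(h_i - h_j)/(j-i)$ directly. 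For $s_\lambda(1^n)$ I would split the Weyl product into three blocks: the block $1 \le i < j \le m$ contributes the same factor a second time; the block $m < i < j \le n$ telescopes to $1$ (both $\mu_i$ and $\mu_j$ equal $n-i$ and $n-j$); and the mixed block $i \le m < j \le n$ evaluates to
\[
\prodd_{i=1}^m \prodd_{j=m+1}^n \frac{h_i + j - m}{j - i} \;=\; \prodd_{i=1}^m \binom{h_i + n - m}{h_i} \cdot C_{m,n},
\]
for an explicit $\lambda$-independent constant $C_{m,n}$. Writing $q^{|\lambda|} = q^{-\binom{m}{2}} \prodd_i q^{h_i}$ and absorbing all factors independent of $h$ (including an $m!$ from relaxing the strict ordering, since the summand is symmetric in $h$ and vanishes whenever two $h_i$ coincide) into the normalization $Z$ yields the claimed formula exactly.

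\textbf{Where the work is.} The RSK/Greene identification is the conceptual engine; everything downstream is bookkeeping. The genuinely delicate step is the evaluation of $s_\lambda(1^n)$ when $\ell(\lambda) \le m < n$: the Weyl product ranges over $\binom{n}{2}$ pairs while $\lambda$ carries only $m$ degrees of freedom, so one must carefully identify which blocks are $\lambda$-independent (and hence absorbable into $Z$) and which produce the binomial weight $\binom{h_i + n - m}{h_i}$. This is the step where I would spend the most effort double-checking indices, factorials and signs.
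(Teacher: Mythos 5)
Your proposal is correct, and the key computational steps check out: the bijection $h_i = \lambda_i + m - i$ sends $\{\lambda_1 \le t,\ \ell(\lambda)\le m\}$ to strictly decreasing $h$ with $h_1 \le t+m-1$; the shifted parts for $s_\lambda(1^n)$ are $\mu_i^{(n)} = h_i + n - m$ for $i \le m$ and $n-i$ for $i>m$, so the $i<j\le m$ block of the Weyl product reproduces $\prod_{i<j\le m}(h_i-h_j)/(j-i)$, the $m<i<j$ block is identically $1$, and the mixed block gives $\prod_{i=1}^m \binom{h_i+n-m}{h_i}$ up to an $h$-independent constant; and $q^{|\lambda|} = q^{-\binom{m}{2}}\prod q^{h_i}$ with the $m!$ from symmetrization all absorb into $Z$. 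Note, though, that the paper itself does not prove this proposition --- it is quoted verbatim from the cited reference and used as a black box --- so there is no internal proof to compare against; what you have reconstructed is the standard Johansson-style RSK/Schur-function derivation, which is exactly the argument one finds in that reference. Your remark about where the care is needed (the evaluation of $s_\lambda(1^n)$ when $\ell(\lambda) \le m < n$, isolating which of the $\binom{n}{2}$ Weyl factors depend on $h$) is precisely the right thing to flag; the rest is bookkeeping, as you say.
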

 
And, thankfully the measure $P_{m,n}$ on $\N^m$, called \emph{Meixner ensmeble}, given by
$$Z^{-1}\summ_{\stackrel{h\in \N^m:}{\max\{h_i\le t+m-1\}}}\prodd_{1\le i<j\le m}(h_i-h_j)^2\;\prodd_{i=1}^{m}{h_i+n-m\choose h_i}q^{h_i},$$
is also determinantal. We exploit this fact and use the results proved in the last chapter along with the Lyons' result on stochastic domination to show that $G(n-1,n+1)\prec G(n,n).$ Straseen's theorem therefore gives the coupling of $G(n,n)$ and $G(n-1,n+1)$ such that $G(n-1,n+1)\le G(n,n)$. 

\section{Application of stochastic domination in random matrix ensemble}
In this section we will give three applications of the Stochastic domination results proved in the previous chapter. 

\subsection*{Stochastic domination for eigenvalues of Wishart matrix}
 
It is clear from the discussion in the previous section that this corresponds to proving the stochastic domination between the last passage time $G(n-k-1,n+k+1)$ and $G(n-k,n-k)$ of directed last passage percolation with i.i.d. exponential weights. 

Observe that if $m_1\le m_2$ and $n_1\le n_2$ then $\lam^*(W_{m_1,n_1})\prec \lam^*(W_{m_2,n_2})$. Indeed, if the two matrices are coupled in the natural way so that $W_{m_1,n_1}$ is a sub-matrix of $W_{m_2,n_2}$, then we in fact have $\lam^*(W_{m_1,n_1})\le \lam^*(W_{m_2,n_2})$.  However, this method of proof does not give the comparison between largest eigenvalues of $W_{n, n}$ and $W_{n-1, n+1}$. Instead we prove the conjecture using the determinantal structure of the eigenvalue density of $W$ and the theorem of Lyons on stochastic domination of determinantal point processes proved in the previous chapter.

\parag{Proof of the Theorem about maximum eigenvalue of Wishart matrices}
Recall that the eigenvalues of $W_{m, n}$ for $m\le n$ have joint density given by
\begin{align*}
Z_{m,n}^{-1}\intt_{[0,t]^m}\prodd_{1\le j<k\le m}|\lam_j-\lam_k|^2 \; \; \prodd_{k=1}^m \lam_k^{n-m}e^{-\lam_k}d\lam_k.
\end{align*}
To be more precise, this is the density with respect to Lebesgue measure on $\R_+^m$ of the vector of eigenvalues of $W_{m, n}$ put in uniform random order. 

 Apply Gram-Schmidt procedure to $x^{n-1},x^{n-2},\ldots ,x^{0}$ in that order in  $L^2(\R_+,e^{-x}dx)$ to get $\phi_{n-1},\ldots ,\phi_{0}$. Note that these are not Laguerre polynomials. In fact, $\phi_{n-1}(x)=\frac{1}{\sqrt{(2n-2)!}}x^{n-1}$. More generally, $\phi_k$ is a linear combination of $x^k,\ldots ,x^{n-1}$. Let $c_k$ denote the coefficient of $x^k$ in $\phi_k$. Then,
\begin{align*}
\prodd_{1\le j<k\le n-\ell}(\lam_j-\lam_k) \; \; \prodd_{k=1}^{n-\ell} \lam_k^{2\ell} &= 
\det\l[\begin{array}{cccc}
\lam_1^{\ell} & \lam_1^{\ell+1} & \ldots & \lam_1^{n-1} \\
\vdots & \vdots & \vdots & \vdots  \\
\lam_{n-\ell}^{\ell} & \lam_{n-\ell}^{\ell+1} & \ldots & \lam_{n-\ell}^{n-1}
\end{array}
\r] \\
&=\frac{1}{\prodd_{j=\ell}^{n-1} c_{j}} \det\l[\begin{array}{cccc}
\phi_{\ell}(\lam_1) & \phi_{\ell+1}(\lam_1) & \ldots & \phi_{n-1}(\lam_1) \\
\vdots & \vdots & \vdots & \vdots  \\
\phi_{\ell}(\lam_{n-\ell})  & \phi_{\ell+1}(\lam_{n-\ell}) & \ldots & \phi_{n-1}(\lam_{n-\ell}) 
\end{array}
\r].
\end{align*}
Therefore, for $0\le \ell\le n-1$, the density of eigenvalues of $W_{n-\ell,n+\ell}$ (w.r.t. the measure $(e^{-x}dx)^{\otimes n-\ell}$ on $\R_+^{n-\ell}$) is proportional to 
\begin{align*}
& \det\l(\l[\begin{array}{cccc}
\phi_{\ell}(\lam_1) & \phi_{\ell+1}(\lam_1) & \ldots & \phi_{n-1}(\lam_1) \\
\vdots & \vdots & \vdots & \vdots  \\
\phi_{\ell}(\lam_p)  & \phi_{\ell+1}(\lam_p) & \ldots & \phi_{n-1}(\lam_{n-\ell}) 
\end{array}
\r]\l[\begin{array}{cccc}
\phi_{\ell}(\lam_1) & \phi_{\ell}(\lam_2) & \ldots &  \phi_{\ell}(\lam_{n-\ell})  \\
\vdots & \vdots & \vdots & \vdots  \\
\phi_{n-1}(\lam_1)& \phi_{n-1}(\lam_2) & \ldots & \phi_{n-1}(\lam_{n-\ell}) 
\end{array}
\r]\r) \\
&=\det\l[(K_{\ell}(\lam_i,\lam_j))_{i,j\le n-\ell} \r].
\end{align*}
with $K_{\ell}(x,y)=\phi_{\ell}(x)\phi_{\ell}(y)+\ldots +\phi_{n-1}(x)\phi_{n-1}(y)$. Using the orthonormality of $\phi_j$s, a simple calculation gives the normalization constant to be $1/(n-\ell)!$. Thus, the eigenvalues of $W_{n-\ell,n+\ell}$ form a determinantal process on $(\R_+,e^{-x}dx)$ with kernel $K_{\ell}$.
Now, let $\ell \le n-2$ and apply Theorem~\ref{thm:determinantaldomination} to the eigenvalues of $W_{n-\ell,n+\ell}$ and $W_{n-\ell-1,n+\ell+1}$ to see that the latter is stochastically dominated by the former. 

Now fix $t>0$ and consider the set $\AA$ of all measures $\theta$ on $\R_+$ such that $\theta([t,\infty))>0$. This is an increasing set of measures. If $\XX$ is the counting measure of eigenvalues of $W_{n-\ell,n+\ell}$ and $\YY$ is the counting measure of eigenvalues of $W_{n-\ell-1,n+\ell+1}$, then it follows by the definition of stochastic domination that $\P\{\XX\in \AA\}\ge \P\{\YY\in \AA\}$. But $\P\{\XX\in \AA\}$ is the same as $\P\{\lam^*(W_{n-\ell,n+\ell})\ge t\}$ and similarly for $\YY$. Thus, the desired stochastic domination of largest eigenvalues follows.

\subsection*{Stochastic domination for eigenvalues  of Jacobi ensemble}
Jacobi ensemble is a family of p.d.fs given by 
\begin{equation}
\label{jacobi}
C_{n,a,b,\beta}^{-1}\prod_{j=1}^{n}(1-x_j)^{a\beta/2}(1+x_j)^{b\beta/2}\prod_{1\le i<j\le n} |x_i-x_j|^{\beta}, \hspace{3mm} x_j\in [-1,1],
\end{equation}
where $C_{n,a,b,\beta}$ is the suitable normalizing constant. This family of joint-densities does arise naturally as the joint-density of eigenvalues of some random matrices at least when $\beta=1,2,4.$ For details we refer the reader to Chapter 3 of  \cite{Forrester}. We are only interested in the case $\beta=2.$ In this case, the above density arises as the joint density of eigenvalues as follows.

\begin{proposition}[proposition 3.6.1.,\cite{Forrester}]
The eigenvalues of $n\times n$ matrix 
$$J_{n_1,n_2,n}=\frac{AA^*}{AA^*+BB^*}$$ where $A, B$ are matrices of size $n\times n_1$ and $n\times n_2$ respectively with i.i.d. standard complex Gaussian entries has joint density given by
$$C_{n,n_1,n_2}^{-1}\prod_{j=1}^{n}(x_j)^{n_1-n}(1-x_j)^{n_2-n}\prod_{1\le i<j\le n} |x_i-x_j|^{2}, \hspace{3mm} x_j\in [0,1].$$
\end{proposition}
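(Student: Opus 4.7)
The plan is to derive the joint eigenvalue density of $J_{n_1,n_2,n}$ in three moves: first reduce from the Gaussian inputs $A,B$ to the independent Wishart pair $(W_1,W_2)=(AA^*,BB^*)$; then change matrix variables to factor out a free Wishart direction; finally, apply the Weyl integration formula to go from a unitarily invariant matrix density to the eigenvalue density.

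First I would use the standard Bartlett-type computation (integrating out the unitary part of the polar decomposition of $A$ and $B$) to record that, for $n_1,n_2\ge n$, the pair $(W_1,W_2)$ has joint density on positive-definite Hermitian pairs proportional to
\[
(\det W_1)^{n_1-n}(\det W_2)^{n_2-n}\exp\!\bigl(-\tr(W_1+W_2)\bigr).
\]
Since $J_{n_1,n_2,n}=W_1(W_1+W_2)^{-1}$ is conjugate to the Hermitian matrix $\tilde J := W^{-1/2}W_1 W^{-1/2}$, where $W:=W_1+W_2$, and the two share the same spectrum, it suffices to work with the Hermitian representative $\tilde J$.

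Next I would change variables $(W_1,W_2)\mapsto(\tilde J,W)$, inverted by $W_1 = W^{1/2}\tilde J W^{1/2}$ and $W_2 = W^{1/2}(I-\tilde J)W^{1/2}$. The key identity in Hermitian matrix calculus is $dW_1\,dW_2 = (\det W)^n\,d\tilde J\,dW$, since the congruence $M\mapsto W^{1/2}MW^{1/2}$ at fixed $W$ has Jacobian $(\det W)^n$ on the real $n^2$-dimensional space of Hermitian matrices. Combined with $\det W_1 = \det W\cdot\det \tilde J$, $\det W_2 = \det W\cdot\det(I-\tilde J)$, and $\tr(W_1+W_2)=\tr W$, the joint density factors as
\[
(\det \tilde J)^{n_1-n}\bigl(\det(I-\tilde J)\bigr)^{n_2-n}\cdot(\det W)^{n_1+n_2-n}e^{-\tr W}.
\]
Hence $\tilde J$ and $W$ are independent; integrating out $W$ over the positive-definite cone (a standard multivariate-gamma integral) leaves $\tilde J$ with density on $\{0<\tilde J<I\}$ proportional to $(\det \tilde J)^{n_1-n}(\det(I-\tilde J))^{n_2-n}$.

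Finally, because this density depends on $\tilde J$ only through its eigenvalues, it is $U(n)$-invariant, and the Weyl integration formula on Hermitian matrices produces the Vandermonde-squared Jacobian $\prod_{i<j}(x_i-x_j)^2$ for the eigenvalue map. This yields the claimed density
\[
C_{n,n_1,n_2}^{-1}\prod_{j=1}^{n} x_j^{n_1-n}(1-x_j)^{n_2-n}\prod_{1\le i<j\le n}|x_i-x_j|^2
\]
on $[0,1]^n$. The main obstacle is the middle step: one must fix a branch of the Hermitian square root, verify that $(W_1,W_2)\mapsto(\tilde J,W)$ is a diffeomorphism between the relevant open cones, and justify the $(\det W)^n$ Jacobian factor via matrix differential calculus. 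Once that classical piece is in place, the independence of $\tilde J$ and $W$, the integration over $W$, and the Weyl reduction are all routine.
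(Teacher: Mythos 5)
The paper does not actually prove this statement; it simply cites Proposition~3.6.1 of Forrester. Your derivation is the standard textbook route (and, up to notation, the one in Forrester): pass from $(A,B)$ to the independent Wishart pair $(W_1,W_2)$, change variables to the ``ratio'' $\tilde J = W^{-1/2}W_1W^{-1/2}$ and the ``sum'' $W=W_1+W_2$, observe independence and integrate out $W$, then apply the Weyl integration formula. The Jacobian claim you flag is correct: congruence $M\mapsto PMP^*$ on $n\times n$ Hermitian matrices (real dimension $n^2$) has Jacobian $|\det P|^{2n}$, so $P=W^{1/2}$ gives $(\det W)^n$, and the block-triangular structure of $(\tilde J,W)\mapsto(W_1,W)$ (with $W_2=W-W_1$ contributing a unit factor) makes $dW_1\,dW_2=(\det W)^n\,d\tilde J\,dW$ exactly as you state. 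So the argument is sound and complete modulo the routine verification that $(W_1,W_2)\mapsto(\tilde J,W)$ is a diffeomorphism between the cone $\{W_1>0,\,W_2>0\}$ and $\{0<\tilde J<I\}\times\{W>0\}$, which you already identify as the one piece of matrix calculus that needs to be spelled out.
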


\begin{proposition}[Stochastic domination for $\beta=2$ Jacobi ensemble] 
$$\lam^*(J_{n_1+1,n_2-1,n-1})\prec \lam^*(J_{n_1,n_2,n}).$$
\end{proposition}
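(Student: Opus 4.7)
My plan is to run essentially the same machine that was used for the Wishart case, after picking a reference measure that lets both ensembles be realized as determinantal processes with a common orthonormal system. Concretely, on $[0,1]$ I would take
\[d\mu(x) = x^{n_1-n}(1-x)^{n_2-n}\,dx.\]
With respect to $\mu^{\otimes n}$, the density of the eigenvalues of $J_{n_1,n_2,n}$ is proportional to $\prod_{i<j}(x_i-x_j)^2$, while with respect to $\mu^{\otimes (n-1)}$ the density of the eigenvalues of $J_{n_1+1,n_2-1,n-1}$ works out to $\prod_j x_j^{2}\prod_{i<j}(x_i-x_j)^2$, because the ratio of the $J_{n_1+1,n_2-1,n-1}$ weight to $\mu$ is exactly $x^2$ per coordinate.

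Next I would convert both densities into squares of Vandermonde-type determinants in a single orthonormal polynomial basis. The identity
\[\prod_{j=1}^{n-1} x_j \prod_{1\le i<j\le n-1}(x_j-x_i) = \det\bigl[x_i^{k}\bigr]_{1\le i\le n-1,\ 1\le k\le n-1}\]
expresses the $J_{n_1+1,n_2-1,n-1}$ density in terms of the linear span $\mathcal H_1 := \operatorname{span}\{x,x^2,\ldots,x^{n-1}\} \subset L^2(\mu)$, while the usual Vandermonde identity expresses the $J_{n_1,n_2,n}$ density in terms of $\mathcal H_2 := \operatorname{span}\{1,x,x^2,\ldots,x^{n-1}\} \subset L^2(\mu)$. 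Since $\mathcal H_1\subset \mathcal H_2$ with codimension $1$, I would apply Gram--Schmidt to the ordered list $x,x^2,\ldots,x^{n-1},1$ in $L^2(\mu)$, producing an orthonormal sequence $\psi_1,\psi_2,\ldots,\psi_{n-1},\psi_0$ whose first $n-1$ terms form an orthonormal basis of $\mathcal H_1$ and all $n$ terms form an orthonormal basis of $\mathcal H_2$. Exactly as in the Wishart argument, row-reducing the two Vandermonde matrices and applying the Cauchy--Binet identity for the normalization shows that $J_{n_1+1,n_2-1,n-1}$ is determinantal on $([0,1],\mu)$ with kernel $K_1(x,y)=\sum_{k=1}^{n-1}\psi_k(x)\psi_k(y)$, and $J_{n_1,n_2,n}$ is determinantal on $([0,1],\mu)$ with kernel $K_2(x,y) = \psi_0(x)\psi_0(y)+K_1(x,y) = \sum_{k=0}^{n-1}\psi_k(x)\psi_k(y)$.

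With both processes presented as projections onto nested orthonormal sets, Theorem~\ref{thm:determinantaldomination} applies verbatim and gives that the eigenvalue process of $J_{n_1,n_2,n}$ stochastically dominates the eigenvalue process of $J_{n_1+1,n_2-1,n-1}$. As in the Wishart argument at the end of the previous section, I would then fix $t\in[0,1]$ and take the increasing set $\mathcal A = \{\theta\in\mathcal M([0,1]) : \theta([t,1])>0\}$; stochastic domination of the point processes on $\mathcal A$ is precisely the statement $\P\{\lambda^*(J_{n_1,n_2,n})\ge t\}\ge \P\{\lambda^*(J_{n_1+1,n_2-1,n-1})\ge t\}$, which is the claim. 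I expect the main obstacle to be purely bookkeeping: matching the constants produced by Gram--Schmidt and Cauchy--Binet with the explicit normalizing constant $C_{n,n_1,n_2}$ so that both density expressions indeed become $\frac1{n!}\det[K_2(x_i,x_j)]$ and $\frac1{(n-1)!}\det[K_1(x_i,x_j)]$; once this is done the stochastic domination itself is immediate from Lyons' theorem.
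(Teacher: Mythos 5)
Your proof is correct and follows essentially the same route as the paper: absorb the extra factor $\prod_j x_j^2$ into the Vandermonde by shifting the monomial range from $\{x^0,\dots,x^{n-1}\}$ to $\{x^1,\dots,x^{n-1}\}$, orthonormalize within the reference measure $x^{n_1-n}(1-x)^{n_2-n}dx$, obtain nested projection kernels, and apply Theorem~\ref{thm:determinantaldomination} together with the increasing event $\{\theta([t,1])>0\}$. The only cosmetic difference is the Gram--Schmidt ordering (you orthonormalize $x,x^2,\ldots,x^{n-1},1$, the paper does $x^{n-1},\ldots,x,1$), which yields a different orthonormal basis but the same nested-subspace structure and hence the same projection kernels.
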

\begin{proof}
The proof is verbatim same as in the case of Wishart ensemble, but we give the proof for completeness. Recall from the previous proposition that the joint density of the eigenvalues of $J_{n_1,n_2,m}$ is proportional to 
$$\prod_{j=1}^{n}(x_j)^{n_1-n}(1-x_j)^{n_2-n}\prod_{1\le i<j\le n} |x_i-x_j|^{\beta}, \hspace{3mm} x_j\in [0,1].$$

Let $\phi_{n-1},\ldots ,\phi_{0}$ be orthonormal vectors in  $L^2([0,1],x^{n_1-n}x^{n-2-n}dx)$ obtained by Gram-Schmidt procedure applied to $x^{n-1},x^{n-2},\ldots ,x^{0}$ in that order. Let $c_k$ denote the coefficient of $x^k$ in $\phi_k$. Then,
\begin{align*}
\prodd_{1\le j<k\le n}(\lam_j-\lam_k) \; \;  &= 
\det\l[\begin{array}{cccc}
\lam_1^{0} & \lam_1^{1} & \ldots & \lam_n^{n-1} \\
\vdots & \vdots & \vdots & \vdots  \\
\lam_{n}^{0} & \lam_{n}^{1} & \ldots & \lam_{n}^{n-1}
\end{array}
\r] \\
&=\frac{1}{\prodd_{j=1}^{n-1} c_{j}} \det\l[\begin{array}{cccc}
\phi_{0}(\lam_1) & \phi_{1}(\lam_1) & \ldots & \phi_{n-1}(\lam_1) \\
\vdots & \vdots & \vdots & \vdots  \\
\phi_{0}(\lam_{n-1})  & \phi_{1}(\lam_{n-1}) & \ldots & \phi_{n-1}(\lam_{n-1}) 
	\end{array}
	\r]
\end{align*}
Therefore, w.r.t. the measure $(x^{n_1-n}x^{n_2-n}dx)^{\otimes n}$ on $[0,1]$,  the density of eigenvalues of $J_{n_1,n_2,m}$ is proportional to $\det(K_{n}(\lam_i,\lam_j))_{1\le i,j\le n}$ where $K_{n}(x,y)=\sum\limits_{j=0}^{n-1}\phi_{j}(x)\phi_{j}(y)$. Therefore, the eigenvalues of $J_{n_1,n_2,n}$ form a determinantal process with the kernel $K_n(x,y)$ w.r.t. the measure $(x^{n_1-n}(1-x)^{n_2-n}dx).$ 
Exactly similar computation shows that the eigenvalues of $J_{n_1+1, n_2-1, n-1}$ form a determinantal process with the kernel $\sum_{j=1}^{n-1}\phi_j(x)\phi_j(y)$ with respect to the measure $(x^{n_1-n}(1-x)^{n_2-n}).$
 Invoking the theorem \ref{thm:determinantaldomination} gives and repeating exactly the same argument as in the last paragraph of the previous section, we obtain the desired result.  
\end{proof}

Of course a similar strategy also gives us that $\lam^*(J_{n_1-1,n_2+1,n-1})\prec \lam^*(J_{n_1,n_2,n}).$ We wish to point out here is that the general scheme here is to first show that eigenvalues of some ensemble is determinantal (which in both of the above cases is fairly well known). Then we go on to compute the kernels of these determinantal processes. The key step is to observe that it is possible to subsume some part of the measure into the kernel so that both kernels are expressed w.r.t. a common reference measure. After that its just a matter of checking the condition in the Lyons theorem, and invoke the Lyons theorem. In the next section we deal with Meixner ensemble and the reason why Lyons theorem is not directly applicable in that case is precisely that we are not able to represent the two kernels with respect to a common reference measure.

\subsection*{Stochastic domination for eigenvalues of Meixner ensemble}
Recall that Meixner ensemble $M(m,n), m\ge n$ is given by the following probability measure on $\N^m$ (for $0<q<1$,) 
$$Z_{m,n}^{-1}\prodd_{1\le i<j\le m}(h_i-h_j)^2\;\prodd_{i=1}^{m}{h_i+n-m\choose h_i}q^{h_i}.$$

The goal here is to compare the law of rightmost particles of $M(n,n)$ and $M(n-1,n+1).$ The joint density of the particles of $M(n,n)$ is give by 
$$Z_{n,n}^{-1}\prodd_{i=1}^{n}(h_i-h_j)^2\prodd_{i=1}^{n}q^{h_i}.$$
Arguing exactly as in the case of Wishart matrix, it can be shown that this is determinantal with the kernel $K_n(x,y)=\summ_{i=0}^{n-1}\phi_{i}(x)\phi_{i}(y)$ where $\phi_{n-1}, \ldots, \phi_0$ are the vectors obtained by orthonormalizing $x^{n-1}, \ldots, x, 1$ w.r.t. the probability measure with probability mass function proportional to $\mu(x)=q^x.$ Let us proceed as we did earlier and write the joint density of particles of $M(n+1,n-1)$ which is given by 
\[
Z_{n-1,n+1}^{-1}\prodd_{i=1}^{n-1}(h_i-h_j)^2\;\prodd_{i=1}^{n-1}(h_i+2)(h_i+1)q^{h_i}.
\]
Observe that we still have a determinantal process with kernel $K'(x,y)=\summ_{i=0}^{n-2}\psi_i(x)\psi_i(y)$ were $\psi_i$ are obtained by orthonormalizing $x^{n-2},\ldots, x, 1$ but with respect to the probability measure on $\N$ with probability mass function proportional to $(x+2)(x+1)q^x.$ In the earlier examples, we could get the kernel $K'$ by by orthonormalizing $x^{n-2},\ldots, x, 1$ w.r.t measure with p.m.f proportional to $x^2(x)d\mu(x).$ In that case it was possible to subsume this polynomial term $x^2$ into the determinant term, and thus express this kernel w.r.t. the original measure $\mu(x)$ so that $K'(x,y)=\summ_{i=1}^{n-1}\phi_{i}(x)\phi_{i}(y),$ and therefore theorem \ref{thm:determinantaldomination} could be used to compare the two processes. But in this case, theorem \ref{thm:determinantaldomination} is not  directly applicable. Nonetheless, it is true that $\lambda^*(M_{n+1,n-1})\prec \lambda^*(M_{n,n}),$ where $\lambda^*(M_{m, n})$ is the rightmost particle of $M_{m, n}.$ This is the content of the next proposition.

\begin{proposition}
$\lambda^*(M_{n+1,n-1})\prec \lambda^*(M_{n,n}).$	
\label{meixnerdomination}
\end{proposition}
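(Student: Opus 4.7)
The strategy is to interpolate between $M(n-1,n+1)$ and $M(n,n)$ via an auxiliary process $\check{M}$ on $\N^{\wedge(n-1)}$, chosen so that one comparison falls under Lyons' theorem (Theorem \ref{thm:determinantaldomination}) and the other under Theorem \ref{thm:Stochasticdominationnew}. Specifically, I take $\check{M}$ to be the probability measure on $\N^{\wedge(n-1)}$ with density proportional to $\Delta(h)^{2}\prod_{i=1}^{n-1}(h_{i}+1)^{2}\,q^{h_{i}}$. The key point is that replacing the awkward weight $(h_{i}+1)(h_{i}+2)$ appearing in $M(n-1,n+1)$ by the perfect square $(h_{i}+1)^{2}$ places $\check{M}$ in the same $L^{2}(q^{x})$ determinantal framework as $M(n,n)$.

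For the first comparison, I would exhibit $\check{M}$ as a DPP on $(\N,q^{x})$ whose kernel is a projection onto a subspace of the $M(n,n)$ kernel. Letting $\eta_{0},\ldots,\eta_{n-2}$ be the Gram--Schmidt orthonormalization of $1,x,\ldots,x^{n-2}$ in $L^{2}(\N,(x+1)^{2}q^{x})$, the functions $\xi_{i}(x):=(x+1)\eta_{i}(x)$ are orthonormal in $L^{2}(\N,q^{x})$, since $\int \xi_{i}\xi_{j}\,q^{x}=\int\eta_{i}\eta_{j}\,(x+1)^{2}q^{x}=\delta_{ij}$. A Cauchy--Binet-style computation (parallel to the Wishart argument earlier in this chapter) shows that $\check{M}$ is the DPP on $(\N,q^{x})$ with kernel $K_{\check{M}}(x,y)=\sum_{i=0}^{n-2}\xi_{i}(x)\xi_{i}(y)$, i.e., the projection onto $V=(x+1)\cdot\text{span}\{1,x,\ldots,x^{n-2}\}$. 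Each $\xi_{i}$ is a polynomial of degree $i+1\le n-1$, so $V\subset\text{span}\{1,x,\ldots,x^{n-1}\}$, which is precisely the rank-$n$ projection subspace associated with $M(n,n)$. Lyons' theorem then gives $\check{M}\prec M(n,n)$ as point processes, hence $\lambda^{*}(\check{M})\prec\lambda^{*}(M(n,n))$.

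For the second comparison, I would show $\lambda^{*}(M(n-1,n+1))\prec\lambda^{*}(\check{M})$ via the FKG content of Theorem \ref{thm:Stochasticdominationnew}. With respect to the translation-multiplicative reference $d\mu(x)=q^{x}$, both $M(n-1,n+1)$ and $\check{M}$ have densities of the form $\Delta^{2}H\prod d\mu$, and the Radon--Nikodym derivative $dM(n-1,n+1)/d\check{M}$ is proportional to the coordinate-wise \emph{decreasing} product-form function $g(h):=\prod_{i}\frac{h_{i}+2}{h_{i}+1}$. Now Theorem \ref{thm:Stochasticdominationnew}, applied to the base $P_{0}\propto\Delta^{2}\prod d\mu$ and an arbitrary increasing $H$, is precisely the FKG inequality $\E_{P_{0}}[fH]\ge\E_{P_{0}}[f]\E_{P_{0}}[H]$ for all increasing $f,H$; and tilting by the product-form weight $\prod(h_{i}+1)^{2}$ preserves the FKG lattice condition $P(x\vee y)P(x\wedge y)\ge P(x)P(y)$, so $\check{M}$ is itself FKG. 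Applying FKG for $\check{M}$ to the increasing indicator $\ind_{\lambda^{*}>t}$ and the increasing function $-g$ gives $\E_{\check{M}}[\ind_{\lambda^{*}>t}\cdot g]\le\E_{\check{M}}[\ind_{\lambda^{*}>t}]\cdot\E_{\check{M}}[g]$, which on expanding both sides (each of the three averages can be written explicitly in terms of the partition functions $Z_{\check{M}}$ and $Z_{M(n-1,n+1)}$) rearranges into $\P(\lambda^{*}(M(n-1,n+1))>t)\le\P(\lambda^{*}(\check{M})>t)$. Chaining with the first step yields $\lambda^{*}(M(n-1,n+1))\prec\lambda^{*}(\check{M})\prec\lambda^{*}(M(n,n))$, as required.

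The main obstacle, and the reason a single application of Lyons' theorem does not suffice as it did in the Wishart setting, is that $(h+1)(h+2)$ is not a perfect square in $h$ and so cannot be absorbed into a shifted polynomial basis the way $\lambda^{2\ell}$ was. The role of the intermediate $\check{M}$ is precisely to split the task into a ``perfect square'' piece, handled by Lyons, and a residual monotone product-form piece $\prod\frac{h_{i}+2}{h_{i}+1}$, handled by the FKG inequality that Theorem \ref{thm:Stochasticdominationnew} encodes; checking that this FKG content transfers from the base $P_{0}$ to the tilted measure $\check{M}$ is the one auxiliary observation that does not appear verbatim in the statement of Theorem \ref{thm:Stochasticdominationnew} but follows immediately from its equivalent formulation as an FKG inequality together with the fact that product-form tilts preserve log-supermodularity.
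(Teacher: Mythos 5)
Your overall strategy is the paper's strategy in mirror image: interpolate through a ``perfect--square'' intermediate ensemble, dispose of the square piece with Lyons' theorem, and handle the leftover product ratio with a monotone--tilt comparison. The paper takes the square $h^2$ (intermediate density $\propto \Delta(h)^2\prod h_i^2 q^{h_i}$, Lyons applied to $\mbox{span}\{x,\ldots,x^{n-1}\}\subset\mbox{span}\{1,x,\ldots,x^{n-1}\}$, residual ratio $\prod h_i^2/\bigl((h_i+1)(h_i+2)\bigr)$ increasing), whereas you take $(h+1)^2$; the two choices are interchangeable. Your first step is correct as written: the $\xi_i=(x+1)\eta_i$ are orthonormal in $L^2(\N,q^x)$, lie in $\mbox{span}\{1,x,\ldots,x^{n-1}\}$, and after completing them by one more unit vector to an orthonormal basis of that $n$-dimensional space, Theorem \ref{thm:determinantaldomination} applies and gives $\check{M}\prec M(n,n)$, hence domination of the rightmost particles via the increasing event ``at least one point in $(t,\infty)$''.

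The gap is in your justification of the second step. Theorem \ref{thm:Stochasticdominationnew} is indeed equivalent to positive association (the FKG \emph{inequality}) of the base measure $\Delta^2\prod q^{h_i}$, but it neither states nor implies the FKG \emph{lattice condition} $P(x\vee y)P(x\wedge y)\ge P(x)P(y)$ for that base. Product-form tilts preserve the lattice condition, but they do not in general preserve mere positive association; so the chain ``Theorem \ref{thm:Stochasticdominationnew} $\Rightarrow$ base is FKG $\Rightarrow$ the tilt $\check{M}$ is FKG'' breaks at the second arrow. What you actually need -- association of $\check{M}$, i.e.\ of a measure of the form $\Delta(h)^2\prod w(h_i)q^{h_i}$ on the chamber $h_1<\cdots<h_{n-1}$ -- is true, but it must be checked directly rather than inferred from the theorem: for $s\le s'$ and $t\le t'$ one has $(t-s)(t'-s')-(t'-s)(t-s')=(s'-s)(t'-t)\ge 0$, so $\log|t-s|$ is supermodular on the chamber, hence $\Delta^2$ times any product weight is log-supermodular, the Holley/FKG criterion applies, and then your computation with the decreasing $g=\prod(h_i+2)/(h_i+1)$ gives $\P(\lambda^*(M(n-1,n+1))>t)\le\P(\lambda^*(\check{M})>t)$ as claimed. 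For comparison, the paper phrases this step as a direct application of Theorem \ref{thm:Stochasticdominationnew} with base $M(n-1,n+1)$ and increasing tilt $\prod h_i^2/\bigl((h_i+1)(h_i+2)\bigr)$; note that there the reference weight $(x+1)(x+2)q^x$ also fails the hypothesis $d\mu(x+y)=f(y)d\mu(x)$, so the supplementary verification you correctly sensed was needed is needed in that formulation as well -- your instinct about the missing ingredient is right, but the particular derivation you offered for it is the part that has to be replaced by the explicit lattice-condition check (or by an argument extending Theorem \ref{thm:Stochasticdominationnew} to such reference weights).
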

The proof of the above proposition follows from the two claims which we will prove below. 
\begin{claim}
Let $\P_1$ be the joint law of $n$ particles on $\N$ given by  
\begin{align*}
Z_{n,n}^{-1}\prodd_{i=1}^{n}(h_i-h_j)^2\prodd_{i=1}^{n}q^{h_i}.
\end{align*}
Let $\P_2$ be the joint law of $n-1$ particles on $N$ given by 
$$Z^{-1}\prodd_{i=1}^{n-1}(h_i-h_j)^2\prodd_{i=1}^{n-1}h_i^2q^{h_i}.$$
Let $\X_1$ and $\X_2$ be the point processes obtained by the considering the unlabeled particles from $N$ chosen according to $\P_1$ and $\P_2$ respectively. Then, $\X_2\prec \X_1$. 
\end{claim}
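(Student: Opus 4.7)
The plan is to recognize that both $\P_{1}$ and $\P_{2}$ are determinantal point processes on the common measure space $(\N, q^{x})$, with kernels that are projections onto nested subspaces of $\ell^{2}(\N, q^{x})$. Once this is set up, Lyons' theorem (Theorem~\ref{thm:determinantaldomination}) applies verbatim and delivers $\mathcal{X}_{2}\prec \mathcal{X}_{1}$.

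The key maneuver is to absorb the extra weight $\prod_{i}h_{i}^{2}$ in $\P_{2}$ into the Vandermonde determinant, exactly as in the Wishart computation of the previous section with $\ell=1$. Apply Gram--Schmidt in the order $x^{n-1}, x^{n-2}, \ldots, 1$ to obtain an orthonormal family $\phi_{n-1}, \ldots, \phi_{0}$ in $\ell^{2}(\N, q^{x})$, so that $\phi_{k}\in \mathrm{span}\{x^{k},\ldots,x^{n-1}\}$ with a nonzero coefficient $c_{k}$ on $x^{k}$. The identity
\[
\prod_{1\le i<j\le n-1}(h_{i}-h_{j})\cdot\prod_{i=1}^{n-1}h_{i} \;=\; \det\bigl[h_{i}^{\,j}\bigr]_{i,j=1}^{n-1},
\]
combined with column operations replacing $x^{j}$ by $\phi_{j}$, gives
\[
\prod_{i<j}(h_{i}-h_{j})^{2}\prod_{i}h_{i}^{2} \;=\; \frac{1}{\bigl(\prod_{j=1}^{n-1}c_{j}\bigr)^{2}}\,\det\bigl[K_{2}(h_{i},h_{j})\bigr]_{i,j=1}^{n-1},
\]
where $K_{2}(x,y)=\sum_{k=1}^{n-1}\phi_{k}(x)\phi_{k}(y)$. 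A Cauchy--Binet computation identical to the proof of Lemma~\ref{finiteprojectiondpp} then shows that $\mathcal{X}_{2}$ is the determinantal point process on $(\N, q^{x})$ with kernel $K_{2}$. Performing the same manipulation on $\P_{1}$ (without the absorption step, i.e., Gram--Schmidt applied to the full Vandermonde with columns $1,x,\ldots,x^{n-1}$) identifies $\mathcal{X}_{1}$ as determinantal on the same measure space with kernel $K_{1}(x,y)=\sum_{k=0}^{n-1}\phi_{k}(x)\phi_{k}(y)$.

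With both processes realized on the common space $(\N, q^{x})$ and with kernels that are orthogonal projections onto subspaces differing by the single vector $\phi_{0}$, Theorem~\ref{thm:determinantaldomination} applies with the orthonormal family $\{\phi_{1},\ldots,\phi_{n-1}\}$ extended by $\phi_{0}$ to $\{\phi_{0},\phi_{1},\ldots,\phi_{n-1}\}$, yielding $\mathcal{X}_{2}\prec \mathcal{X}_{1}$. The only genuine obstacle in the argument is the absorption trick itself, and it is precisely this step that fails for the full Meixner weight $(h_{i}+1)(h_{i}+2)q^{h_{i}}$ of $M(n-1,n+1)$, which cannot be pulled inside a Vandermonde in the same way. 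That is presumably why the author isolates $\mathcal{X}_{2}$ as an intermediate object, to be compared separately with $M(n-1,n+1)$ via the more delicate Theorem~\ref{thm:Stochasticdominationnew}.
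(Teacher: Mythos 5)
Your proposal is correct and takes essentially the same approach as the paper: the paper simply asserts that the argument is ``verbatim the same as in the case of Wishart matrix'' (absorb the extra polynomial weight $\prod h_i^2$ into the Vandermonde via Gram--Schmidt, realize both processes as determinantal on $(\N, q^x)$ with kernels projecting onto $\mathrm{span}\{\phi_1,\ldots,\phi_{n-1}\}\subset\mathrm{span}\{\phi_0,\ldots,\phi_{n-1}\}$, and invoke Theorem~\ref{thm:determinantaldomination}), which is exactly what you spell out. Your closing remark about why the absorption trick fails for the genuine Meixner weight $(h_i+1)(h_i+2)$ also matches the paper's own motivation for splitting the proof into two claims.
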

The proof in this case is verbatim same as in the case of Wishart matrix and follows from the theorem \ref{thm:determinantaldomination}.
\begin{claim}
	Let $\P_2$ be the joint law of $n-1$ particles on $N$ given by 
	$$Z^{-1}\prodd_{i=1}^{n-1}(h_i-h_j)^2\prodd_{i=1}^{n-1}h_i^2q^{h_i}.$$
	Let $\P_3$ be the joint law of $n-1$ particles on $N$ given by 
	$$Z_{n+1,n-1}^{-1}\prodd_{i=1}^{n-1}(h_i-h_j)^2\prodd_{i=1}^{n-1}(h_i+2)(h_i+1)q^{h_i}.$$
	Let $X_2$ and $X_3$ be the point processes obtained by the considering the unlabeled particles from $N$ chosen according to $\P_2$ and $\P_3$ respectively. Then, $\X_3\prec \X_2$.
\end{claim}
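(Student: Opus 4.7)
The plan is to apply Theorem~\ref{thm:Stochasticdominationnew} to the pair $(\P_3,\P_2)$. Taking the base measure $d\mu(h) = (h+2)(h+1)q^h$ on $\N$ and the tilting function
\[ H(h_1,\dots,h_{n-1}) \;=\; \prod_{i=1}^{n-1}\frac{h_i^{\,2}}{(h_i+2)(h_i+1)}, \]
one has
\[ d\P_3(h) = Z_3^{-1}\,\Delta(h)^2\prod_i d\mu(h_i), \qquad d\P_2(h) = Z_2^{-1}\,\Delta(h)^2\,H(h)\prod_i d\mu(h_i), \]
which matches the form in the hypothesis of the theorem.

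The two things I would verify about $H$ are symmetry (automatic, since it is a product of copies of the same single-variable function applied to different coordinates) and coordinate-wise monotonicity. For the latter, set $\varphi(h) := h^2/((h+2)(h+1))$. A direct differentiation gives $\varphi'(h) = (3h^2+4h)/[(h+2)(h+1)]^2 > 0$ for $h>0$, and $\varphi(0)=0<\varphi(h)$ for $h\ge 1$, so $\varphi$ is strictly increasing on $\N$; hence $H$ is strictly increasing in each coordinate. With these checks, Theorem~\ref{thm:Stochasticdominationnew} delivers $\P_3(\AA)\le\P_2(\AA)$ for every increasing measurable $\AA\subseteq \N^{\wedge(n-1)}$, i.e., $\X_3\prec\X_2$.

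The main obstacle is that $d\mu(h) = (h+2)(h+1)q^h$ does not literally satisfy the shift condition $d\mu(x+y)=f(y)d\mu(x)$ in the hypothesis of Theorem~\ref{thm:Stochasticdominationnew} --- only the pure geometric factor $q^h$ does. I would argue that the proof of the theorem nonetheless applies: rather than performing the change of variables $r_i = h_{i+1}-h_1$ (which uses the shift property to factor $\prod d\mu(h_i)$ cleanly), one conditions on $t:=h_1$ directly and observes that the conditional law of $(h_2,\dots,h_{n-1})$ given $t$ still has the form Vandermonde-squared times a polynomial weight, with the modified base measure $d\mu^{(t)}(h) = (h-t)^2\,d\mu(h)$ on $\{h>t\}$ and the same tilting $H$ still symmetric and coordinate-wise increasing in the remaining variables. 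The inductive hypothesis at $n-2$ particles handles the inner integral, while the monotonicity of $H$ in $t$ combined with Proposition~\ref{StraTheo}-style one-dimensional arguments (as used for $n=1$ in the proof of Theorem~\ref{thm:Stochasticdominationnew}) handles the $t$-marginal comparison. This closes the induction and yields the stochastic domination $\X_3\prec\X_2$.
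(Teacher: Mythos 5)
Your first two paragraphs follow the paper's argument closely: the paper applies Theorem~\ref{thm:Stochasticdominationnew} with base measure $d\mu(h)=(h+2)(h+1)q^h$ and tilting $H(h)=\prod_i h_i^2/\bigl((h_i+2)(h_i+1)\bigr)$, and the only check it records is that $\varphi(h)=h^2/\bigl((h+1)(h+2)\bigr)$ is increasing on $\N$; your computation of $\varphi'$ is correct. You then go further than the paper and flag the real issue: the measure $(h+2)(h+1)q^h$ does not satisfy the shift hypothesis $d\mu(x+y)=f(y)\,d\mu(x)$ required by Theorem~\ref{thm:Stochasticdominationnew} --- only the bare geometric weight $q^h$ does. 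The paper does not address this at all, so your observation is a genuine improvement.

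The repair you sketch, however, does not close the gap. After conditioning on $h_1=t$, the per-coordinate conditional base measure is $d\mu^{(t)}(h)=(h-t)^2(h+2)(h+1)q^h$ on $\{h>t\}$, which again fails the shift hypothesis, so the ``inductive hypothesis at $n-2$ particles'' you invoke is not available unless one first formulates and proves a more general theorem over a class of base measures (say, nonnegative polynomial times geometric) that is stable under this conditioning. Even granting such a restatement, two monotonicity facts that the original proof of Theorem~\ref{thm:Stochasticdominationnew} obtains essentially for free no longer come automatically: (i) the conditional probability $F(t):=\P(\AA_t\mid h_1=t)$ under the $\P_3$-conditional law need not be increasing in $t$, since although $\AA_t$ grows with $t$, the conditional law itself shifts \emph{downward} as $t$ grows (for a log-concave polynomial factor such as $p(h)=(h+2)(h+1)$, the ratio $p(t_2+r)/p(t_1+r)$ is decreasing in $r$ when $t_2>t_1$); and (ii) the comparison of the $h_1$-marginals now requires $\frac{Z_2(t)\,t^2}{Z_3(t)\,(t+1)(t+2)}$ to be increasing, where $Z_2,Z_3$ are genuinely $t$-dependent partition functions, and this does not follow from ``$H$ increasing in $t$'' alone. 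So the induction does not close as described. It is worth noting that the paper's own proof of Theorem~\ref{thm:Stochasticdominationnew} meets the same obstruction --- the conditional base measure $d\mu_1(r)=r^2\,d\mu(r)$ used there also fails the shift hypothesis --- so the gap you found is upstream of the claim and affects the cited theorem itself.
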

\begin{proof}
Observe that $\frac{x^2}{(x+2)(x+1)}$ is an increasing function on $\N$. And, therefore the claim follows from the Theorem \ref{thm:Stochasticdominationnew}.
\end{proof}

\begin{proof}[Proof of Proposition \ref{meixnerdomination}]

Observe that $\P_3$ is the joint distribution of eigenvalue of $M(n+1,n-1)$ and $\P_1$ is the joint distribution of eigenvalues of $M(n,n).$ It follows from the last two claims that $X_3\prec X_1$ where $\X_3 (\text{or }\X_1)$ is the counting measure of the eigenvalues of $M_{n-1,n+1}(\text{ or }M_{n,n}).$ After this repeating exactly the same argument as in the Wishart's case give us the desired result.
\end{proof}
\newpage
\null

\addcontentsline{toc}{chapter}{References}
\renewcommand{\bibname}{References}
\bibliography{refs} 
\bibliographystyle{plain}  

\end{document}